
\documentclass[11pt]{amsart}
\usepackage{amsmath,amsthm,amscd,amsfonts,amssymb,epic,eepic,bbm, graphicx, youngtab,txfonts, ytableau}
\usepackage{tikz,color,graphicx,tikz-cd,xcolor,tkz-euclide}
\usepackage[draft]{hyperref}
\usepackage{enumerate}
\usepackage{array}
\usepackage{mathtools}

% ---------------------------------------------------------------- 
% PDFLaTeX / XeLaTeX with koTeX for Nanum Fonts 
% made by Heesung Shin, ensual@gmail.com
% ---------------------------------------------------------------- 
\usepackage{kotex-logo}

\usepackage[T1]{fontenc}
\PassOptionsToPackage{no-math}{fontspec}
\usepackage{iftex}

\ifPDFTeX
\usepackage[finemath]{kotex}
\usepackage{dhucs-nanumfont}
\fi

\ifXeTeX
\usepackage[unfonts]{kotex}
\xetexkofontregime{latin}[puncts=prevfont, colons=prevfont, cjksymbols=hangul]

\defaultfontfeatures+{
Ligatures          = TeX,
ItalicFont         = *,
ItalicFeatures     = {FakeSlant=.17}, 
BoldItalicFeatures = {FakeSlant=.17},
%SmallCapsFont      = {UnGungseo},
}

\setmainhangulfont{NanumMyeongjo}
\setsanshangulfont{NanumGothic}
%\setmonohangulfont{D2Coding}

%\setmainhanjafont{UnBatang}
%\setsanshanjafont{UnDotum}
\fi
% ----------------------------------------------------------------

\numberwithin{equation}{section}
\allowdisplaybreaks

\catcode`\@=11
\@namedef{subjclassname@2020}{%  
  \textup{2020} Mathematics Subject Classification}
\catcode`\@=12

\setlength{\topmargin}{0truein} \setlength{\headheight}{.25truein}
\setlength{\headsep}{.25truein} \setlength{\textheight}{8.5truein}
\setlength{\footskip}{.25truein} \setlength{\oddsidemargin}{0truein}
\setlength{\evensidemargin}{0truein}
\setlength{\textwidth}{6.5truein} \setlength{\voffset}{-0.5truein}
\setlength{\hoffset}{0truein}

% ----------------------------------------------------------------
\vfuzz2pt % Don't report over-full v-boxes if over-edge is small
\hfuzz2pt % Don't report over-full h-boxes if over-edge is small
% THEOREMS -------------------------------------------------------
\newtheorem{thm}{Theorem}[section]
\newtheorem{cor}[thm]{Corollary}

\newtheorem{lem}[thm]{Lemma}

\newtheorem{prop}[thm]{Proposition}

\theoremstyle{definition}

\theoremstyle{remark}

% -------------------------------------------------------------------------
% DO NOT DELETE THIS COMMENT!!! MACROS BELOW:

\DeclareMathOperator\prmx{fdes}
\DeclareMathOperator\rank{rank}
\DeclareMathOperator\vox{vox}
\DeclareMathOperator\height{ht}
\DeclareMathOperator\block{block}

\def\Z{\mathbb{Z}}

\newcommand\set[1]{\left\{#1\right\}}

\def\IS{\mathcal{IS}}
\def\UVD{\mathcal{UVD}}
\def\SP{\mathcal{SP}}
\newcommand\LFp[1]{\mathcal{LF}_{#1}}

\newcommand\LFa{\mathcal{A}}
\newcommand\LFb{\mathcal{B}}

\newcommand\wtd[1]{\widetilde{#1}}

% -------------------------------------------------------------------------

\title%[]%
{On 102-avoiding Inversion Sequences}

\author{JiSun Huh}
\address[JiSun Huh]{Department of Mathematics, University of Seoul, Seoul, Republic of Korea}
\email{hyunyjia@yonsei.ac.kr}
\thanks{}

\author{Sangwook Kim$^\dag$}
\address[Sangwook Kim]{Department of Mathematics, Chonnam National University, Gwangju, 61186, South Korea}
\email{swkim.math@chonnam.ac.kr}
\thanks{\dag Corresponding author}
\thanks{Sangwook Kim was partially supported by 
the National Research Foundation of Korea (NRF) grant funded by the Korea government (MSIT) (No.\ 2021R1F1A1062356).
}

\author{Seunghyun Seo}
\address[Seunghyun Seo]{Department of Mathematics Education, Kangwon National University, Chuncheon, 24341, South Korea}
\email{shyunseo@kangwon.ac.kr}
\thanks{}

\author{Heesung Shin}
\address[Heesung Shin]{Department of Mathematics, Inha University, 100 Inharo, Michuhol, Incheon, 22212, South Korea}
\email{shin@inha.ac.kr}
\thanks{Heesung Shin was supported by the National Research Foundation of Korea (NRF) grant funded by the Korea government (MSIT) (No. 2017R1C1B2008269).}

%\date{\today}

%----------------------------------------------------------------------------------------

\begin{document}

\begin{abstract}
  In this article, we provide a bijection between the set of inversion sequences avoiding the pattern 102 
  and the set of 2-Schr\"{o}der paths having neither peaks nor valleys and ending with a diagonal step.
  To achieve this, we introduce two intermediate objects, called UVD paths and labeled $F$-paths, 
  and establish bijections among all four families. For each of these combinatorial objects, 
  we define a natural statistic and enumerate the corresponding structures with respect to this statistic. 
  In addition, we study inversion sequences avoiding 102 and another pattern of length 3, 
  providing refined enumerations according to the same statistic.
\end{abstract}

\maketitle
    
%\todototoc
%\listoftodos

%\tableofcontents

%%%%%%%%%%%%%%%%
\section{Introduction}

An integer sequence \( e=(e_1, e_2, \dots, e_n) \) is called an \emph{inversion sequence of length \( n \)} if \( 0 \leq e_i <i \) for 
all \( i \in [n] \).
While pattern avoidance in permutations has been extensively studied (see Kitaev~\cite{Kitaev11} for the survey), 
the study of pattern avoidance in inversion sequences is a more recent development,
initiated independently by Corteel et al.~\cite{CMSW16}, and 
Mansour and Shattuck~\cite{MS15}.
They studied inversion sequences avoiding patterns of length $3$.
Following the initial work, 
Martinez and Savage~\cite{MartinezSavage18} reframed the notion of length-three 
pattern from a word of length $3$ to a triple of binary relations.
Yan and Lin~\cite{YanLin20} further investigated Wilf-equivalences among pairs of such patterns.
Recently, Testart~\cite{Testart24} completed the enumeration of inversion sequences avoiding one or two patterns of length \( 3 \). 
Meanwhile, Hong and Li~\cite{HongLi22} nearly completed the Wilf classification for patterns of length four.

Mansour and Shattuck~\cite{MS15} found that the generating function $A(x)$ for the number of $102$-avoiding inversion sequences satisfies functional equation
$$A(x) = 1+ \left( x-x^2 \right) A(x)^3.$$
Interestingly, Seo and Shin~\cite{SS23} recently showed that the generating function for the number of 
$2$-Schr\"{o}der paths 
having neither peaks nor valleys 
and ending with a diagonal step satisfies the same equation,
demonstrating that the two sets are equinumerous. They posed the open problem of finding a bijection between the two sets.
The number of inversion sequences avoiding $102$ and another pattern of length $3$ was studied by
several authors~\cite{CMSW16, KMY24, Testart24, YanLin20}.

In this paper, we resolve this problem by introducing two new objects:
UVD paths and labeled $F$-paths.
We construct bijections among the set of UVD paths, the set of labeled $F$-paths, and the set of $102$-avoiding inversion sequences.
As a consequence, we obtain a bijection between $102$-avoiding inversion sequences and
$2$-Schr\"{o}der paths having neither peaks nor valleys and ending with a diagonal step,
thus answering the question of Seo and Shin~\cite{SS23}.

We further define a statistic called \emph{rank} on $102$-avoiding inversion sequences extending the notion introduced for \( (102,101) \)-avoiding sequences in~\cite{HKSS24}. Analogous statistics are also defined
for UVD paths and labeled $F$-paths. 
We then enumerate \( 102 \)-avoiding inversion sequences with a fixed rank and study the doubly-avoiding cases 
where a second pattern \( \tau \in \{001, 011, 012, 021, 110, 120, 201, 210\} \) is also avoided.
The cases involving patterns \( 000, 010, 100 \) remain open.

The rest of the paper is organized as follows.
Section~\ref{sec:preliminaries} introduces definitions and background.
In Section~\ref{sec:bijections}, we construct bijections among $102$-avoiding inversion sequences, UVD paths, and labeled $F$-paths.
Section~\ref{sec:102-with-rank} provides refined enumerations based on the rank statistic, 
while Section~\ref{sec:102-tau-with-rank} explores the doubly-avoiding cases for various patterns \( \tau \).

%===============================================================
\section{Preliminaries}
\label{sec:preliminaries}

In this section, we provide definitions for the objects we discuss in this article.
Let $n$ be a positive integer.

%----------------------------------------------------------------
\subsection{Inversion sequences}
An integer sequence $e=(e_1, e_2, \dots, e_n)$ is called an \emph{inversion sequence of length $n$}
if $0 \leq e_j \leq j-1$ for all $j\in [n]:=\{ 1,2,\dots,n \}$.
We denote the largest 
integer in $e$ by $\max(e)$.
We also denote by \( \prmx(e) \) 
the position $p$ of the first descent, i.e.,
\begin{align*}
  e_1 \leq e_2\leq \dots \leq e_p > e_{p+1}
\end{align*}
with $e_{n+1} = -1$.

Pattern avoidance in inversion sequences can be defined in a similar way 
to that in permutations.
Given a word \( w \in \set{0, 1, \dots, k-1}^k\),
let the word obtained by replacing the $i$-th smallest entry in $w$ with $i-1$
be called the \emph{reduction} of $w$.
We say that an inversion sequence \( e=(e_1, e_2, \dots, e_n) \) 
\emph{contains} the pattern $w$
if there exist some indices $i_1 < i_2 < \dots < i_k$ 
such that the reduction of $e_{i_1} e_{i_2} \dots e_{i_k}$ is $w$.
Otherwise, $e$ is said to \emph{avoid} the pattern $w$.
Let $\IS_n$ denote the set of inversion sequences of length $n$ and 
$\IS_n(w_1, \dots, w_r)$ denote the set of inversion sequences of length $n$ avoiding all the patterns $w_1, \dots, w_r$.

For 
$e=(e_1, e_2, \dots, e_n) \in \IS_n(102)$,
if $p \!= \prmx(e)$,
then 
$e_p = \max(e)$.
In this case, we define 
\begin{align*}
\rank(e) 
:= \prmx(e) - \max(e) - 1
= (p - 1) - e_p \geq 0.
\end{align*}

%------------------------------------------------------------------

\subsection{$2$-Schr\"{o}der paths and UVD paths}

A \emph{$2$-Schr\"{o}der path of semilength $n$} is a lattice path
from $(0, 0)$ to $(n, 2n)$ 
that does not go below the line $y=2x$ and consists of 
north steps $N = (0, 1)$, east steps $E = (1, 0)$, 
and diagonal steps $H= (1, 1)$.

Let $\SP_n$ denote the set of $2$-Schr\"{o}der paths of semilength $n$ 
having neither peaks $NE$ nor valleys $EN$ 
and ending with a diagonal step $H = (1, 1)$.
Note that $\SP_n$ is the set $\mathcal{DP}_{n,2n}^{(2)}(NE, EN)$ introduced in \cite{SS23}.
A diagonal step that touches the line $y=2x$ is called a \emph{return} 
in a $2$-Schr\"{o}der path.
The number of returns of a $2$-Schr\"{o}der path $P$ is denoted by $\block(P)$.

Define a \emph{UVD path $S$ of semilength $n$} to be a lattice path from $(0,0)$ to $(2n,0)$ that stays weakly above the $x$-axis, consists of up steps $u=(1,1)$, down steps $d=(1,-1)$, and vertical steps $v=(0,-2)$, avoids the consecutive patterns $uv$ and $vu$, and ends with a $d$ step.
Note that a UVD path  of semilength $n$ can be expressed as a word 
$S=s_1 s_2 \ldots s_{2n+r}$	
consisting of $n+r$ up steps, $n-r$ down steps, and $r$ vertical steps, for some nonnegative integer $r$.

Let $\UVD_n$ denote the set of UVD paths of semilength $n$. By the linear transformation $M:\mathbb{R}^2\to\mathbb{R}^2$ induced by the matrix 
$M = \begin{psmallmatrix}
  0 & 1 \\ 
  -2 & 1
\end{psmallmatrix}$, a path $P$ in $\SP_n$ corresponds to a UVD path $S$ in $\UVD_n$, which 
is
denoted by $S=MP$. Therefore, we can regard $M$ as a map from $\SP_n$ to $\UVD_n$. 
In fact, $M$ is a bijection that sends 
$N$ to $u$, $E$ to $v$, and $H$ to $d$.

Given a UVD path $S$, a down step in $S$ that touches the $x$-axis is called a 
\emph{return}.
Let $\vox(S)$ denote the number of valleys $du$ on the $x$-axis of $S$,
and set $\vox(\emptyset):=-1$ by convention.
Then the number of returns of $S$ is clearly equal to $\vox(S)+1$. 
 
Therefore, if $S=MP$, then
$$
\vox(S)=\block(P)-1,
$$
i.e., $M$ is a bijection from $\SP_n$ to $\UVD_n$ with 
$\block(P)=\vox(MP)+1$.

%---------------------------------------------------------------------

\subsection{Labeled $F$-paths}

In \cite{HKSS24}, Huh et al. introduced \emph{F-paths} and constructed a bijection between 
$(102, 101)$-avoiding
inversion sequences and $F$-paths. 
An \emph{$F$-path of length $n$} is a lattice path in $\Z^2$
that starts at the origin and does not go below the line $y=x$
as a sequence of lattice points
$$(x_0, y_0), (x_1, y_1), \dots, (x_{\ell}, y_{\ell})$$
of which every step $(x_{j} - x_{j-1}, y_{j}-y_{j-1})$, denoted by $s_j$, 
is 
in the set
$$F:=\set{(a, 1): a \geq 0} \cup \set{(a, b): a\geq 1, b \leq 0},$$
for $j=1, 2, \ldots, n$.

Here, we extend the family of $F$-paths
by assigning a label to each step, in order to accommodate
the superset $\IS_n(102)$ of $\IS_n(102,101)$.
A \emph{labeled $F$-path} is an $F$-path 
where every 
step $(a, 1)$ is
assigned a label $(a; 1)$ and 
every other step $(a, b)$ with $b\leq 0$ is assigned a label 
$(a; b_1, \dots, b_k)$
for some nonpositive integers $b_1, \dots, b_k$ with $k \ge 1$ such that $b_1 + \cdots + b_k = b$.
We say that a step with a label $(a; b_1, \dots, b_k)$ has the \emph{semilength} $k$.
Define the semilength of a labeled $F$-path by the sum of the semilengths of its steps.
In Figure~\ref{fig:F},
the labeled $F$-path $Q$ has exactly $19$ steps, 
but the semilength of $Q$ is equal to $24$, 
since the semilengths of the last two steps of $Q$ are 3 and 4.
Let $\LFp{n}$ be the set of labeled $F$-paths of semilength $n$.

Given a labeled $F$-path 
$Q = (0, 0), (x_1, y_1), \dots, (x_\ell, y_\ell),$
define the \emph{height} of $Q$
by the value $y_\ell-x_\ell$ of the last lattice point $(x_\ell, y_\ell)$ of $Q$
and denote it by $\height(Q)$.

%----------------------------------------------------
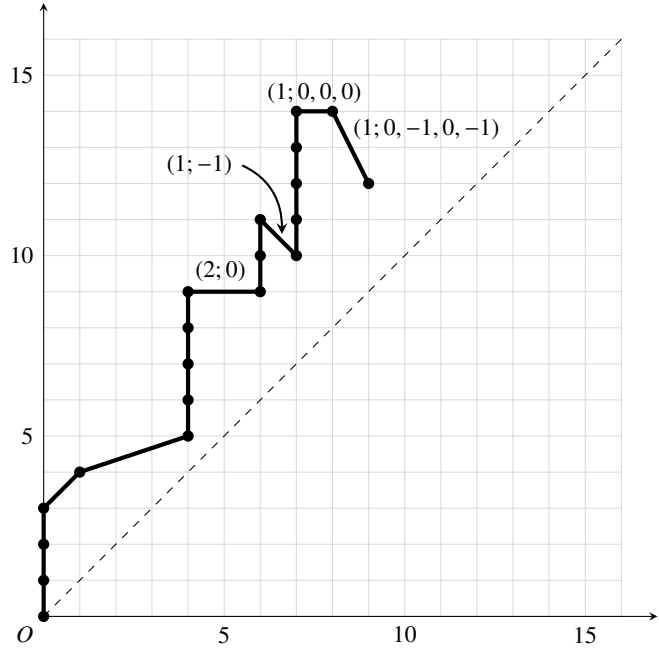
\begin{figure}[t]
\[
  \begin{tikzpicture}[scale=0.48]
    \pgfmathsetmacro{\n}{16}
    \pgfmathsetmacro{\m}{\n+1}
    \draw[gray!30, ultra thin] (0,0) grid (\n,\n);
    \draw [-stealth] (0,0) -- (\m,0);
    \draw [-stealth] (0,0) -- (0,\m);
    \draw [dashed] (0,0) -- (\n,\n);

    \coordinate (0) at (0,0);
    \coordinate (1) at (0,1);
    \coordinate (2) at (0,2);
    \coordinate (3) at (0,3);
    \coordinate (4) at (1,4);
    \coordinate (a) at (2.5,4.5);
    \coordinate (5) at (4,5);
    \coordinate (6) at (4,6);
    \coordinate (7) at (4,7);
    \coordinate (8) at (4,8);
    \coordinate (9) at (4,9);
    \coordinate (b) at (4.9,9);
    \coordinate (10) at (6,9);
    \coordinate (11) at (6,10);
    \coordinate (12) at (6,11);
    \coordinate (c1) at (5.5,12.5);
    \coordinate (c2) at (6.6,12);
    \coordinate (c3) at (6.6,11);
    \coordinate (c4) at (6.6,10.6);
    \coordinate (13) at (7,10);
    \coordinate (14) at (7,11);
    \coordinate (15) at (7,12);
    \coordinate (16) at (7,13);
    \coordinate (17) at (7,14);
    \coordinate (d) at (7.5,14.5);
    \coordinate (18) at (8,14);
    \coordinate (e) at (8.3,13.5);
    \coordinate (19) at (9,12);

    \foreach \i in {0,...,19}{
      \filldraw (\i) circle (4pt);
    }

    \foreach \i in {1,...,19}{
    \pgfmathsetmacro{\j}{\i-1}
    \draw[ultra thick] (\j) -- (\i);
    }

    \node at (0, 0) [below left] {\footnotesize $O$};
    \foreach \i in {5,10,...,17}{
      \node at (\i, 0) [below] {\footnotesize$\i$};
      \node at (0,\i) [left] {\footnotesize$\i$};
    }
    \node at (b) [above] {\footnotesize$(2;0)$};
    \node at (c1) [left] {\footnotesize$(1;-1)$};
    \draw [thick, -stealth] (c1) .. controls (c2) and (c3) .. (c4);
    \node at (d) [] {\footnotesize$(1;0,0,0)$};
    \node at (e) [right] {\footnotesize$(1;0, -1, 0, -1)$};
  \end{tikzpicture}
\]
\caption{
  A labeled $F$-path $Q$ of semilength $24$ and height $3$,
  with the labels $(a;1)$ are omitted for brevity.
}
\label{fig:F}
\end{figure}
%----------------------------------------------------

%==================================================
\section{Bijections}
\label{sec:bijections}

In this section, we give bijections among the sets $\mathcal{LF}_n$, 
$\IS_{n+1}(102)$, and $\mathcal{UVD}_{n+1}$.

%-----------------------------------------------------
\subsection{A bijection $\phi$ from $\mathcal{LF}_{n}$ to $\IS_{n+1}(102)$}
\label{sec:phi}

We construct a map 
from labeled $F$-paths to $102$-avoiding inversion sequences.
Define 
$$\phi : \bigcup_{n\geq 0} \mathcal{LF}_{n}
\to \bigcup_{n\geq 0} \IS_{n+1}(102)$$
recursively, with the property that $\height(Q) = \rank(\phi(Q))$ as follows:

For the initial case $n=0$, we define $\phi((x_0, y_0)) := (0)$, ensuring that $\height((x_0, y_0)) = \rank((0)) = 0$.
For the recursive step $n\geq 1$, 
consider a labeled $F$-path $Q = (x_0, y_0), (x_1, y_1), \dots, (x_\ell, y_\ell)$ in $\LFp{n}$ 
with $\ell \geq 1$. 
Let 
\[
\hat{Q}:=(x_0, y_0), (x_1, y_1), \dots, (x_{\ell-1}, y_{\ell-1})
\in \LFp{n-k},
\]
where $k$ is the semilength of the last step of $Q$ with $0 < k \leq n$.
We define
\[
\hat{e} := (\hat{e}_1, \hat{e}_2, \dots, \hat{e}_{n-k+1})
= \phi(\hat{Q})
\in \IS_{n-k+1}(102)
\]
so that $\height(\hat{Q}) = \rank(\hat{e})$ and $\hat{e}_{\hat{p}} = \max(\hat{e})$,
where $\hat{p}=\prmx(\hat{e})$.
Depending on the label of the last step $(a,b)$ of $Q$, we define $\phi(Q):=e$ differently.
Let $\max(\hat{e}) + a=m$.

\begin{enumerate}[(1)]
  \item If the label of the last step of $Q$ is $(a; 1)$ 
  with $a\geq 0$, then $b=k=1$ and 
  $\hat{e} = (\hat{e}_1, \hat{e}_2, \dots, \hat{e}_{n})$. 
  We define $e$ by inserting an entry $m$
  into the sequence $\hat{e}$ 
  after the entry $\hat{e}_{\hat{p}}$, that is,
  $$e := (\hat{e}_1, \hat{e}_2, \dots, \hat{e}_{\hat{p}}, m, \hat{e}_{\hat{p}+1}, \dots, \hat{e}_{n}).$$
  It is straightforward to verify that $e$ avoids the pattern $102$.
  Moreover, we have 
  \begin{align*}
  \max(e) &= \max(\hat{e}) + a = m,\\
  \prmx(e)&= \prmx(\hat{e}) + 1 = \hat{p}+1,\\ 
  \rank(e) &= \rank(\hat{e}) + (1-a) = \height(\hat{Q}) + (1-a) = \height(Q).
  \end{align*}

  \item If the label of the last step $(a,b)$ of $Q$ is $(a; b_1, b_2, \dots, b_k)$ 
  with $a\geq 1$ and $b_1, \dots, b_{k} \leq 0$ so that $b\leq 0$, 
  we define $e$  
  by inserting $k$ instances of $m$ into the sequence 
  $\hat{e} = (\hat{e}_1, \hat{e}_2, \dots, \hat{e}_{n-k+1})$, 
  after the entries $\hat{e}_{j_1}, \hat{e}_{j_2}, \ldots, \hat{e}_{j_k}$,
  where 
\[
  j_1 = \hat{p}+b_1 + b_2 + \dots + b_k - 1=\hat{p}+b-1
 \]
 and
 \[
  j_i = \hat{p} + b_i + b_{i+1} + \dots + b_k \quad \text{for $i =2, \dots, k$}.
  \]
  Since $j_1< j_2 \leq j_3 \leq \dots \leq j_k$ the sequence $e$ can be formally written as
  \[
  e := (\hat{e}_1, \hat{e}_2, \dots, 
  \hat{e}_{j_1}, m, \hat{e}_{j_1+1}, \dots, 
  \hat{e}_{j_2}, m, \hat{e}_{j_2+1}, \dots, 
  \hat{e}_{j_k}, m, \hat{e}_{j_k+1}, \dots, 
  \hat{e}_{n-k+1}).
  \]
  If the values of some $j_i$'s are the same, then $m$ is inserted multiple times at the same position after the entry $\hat{e}_{j_i}$.
  It is straightforward to verify that $e \in \IS_{n+1}(102)$
  with 
  \begin{align*}
  \max(e) &= \max(\hat{e}) + a = m,\\
  \prmx(e) &=  j_1 + 1= \prmx(\hat{e}) + b,\\ 
   \rank(e) &= \rank(\hat{e}) + (b -a) 
  = \height(\hat{Q}) + (b -a) = \height(Q).
  \end{align*}
\end{enumerate}
 Thus, $\phi(Q)$ can be defined by $e$ in $\IS_{n+1}(102)$ in both cases. 
 
Let us apply the map $\phi$
to the labeled $F$-path $Q$ of semilength $24$ in Figure~\ref{fig:F} 
as an example, 
in order to obtain a $102$-avoiding inversion sequence $\phi(Q)$.
Let $Q = (x_0, y_0), (x_1, y_1), \dots, (x_{19}, y_{19}) \in \mathcal{LF}_{24}$, and 
denote $Q^{(j)} = (x_0, y_0), (x_1, y_1), \dots, (x_j, y_j)$ for $j=0,\dots,19$.
It is clear that $Q^{(0)}=(x_0, y_0)$ 
and $Q^{(19)}$ represents the entire path $Q$.
Defining $e^{(j)} = \phi(Q^{(j)})$, 
we obtain the following inversion sequences for some values of $j$: 

\begin{align*}
% e^{(0)} &= 0 &\\
e^{(3)} &= 000\dot0, &
e^{(12)} &= 00001444466\dot64, \\
e^{(4)} &= 0000\dot1, &
e^{(13)} &= 0000144446\dot7664, \\
e^{(5)} &= 00001\dot4, &
e^{(17)} &= 00001444467777\dot7664, \\
e^{(9)} &= 000014444\dot4, &
e^{(18)} &= 00001444467777\dot8788664, \\
e^{(10)} &= 000014444\dot64, &
e^{(19)} &= 000014444677\dot9797998788664 = \phi(Q).
\end{align*}
Here, we write
$e_1 e_2 \dots e_{\ell}$ to denote the sequence $(e_1, e_2, \ldots, e_\ell)$, and
a dot placed above a number indicates the value of $\prmx(e^{(j)})$.

\begin{thm}
\label{thm:IS}
The map 
$$\phi : \bigcup_{n\geq 0} \LFp{n}
\to \bigcup_{n\geq 0} \IS_{n+1}(102)$$
is a bijection with $\height(Q) = \rank(\phi(Q))$.
\end{thm}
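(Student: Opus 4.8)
The plan is to establish the height--rank identity first and then bijectivity, both by induction on $n$ (equivalently, on the semilength), with bijectivity carried by an explicit inverse. The identity $\height(Q)=\rank(\phi(Q))$ is essentially free from the construction: the base case gives $0=0$, and in cases (1) and (2) the recorded increments $\rank(e)-\rank(\hat e)$ equal $1-a$ and $b-a$, which are exactly $\Delta y-\Delta x$ for the last step $(a,1)$, respectively $(a,b)$. Since $\height(Q)-\height(\hat Q)=\Delta y-\Delta x$ as well, induction closes the loop once one knows $\phi(Q)\in\IS_{n+1}(102)$ and the displayed formulas for $\max(e)$ and $\prmx(e)$, which are the routine verifications already indicated in the construction. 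Thus the real content is that $\phi$ is a bijection, and for this I would build the inverse $\psi\colon\IS_{n+1}(102)\to\LFp n$ recovering from $e$ both the last step of its preimage and the shorter sequence $\hat e$ obtained by deleting the entries inserted in that step.

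Given $e\in\IS_{n+1}(102)$ with $n\ge 1$ (the base case $e=(0)$ maps to the single point), set $M=\max(e)$ and $p=\prmx(e)$, so that $e_p=M$ and $p\ge 2$. The split is governed by $e_{p-1}$. If $e_{p-1}=M$, I declare the last step to be the case-(1) step labeled $(0;1)$ and take $\hat e$ to be $e$ with the single entry $e_p$ removed; indeed the forward formulas give $e_{p-1}=M-a$ in case (1) and $e_{p-1}=\hat e_{j_1}<M$ in case (2), so $e_{p-1}=M$ occurs only for case (1) with $a=0$, making this reversal forced. If instead $e_{p-1}<M$, then the last step had $a\ge 1$, so $\max(\hat e)=M-a<M$ and $\hat e$ contains no copy of $M$; hence \emph{every} occurrence of $M$ in $e$, say at positions $q_1<\cdots<q_k$ with $q_1=p$, belongs to the last block. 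I then delete all of them to form $\hat e$ and recover the label from $a=M-\max(\hat e)$, the multiplicity $k$, and the numbers $j_i=q_i-i$ together with $\hat p=\prmx(\hat e)$, reading off $(b_1,\dots,b_k)$ by inverting the relations $j_1=\hat p+b-1$ and $j_i=\hat p+(b_i+\cdots+b_k)$ for $i\ge 2$. When $k=1$ the two cases are separated by $\hat p=p-1$ (case (1), $a\ge 1$) versus $\hat p\ge p$ (case (2)).

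The crux, and the step I expect to be the main obstacle, is to verify that $\psi$ is well defined on all of $\IS_{n+1}(102)$: that $\hat e$ again lies in $\IS_{n-k+1}(102)$ and that the extracted data form a legal labeled $F$-step. That $\hat e$ is an inversion sequence is automatic, since every maximum sits at a position $\ge M+1$, so the number $r$ of deleted maxima preceding a retained entry $e_q=c$ obeys $r\le q-1-M$, whence $c\le M\le (q-r)-1$ and the entry, now at position $q-r$, still respects the inversion bound; and $102$-avoidance is inherited under deletion. For the label I must check all $b_i\le 0$ and $j_k\le\hat p$. Monotonicity $j_i\le j_{i+1}$ is immediate from $q_{i+1}\ge q_i+1$, and $b_1\le 0$ follows because the first descent forces $e_{p+1}<M$, so $q_2\ge q_1+2$. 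The inequality $j_k\le\prmx(\hat e)$ is where $102$-avoidance is essential: were there retained entries $e_s>e_t$ with $s<t<q_k$, the triple $(s,t,q_k)$ would realize the forbidden pattern $102$ because $e_t<e_s<M=e_{q_k}$; hence the retained entries before the last maximum are weakly increasing, $\hat e$ has no descent before position $j_k$, and $\prmx(\hat e)\ge j_k$. I would package these three observations as a structural lemma on the placement of maximal entries in $102$-avoiding inversion sequences.

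With $\psi$ shown well defined, I would finish by checking the two compositions. Each branch of $\psi$ reverses the matching branch of $\phi$ step by step -- via the position identities $q_i=j_i+i$ and $p=j_1+1$ in case (2), and the single insertion at $p=\hat p+1$ in case (1) -- so a straightforward induction on the semilength yields $\psi\circ\phi=\mathrm{id}$ and $\phi\circ\psi=\mathrm{id}$. Together with the height--rank identity from the first step, this shows that $\phi$ is a bijection satisfying $\height(Q)=\rank(\phi(Q))$.
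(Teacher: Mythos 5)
Your proposal is correct and follows essentially the same route as the paper: one inverts $\phi$ recursively by reading off the last step and its label from $e$, deleting either the single entry $e_p$ or all occurrences of $\max(e)$. Your case split (first on $e_{p-1}=M$ versus $e_{p-1}<M$, then on the multiplicity of $M$ and on $\hat p=p-1$ versus $\hat p\ge p$) is equivalent to the paper's split on $e_{p+1}<e_{p-1}$ versus $e_{p-1}\le e_{p+1}$ and produces the identical inverse, while your justifications that $\hat e$ remains an inversion sequence and that $j_k\le\prmx(\hat e)$ supply details the paper leaves implicit.
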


\begin{proof}
Note that for a given $Q=(x_0,y_0),(x_1,y_1),\dots, (x_{\ell},y_{\ell})$ in $\LFp{n}$,
it is straightforward to verify that \( x_\ell = \max(\phi(Q)) \)
and \( y_\ell = \prmx(\phi(Q))-1 \). 
To prove the bijectivity of $\phi$, it suffices to show that, given
a $102$-avoiding inversion sequence $e$, 
 one can uniquely reconstruct a labeled $F$-path $Q$ such that $e=\phi(Q)$, by determining
the sequence $\hat{e}=\phi(\hat{Q})$ and the last step
$(a,b)=(x_{\ell}-x_{\ell-1},y_{\ell}-y_{\ell-1})=(\max(e)-\max(\hat{e}), \prmx(e)-\prmx(\hat{e}))$
of $Q$, along with its label.

For $n\geq 1$, let $e$ be a $102$-avoiding inversion sequence in $\IS_{n+1}(102)$ with
$\prmx(e)=p$ and $\max(e)=m$. 
From the definition of $\prmx(e)$, it follows that $e_{p-1} \leq m$ and $e_{p+1} < m$, 
where we adopt the convention $e_{n+2} = -1$.
Hence, we consider the following two cases:

\begin{enumerate}
\item
If $e_{p+1} < e_{p-1} \leq m$, then 
we must have $e_{j} \leq e_{p-1} \leq m$ for all $j$ with $p+2 \leq j \leq n+1$;
otherwise, $e_{p-1} e_{p+1} e_j$ would form the pattern $102$.
Hence, the sequence 
$$\hat{e} = (e_1, e_2, \dots, e_{p-1}, e_{p+1}, \dots, e_{n+1})$$
belongs to $\IS_{n}(102)$ 
with $\max(\hat{e}) = e_{p-1}$ and $\prmx(\hat{e}) = \prmx(e)-1$.
Thus, we recover $(a,b)=(a, 1)$ with label $(a; 1)$, 
where $$a = \max(e) - \max(\hat{e})\geq 0.$$

\item If $e_{p-1} \leq e_{p+1} < m$,  
let the indices of the entries equal to $m$ in $e$
be denoted by $i_1, i_2, \dots, i_k$, satisfying 
\[
p = i_1 \leq i_2 - 2 \leq i_3 - 3 \leq \dots \leq i_k - k,
\]
for some $k\geq 1$.
By removing all the entries equal to $m$ from $e$, 
we obtain the sequence 
$$\hat{e} = (e_1, e_2, \dots, e_{i_1-1}, e_{i_1+1}, \dots, e_{i_2-1}, e_{i_2+1}, \dots \dots, e_{i_k-1}, e_{i_k+1}, \dots, e_{n+1})$$
which belongs to $\IS_{n-k+1}(102)$. Let $\prmx(\hat{e})=\hat{p}$.
Then we recover $(a,b)$ with label $(a;b_1, b_2, \dots, b_k)$, where 
$a=\max(e)-\max(\hat{e})\geq 1$ and $b=b_1+b_2+\cdots+b_k\leq 0$ as follows:
We consider two subcases:
\begin{itemize}
\item If $k=1$, let $b_1 = p - \hat{p}$. Since $e$ avoids the pattern $102$ and $e_{p-1} \leq e_{p+1} < m$, 
it follows that 
$\hat{p} \geq i_1 = p$ so that $ b=b_1\leq 0$. 
\item If $k\geq 2$, let 
\[b_j=
\begin{cases}
  i_1 - i_2 + 2 & \mbox{for $j=1$,} \\
  i_j - i_{j+1} + 1 & \mbox{for $j=2, \dots, k-1$,}\\
  i_k - k - \hat{p} & \mbox{for $j=k$.} 
\end{cases}
\]
Since $\hat{p} \geq i_k - k$, we have $b_i\leq 0$ for all $j$, so $b\leq 0$.
\end{itemize}
In both cases, we have 
\[
  \prmx(\hat{e}) = \hat{p} = p - (b_1 + b_2 + \dots + b_k) = \prmx(e) - b.
\]
\end{enumerate}
Thus, the labeled $F$-path $\phi^{-1}(e)$ can be recovered recursively
by attaching the step $(a, b)$ with its label to the end of $\phi^{-1}(\hat{e})$.
Therefore, $\phi$ is bijective.
\end{proof}

%------------------------------------------------
\subsection{A bijection $\psi$ from $\mathcal{LF}_{n}$ to $\mathcal{UVD}_{n+1}$}
\label{sec:psi}

We construct a map from labeled $F$-paths to UVD paths.
Define 
$$\psi : \bigcup_{n\geq 0} \LFp{n}
\to \bigcup_{n\geq 0} \UVD_{n+1}$$
recursively, with the property that $\height(Q) = \vox(\psi(Q))$ as follows:

For the initial case $n=0$, we define $\psi((x_0,y_0)) := ud$, ensuring that 
$\height((x_0,y_0)) = \vox(ud) = 0$.
For the recursive step $n\geq 1$, consider a labeled $F$-path 
$Q = (x_0, y_0), (x_1, y_1), \dots, (x_\ell, y_\ell)$ in $\LFp{n}$ 
with $\ell \geq 1$. Let
$$\hat{Q}:=(x_0, y_0), (x_1, y_1), \dots, (x_{\ell-1}, y_{\ell-1})
\in \LFp{n-k},$$
where $k$ is the semilength of the last step of $Q$
with $0 < k \leq n$.
We define
\[
\hat{S} 
:= \hat{s}_1 \hat{s}_2 \dots \hat{s}_{2(n-k+1)+\hat{r}}
= \psi(\hat{Q}) \in \UVD_{n-k+1}
\]
where $\hat{r}$ is the number of vertical steps in $\hat{S}$.
Thus, $\height(\hat{Q}) = \vox(\hat{S})$, which we denote by $\hat{h}$.
Recall that $\hat{S}$ has exactly $\hat{h}+1$ returns.
Depending on the label of the last step $(a,b)$ of $Q$, we define $\psi(Q):=S$ differently.

\begin{enumerate}[(1)]
  \item If the label of the last step of $Q$ is $(a; 1)$,
  then $k=1$.
  Let $p$ be the index of the $(\hat{h}+1-a)$-th return in 
  $\hat{S}=\hat{s}_1 \hat{s}_2 \dots \hat{s}_{2n+\hat{r}}.$
  We decompose $\hat{S}$ as
  \begin{align*}
  \alpha &= \hat{s}_1 \hat{s}_2 \dots \hat{s}_{p} \quad \text{ and } \quad 
  \beta = \hat{s}_{p+1} \dots \hat{s}_{2n+\hat{r}},
  \end{align*} 
  so that $\vox(\alpha) = \hat{h}-a$ and $\vox(\beta) = a-1$,
  where $\alpha$ or $\beta$ may be empty.
 Define 
 $$S := \alpha \, u \, \beta \, d = \hat{s}_1 \hat{s}_2 \dots \hat{s}_{p} u \hat{s}_{p+1} \dots \hat{s}_{2n+\hat{r}} \, d.$$
 See Figure~\ref{fig:psicase1} for an illustration of $\hat{S}$ and $S$. 
 Since $\hat{s}_{p}=d$ and $\hat{s}_{p+1}=u$,
 the path $S$ contains neither $uv$ nor $vu$ and hence
 belongs to $\UVD_{n+1}$.
 Moreover, the subpath $u \beta d$ has no valleys on the $x$-axis, so
  \begin{align*}
    \vox(S) = \vox(\hat{S}) + (1-a) = \height(\hat{Q}) + (1-a) = \height(Q).
  \end{align*}

  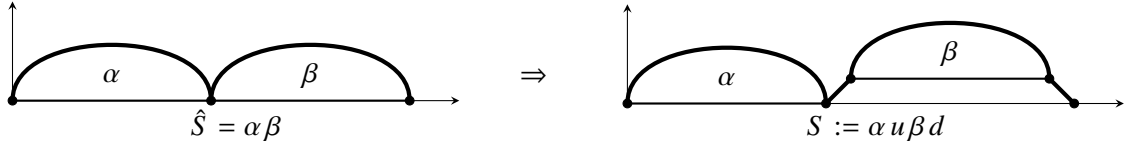
\begin{figure}
    %1----------------------------------------------------
    \begin{tikzpicture}[scale=0.33]
      \draw [-stealth] (0,0) -- (18,0);
      \draw [-stealth] (0,0) -- (0,4);

      \coordinate (0) at (0,0);
      \coordinate (1) at (8,0);
      \coordinate (2) at (16,0);

      \coordinate (a) at (0,3);
      \coordinate (b) at (8,3);
      \coordinate (c) at (16,3);

      \foreach \i in {0,...,2}{
      \filldraw (\i) circle (5pt);
      }

      \draw[ultra thick] (0) .. controls (a) and (b) .. (1);
      \draw[ultra thick] (1) .. controls (b) and (c) .. (2);
      \draw[thick] (0) -- (2);

      \node at (4, 1) {\( \alpha \)};
      \node at (12, 1) {\( \beta \)};

      \node at (9, -1) [] {\( \hat{S} = \alpha \, \beta \)};
      \node at (21,1) {$\Rightarrow$};
    \end{tikzpicture}
    \qquad 
  %2----------------------------------------------------
    \begin{tikzpicture}[scale=0.33]
      \draw [-stealth] (0,0) -- (20,0);
      \draw [-stealth] (0,0) -- (0,4);
  
      \coordinate (0) at (0,0);
      \coordinate (1) at (8,0);
      \coordinate (2) at (9,1);
      \coordinate (3) at (17,1);
      \coordinate (4) at (18,0);
  
      \coordinate (a) at (0,3);
      \coordinate (b) at (8,3);
      \coordinate (c) at (9,4);
      \coordinate (d) at (17,4);
  
      \foreach \i in {0,...,4}{
      \filldraw (\i) circle (5pt);
      }
  
      \draw[ultra thick] (0) .. controls (a) and (b) .. (1);
      \draw[ultra thick] (1) -- (2);
      \draw[ultra thick] (2) .. controls (c) and (d) .. (3);
      \draw[ultra thick] (3) -- (4);
      \draw[thick] (0) -- (1) -- (2) -- (3) -- (4);
  
      \node at (4, 1) {\( \alpha \)};
      \node at (13, 2) {\( \beta \)};
  
      \node at (10, -1) [] {\( S := \alpha\, u \, \beta \, d \)};
    \end{tikzpicture}
  \caption{
  The construction of $S$ from $\hat{S}$ in Case (1) of $\psi$. 
}
\label{fig:psicase1}
\end{figure}

  \item If the label of the last step $(a, b)$ of $Q$ is $(a; b_1, b_2, \dots, b_k)$
  for $a\geq 1$ and $b=b_1+b_2+\cdots+b_k \leq 0$, 
  then we define $j_i$ 
  by the index of $(\hat{h}+1+b_{i+1}+b_{i+2}+\dots+b_k-a)$-th return
  for $i=0, \dots, k$.
  (In detail, $j_k$ means the index of $(\hat{h}+1-a)$-th return of $\hat{S}$.)
  Let $p$ be the index satisfying that
  \[
  \hat{s}_p\neq v, \quad
  \hat{s}_{p+1} = \dots = \hat{s}_{j_0-1} = v,\quad
  \text{ and } \quad
  \hat{s}_{j_0} = d.
  \]
  Since
  \begin{align*}
  1 \leq p < j_0 \leq j_1 \leq \dots \leq j_k < 2(n-k+1)+\hat{r},
  \end{align*}
  the path $\hat{S}$ can be decomposed into 
  $k+3$
  paths based on the indices $p$, $j_0$, $\ldots$, $j_k$ as
  \begin{align*}
  \alpha &= \hat{s}_1 \hat{s}_2 \dots \hat{s}_{p}, \\
  \beta &= \hat{s}_{p+1} \dots \hat{s}_{j_0} (= v \dots v d), \\
  \sigma_i &= \hat{s}_{j_{i-1}+1} \dots \hat{s}_{j_i} \quad \text{for $i=1, \dots, k$},\\
  \tau &= \hat{s}_{j_k+1} \dots \hat{s}_{2(n-k+1)+\hat{r}},
  \end{align*} 
  with $\vox(\alpha\beta)= \hat{h} + (b - a)$, $\vox(\sigma_j) = -b_j-1$ for $j=1,\dots,k$ and $\vox(\tau) = a-1$.
  Note that $\alpha$, $\beta$, and $\tau$ are nonempty and $\sigma_i$ is empty if $j_{i-1} = j_{i}$.
  From $\hat{S}$, we construct a UVD path $S$ 
  by inserting $u\sigma_i u$'s in order after $\alpha$, 
  followed by $\tau$, $k$ instances of $v$, and $\beta$:
  \begin{align*}
  S := 
  & \alpha \; u \sigma_1 u\; u \sigma_2 u \dots u \sigma_k u \; \tau \; v^k \; \beta \\
  =&\hat{s}_1 \hat{s}_2 \dots \hat{s}_{p} \,
  u \hat{s}_{j_0+1} \dots \hat{s}_{j_1} u\, 
  u \hat{s}_{j_1+1} \dots \hat{s}_{j_2} u\, \dots \\
  &\dots 
  u \hat{s}_{j_{k-1}+1} \dots \hat{s}_{j_k} u\, 
  \hat{s}_{j_k+1} \dots \hat{s}_{2(n-k+1)+\hat{r}} \, 
  v \dots v \, 
  \hat{s}_{p+1} \dots \hat{s}_{j_0}.
  \end{align*}
  See Figure~\ref{fig:psicase2} for $\hat{S}$ and $S$. 
  It is easy to check that 
  the last lattice point of $S$ is $(2n+2, 0)$ and
  $S$ has $(\hat{r}+k)$ vertical steps. 
  Letting $r:=\hat{r} + k$,
  the number of all steps of $S$ is $2(n+1)+r$.
  It is obvious that $S$ has neither $uv$ nor $vu$,
  and $S$ is in $\mathcal{UVD}_{n+1}$
  with 
  \begin{align*}
  \vox(S) &= \vox(\hat{S}) + (b_1+b_2+\dots + b_k -a)
  = \height(\hat{Q}) + (b -a) = \height(Q).
  \end{align*}
\end{enumerate}
Thus, $\psi(Q)$ can be defined by $S$ in $\UVD_{n+1}$ in both cases.

 \begin{figure}  
   %1----------------------------------------------------
    \begin{tikzpicture}[scale=0.33]
      \draw [-stealth] (0,0) -- (30,0);
      \draw [-stealth] (0,0) -- (0,7);

      \coordinate (0) at (0,0);
      \coordinate (1) at (7,5);
      \coordinate (2) at (7,3);
      \coordinate (3) at (7,1);
      \coordinate (4) at (8,0);
      \coordinate (5) at (12,0);
      \coordinate (6) at (16,0);
      \coordinate (7) at (18,0);
      \coordinate (8) at (22,0);
      \coordinate (9) at (28,0);

      \coordinate (a) at (0,6);
      \coordinate (b) at (4,8);
      \coordinate (c) at (8,3);
      \coordinate (d) at (12,3);
      \coordinate (e) at (16,3);
      \coordinate (f) at (18,3);
      \coordinate (g) at (22,3);
      \coordinate (h) at (22,4);
      \coordinate (i) at (28,4);

      \draw[ultra thick] (0) .. controls (a) and (b) .. (1);
      \draw[ultra thick, color=red] (1) -- (2) -- (3) -- (4);
      \draw[ultra thick] (4) .. controls (c) and (d) .. (5);
      \draw[ultra thick] (5) .. controls (d) and (e) .. (6);
      \draw[ultra thick] (7) .. controls (f) and (g) .. (8);
      \draw[ultra thick] (8) .. controls (h) and (i) .. (9);
      \draw[thick] (0) -- (4) -- (5) -- (6) -- (7) -- (8) -- (9);

      \foreach \i in {0,...,9}{
      \filldraw (\i) circle (5pt);
      }

      \node at (4, 1) {\( \alpha \beta \)};
      \node at (10, 1) {\( \sigma_1 \)};
      \node at (14, 1) {\( \sigma_2 \)};
      \node at (17, 1) {\( \dots \)};
      \node at (20, 1) {\( \sigma_k \)};
      \node at (25, 1) {\( \tau \)};

      \node at (15, -1) [] 
      {\( 
      \hat{S} := \alpha \, \beta \, \sigma_1 \, \sigma_2 \dots \sigma_k \tau
      \)};
      \node at (15, -4) {$\Downarrow$};
    \end{tikzpicture}\\
  
   %2----------------------------------------------------
    \begin{tikzpicture}[scale=0.38]
      \draw [-stealth] (0,0) -- (36,0);
      \draw [-stealth] (0,0) -- (0,17);

      \coordinate (0) at (0,0);
      \coordinate (1) at (7,5);
      \coordinate (2) at (8,6);
      \coordinate (3) at (12,6);
      \coordinate (4) at (13,7);
      \coordinate (5) at (14,8);
      \coordinate (6) at (18,8);
      \coordinate (7) at (19,9);
      \coordinate (8) at (21,11);
      \coordinate (9) at (22,12);
      \coordinate (10) at (26,12);
      \coordinate (11) at (27,13);
      \coordinate (12) at (33,13);
      \coordinate (13) at (33,11);
      \coordinate (14) at (33,9);
      \coordinate (15) at (33,7);
      \coordinate (16) at (33,5);
      \coordinate (17) at (33,3);
      \coordinate (18) at (33,1);
      \coordinate (19) at (34,0);

      \coordinate (a) at (0,6);
      \coordinate (b) at (4,8);
      \coordinate (c) at (8,9);
      \coordinate (d) at (12,9);
      \coordinate (e) at (14,11);
      \coordinate (f) at (18,11);
      \coordinate (g) at (22,15);
      \coordinate (h) at (26,15);
      \coordinate (i) at (27,17);
      \coordinate (j) at (33,17);

      \coordinate (k) at (7,0);

      \draw[ultra thick] 
        (0) .. controls (a) and (b) .. (1)
        (1) -- (2) .. controls (c) and (d) .. (3) -- (4)
        (4) -- (5) .. controls (e) and (f) .. (6) -- (7)
        (8) -- (9) .. controls (g) and (h) .. (10) -- (11)
        (11) .. controls (i) and (j) .. (12)
        (12) -- (13) 
        (14) -- (15) -- (16) 
      ;
      \draw[ultra thick, color=red]  (16)-- (17) -- (18) -- (19);
      
      \foreach \i in {0,...,19}{
      \filldraw (\i) circle (5pt);
      }
      
      \draw[thick] 
        (0) -- (k) 
        (1) -- (2) -- (3) -- (5) -- (6)
        (9) -- (10) -- (11) -- (12)
      ;
      \draw[dotted, thick] 
        (7) -- (8)
        (13) -- (14)
      ;
      \draw[dashed] 
        (k) -- (1)
        (1) -- (16)
        (4) -- (15)
        (7) -- (14)
        (8) -- (13)
      ;

      \node at (4, 2) {\( \alpha \)};
      \node at (10, 7) {\( \sigma_1 \)};
      \node at (16, 9) {\( \sigma_2 \)};
      \node at (24, 13) {\( \sigma_k \)};
      \node at (30, 14) {\( \tau \)};
      \node at (18) [above left] {\( \beta \)};

      \foreach \i in {2,4,5,7,9,11}{
        \node at (\i) [below] {\( u \)};
      }
      
      \foreach \i in {13, 15, 16}{
        \node at (\i) [above left] {\( v \)};
      }

      \node at (18, -1) [] 
      {\( S := \alpha \; u \sigma_1 u\; u \sigma_2 u \dots u \sigma_k u \; \tau \; v^k \; \beta \)};
    \end{tikzpicture}
\caption{
  The construction of $S$ from $\hat{S}$ in Case (2) of $\psi$. The red steps indicate the steps in $\beta$.
}
\label{fig:psicase2}
\end{figure}
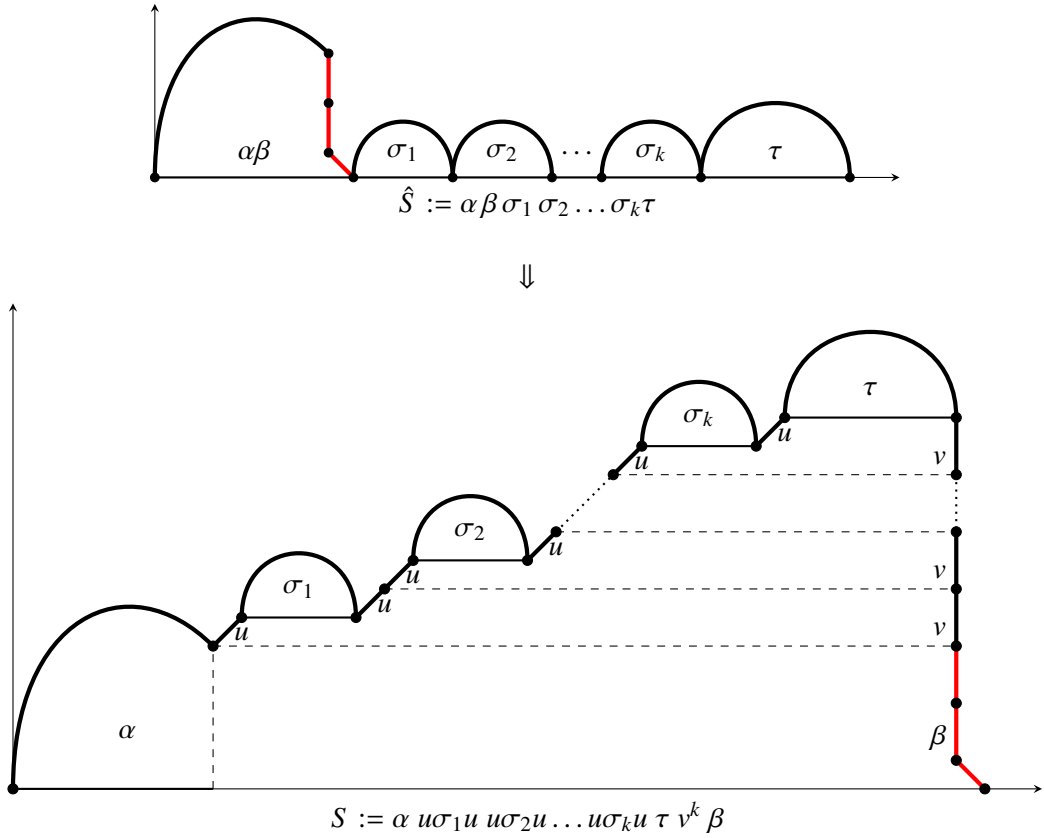  

Continuing with the 
labeled $F$-path $Q$ from Figure~\ref{fig:F} 
and the 
labeled $F$-paths $Q^{(0)}$, $\ldots$, $Q^{(19)}$ introduced in Subsection~\ref{sec:phi},
we now apply the map $\psi$ recursively
to obtain the corresponding UVD path $\psi(Q)$.
% $$\psi(Q) = uduududuuddduduuuududuuduuduuuuduuuuuduuuuuuuuudvvvdvvvvvvd,$$
%----------------------------------------------------
\begin{figure}[h]

\[
  \begin{tikzpicture}[scale=0.27]
    \pgfmathsetmacro{\x}{51}
    \pgfmathsetmacro{\y}{22}
    \pgfmathsetmacro{\xx}{\x+1}
    \pgfmathsetmacro{\yy}{\y+1}
    \draw[gray!40, ultra thin] (0,0) grid (\x,\y);
    \draw [-stealth] (0,0) -- (\xx,0);
    \draw [-stealth] (0,0) -- (0,\yy);
    
    \coordinate (0) at (0,0);
    \coordinate (1) at (1,1);
    \coordinate (2) at (2,0);
    \coordinate (3) at (3,1);
    \coordinate (4) at (4,2);
    \coordinate (5) at (5,1);
    \coordinate (6) at (6,2);
    \coordinate (7) at (7,1);
    \coordinate (8) at (8,2);
    \coordinate (9) at (9,3);
    \coordinate (10) at (10,2);
    \coordinate (11) at (11,1);
    \coordinate (12) at (12,0);
    \coordinate (13) at (13,1);
    \coordinate (14) at (14,0);
    \coordinate (15) at (15,1);
    \coordinate (16) at (16,2);
    \coordinate (17) at (17,3);
    \coordinate (18) at (18,4);
    \coordinate (19) at (19,3);
    \coordinate (20) at (20,4);
    \coordinate (21) at (21,3);
    \coordinate (22) at (21,3);
    \coordinate (23) at (22,4);
    \coordinate (24) at (23,5);
    \coordinate (25) at (24,4);
    \coordinate (26) at (25,5);
    \coordinate (27) at (26,6);
    \coordinate (28) at (27,5);
    \coordinate (29) at (28,6);
    \coordinate (30) at (29,7);
    \coordinate (31) at (30,8);
    \coordinate (32) at (31,9);
    \coordinate (33) at (32,8);
    \coordinate (34) at (33,9);
    \coordinate (35) at (34,10);
    \coordinate (36) at (35,11);
    \coordinate (37) at (36,12);
    \coordinate (38) at (37,13);
    \coordinate (39) at (38,12);
    \coordinate (40) at (39,13);
    \coordinate (41) at (40,14);
    \coordinate (42) at (41,15);
    \coordinate (43) at (42,16);
    \coordinate (44) at (43,17);
    \coordinate (45) at (44,18);
    \coordinate (46) at (45,19);
    \coordinate (47) at (46,20);
    \coordinate (48) at (47,21);
    \coordinate (49) at (48,20);
    \coordinate (50) at (48,18);
    \coordinate (51) at (48,16);
    \coordinate (52) at (48,14);
    \coordinate (53) at (49,13);
    \coordinate (54) at (49,11);
    \coordinate (55) at (49,9);
    \coordinate (56) at (49,7);
    \coordinate (57) at (49,5);
    \coordinate (58) at (49,3);
    \coordinate (59) at (49,1);
    \coordinate (60) at (50,0);

    \foreach \i in {0,...,60}{
      \filldraw (\i) circle (5pt);
    } 

    \foreach \i in {1,...,60}{
      \pgfmathsetmacro{\j}{\i-1}
      \draw[ultra thick] (\j) -- (\i);
    }

    % \node at (0, \y) [left] {$\psi(Q)$};
    \node at (0, 0) [below left] {$O$};
    \foreach \i in {5,10,...,50}{
      \node at (\i, 0) [below] {$\i$};
    }
    \foreach \i in {5,10,...,22}{
      \node at (0,\i) [left] {$\i$};
    }
  \end{tikzpicture}
\]
\caption{The UVD path $\psi(Q)$ corresponding to $Q$ in Figure~\ref{fig:F}.}
\label{fig:UVD}
\end{figure}
%----------------------------------------------------
Defining $S^{(j)} = \psi(Q^{(j)})$, 
in the process of finding $\psi(Q)$, 
we obtain some $S^{(j)}$'s as follows.
\begin{align*}
% S^{(0)}  &= ud, \\
S^{(3)}  &= ud\;ud\;ud\;ud, \\
S^{(4)}  &= ud\;ud\;ud\;uudd \\
S^{(5)}  &= ud\;uududuuddd, \\
S^{(9)}  &= ud\;uududuuddd\;ud\;ud\;ud\;ud, \\
S^{(10)} &= ud\;uududuuddd\;ud\;uuuududvd, \\
S^{(12)} &= ud\;uududuuddd\;ud\;uuuududvd\;ud\;ud, \\
S^{(13)} &= ud\;uududuuddd\;ud\;uuuududuuduudvvd, \\
S^{(17)} &= ud\;uududuuddd\;ud\;uuuududuuduudvvd\;ud\;ud\;ud\;ud, \\
S^{(18)} &= ud\;uududuuddd\;ud\;uuuududuuduudvvd\;ud\;ud\;uuuuuuuudvvvd, \\
S^{(19)} &= ud\;uududuuddd\;ud\;uuuududuuduuduuuuduuuuuduuuuuuuuudvvvdvvvvvvd.
\end{align*}
Here, a valley $du$ on the $x$-axis is indicated by a small gap between $d$ and $u$
so that valleys can be visually distinguished.
See Figure~\ref{fig:UVD} for the UVD path $\psi(Q)$.

\begin{thm}
\label{thm:UVD}
The map 
$$\psi : \bigcup_{n\geq 0} \LFp{n}
\to \bigcup_{n\geq 0} \UVD_{n+1}$$
is a bijection with $\height(Q) = \vox(\psi(Q))$.
\end{thm}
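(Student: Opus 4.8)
The identity $\vox(\psi(Q))=\height(Q)$ is already checked inside the recursive definition of $\psi$ in both Case~(1) and Case~(2), so it propagates by induction on the semilength; the real content of the theorem is therefore the bijectivity of $\psi$. Since $\psi$ sends $\LFp{n}$ into $\UVD_{n+1}$, the plan is to fix $n$ and produce, for each $S\in\UVD_{n+1}$, a unique preimage by reversing the single recursive step, exactly in the spirit of the proof of Theorem~\ref{thm:IS}: from $S$ I reconstruct both the truncated path $\hat S=\psi(\hat Q)$ and the label $(a;1)$ or $(a;b_1,\dots,b_k)$ of the last step of $Q$, and then finish by induction. As a shortcut that lightens the bookkeeping, I note that Theorem~\ref{thm:IS} gives $\lvert\LFp{n}\rvert=\lvert\IS_{n+1}(102)\rvert$, while the result of Seo and Shin together with the bijection $M$ gives $\lvert\IS_{n+1}(102)\rvert=\lvert\SP_{n+1}\rvert=\lvert\UVD_{n+1}\rvert$; as these sets are finite, it then suffices to prove that $\psi$ is injective (equivalently, that the reconstruction below is single-valued), and bijectivity follows by equicardinality.

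The first task is to read off the case. Every $S\in\UVD_{n+1}$ ends in $d$, and the penultimate step decides the case: in Case~(1) one has $S=\alpha\,u\,\beta\,d$ with $\beta$ either empty or ending in $d$, so the penultimate step is $u$ or $d$, whereas in Case~(2) the path terminates in $v^{k}\beta$ with $k\ge1$ and $\beta=v^{c}d$, so the penultimate step is $v$. Case~(1) is then easy: the second-to-last return of $S$ is exactly the return at which $\alpha$ ends, so the step immediately after it is the inserted $u$ and the last step is the appended $d$; deleting both yields $\hat S\in\UVD_{n}$, and I recover $a=\vox(\hat S)+1-\vox(S)$ together with the label $(a;1)$. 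Here one verifies that the splice of $\alpha$ with $\beta$ creates no forbidden $uv$ or $vu$ (the last step of $\alpha$ is a return, hence a $d$) and that $0\le a\le\vox(\hat S)$, so that $\hat S$ is a legitimate UVD path and the recovered label is admissible.

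Case~(2) is where the real work lies, and it is the step I expect to be the main obstacle. Reading the maximal trailing run of vertical steps gives its length $M$, after which I must recover the full decomposition $S=\alpha\,u\sigma_1u\cdots u\sigma_ku\,\tau\,v^{k}\beta$; from it I would set $a=\vox(\tau)+1$ and $b_i=-\vox(\sigma_i)-1$, reassemble $\hat S=\alpha\,\beta\,\sigma_1\cdots\sigma_k\,\tau$, and check $a\ge1$, each $b_i\le0$, and $\hat S\in\UVD_{n-k+1}$. The crux is to prove that this decomposition is \emph{forced}: that the trailing run splits uniquely as an inserted block $v^{k}$ followed by the vertical steps $v^{c}$ of $\beta$ (so $M=k+c$), and that the middle block parses uniquely into the $2k$ wrapper steps and the excursions $\sigma_i$. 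The mechanism that pins the split down is the UVD constraint forbidding $uv$ and $vu$ together with the admissible ranges of the label: a genuine vertical run of $\beta$ (the case $c\ge1$) must be preceded in $\hat S$ by a $d$, since the step $\hat s_p$ ending $\alpha$ satisfies $\hat s_p\ne v$ and must avoid creating $uv$; hence any alternative split either reintroduces a forbidden $uv$ at the junction of $\alpha$ and $\beta$ or produces an inadmissible label, leaving exactly one valid decomposition. Once this uniqueness is established, the reconstruction is single-valued, and induction on the semilength completes the proof that $\psi$ is a bijection with $\vox(\psi(Q))=\height(Q)$.
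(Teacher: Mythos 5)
Your proposal is correct and follows essentially the same route as the paper: invert the recursion by cases on the penultimate step of $S$, peeling off $\alpha\,u\,\beta\,d$ when that step is not vertical and $\alpha\,u\sigma_1u\cdots u\sigma_ku\,\tau\,v^k\beta$ when it is, and recovering the label from $\block(\beta)$, $\block(\tau)$, and $\block(\sigma_i)$; your explicit argument that the forbidden $uv$ at the $\alpha\mid\beta$ junction forces the split $M=k+c$ is in fact more careful than the paper's proof, which merely asserts the uniqueness of $k$. One minor slip: in Case (1) the admissible range is $0\le a\le \vox(\hat S)+1$ rather than $0\le a\le\vox(\hat S)$, since $\alpha$ may be empty (in which case $a=\block(\beta)=\block(\hat S)=\vox(\hat S)+1$); also, the equicardinality shortcut via Seo--Shin is harmless but unnecessary once the reconstruction is shown to be total and single-valued.
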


\begin{proof}
  As with $2$-Schr\"{o}der paths, 
  we denote the number of returns of a UVD path $S$ by $\block(S)$.
  For a given $Q=(x_0, y_0), (x_1, y_1), \dots, (x_{\ell}, y_{\ell})$ in $\mathcal{LF}_{n}$, 
  one can easily verify that
  $$ \block(\psi(Q)) = \vox(\psi(Q)) + 1 = \height(Q) +1.$$
  
  To prove the bijectivity of $\psi$, it suffices to show that, given a UVD 
  path $S$, one can uniquely reconstruct a labeled $F$-path $Q$ such that $S=\psi(Q)$,
  by determining the 
  labeled $F$-path $\hat{S}=\psi(\hat{Q})$ 
  and the last step
  $(a,b)$ of $Q$, along with its label. \\
  
  For $n\geq 1$, let $S$ be a UVD path in $\UVD_{n+1}$. 
  There are two cases to consider depending on the second-to-last step of $S$.
  
  \begin{enumerate}[(1)]
    \item 
    If the second-to-last step of $S$ is not vertical, 
    then $S$ can be decomposed as $\alpha \, u \, \beta \, d$,
    where both $\alpha$ and $\beta$ are (possibly empty) UVD paths.
    In this case, we recover
    $\hat{S} = \alpha \beta$, which is in $\UVD_{n}$ with 
    \begin{align*}
      \block(\hat{S}) 
      &= \block(\alpha) + \block(\beta) 
      = \block(S) - 1 + \block(\beta).
    \end{align*}
    Thus, we recover $(a,b)=(a,1)$ with label $(a; 1)$, where $a=\block(\beta)$.
    
    \item If the second-to-last step of $S$ is vertical,
    then there exists a unique positive integer $k$
    such that
    $S$ can be expressed as 
    \begin{align*}
    S = \alpha \; u \sigma_1 u\; u \sigma_2 u \dots u \sigma_k u \; \tau \; v^k \; \beta, 
    \end{align*}
    where 
    $\alpha \beta$, $\sigma_1$, $\dots$, $\sigma_k$, and $\tau$ 
    are UVD paths,
    $\beta$ is of the form $v \dots v d$, 
    and $\sigma_i$ may be empty.
    In this case, we recover 
    $$
    \hat{S} = \alpha \, \beta \, \sigma_1 \, \sigma_2 \dots \sigma_k \tau,
    $$
    which is in $\mathcal{UVD}_{n-k+1}$ with 
    \begin{align*}
      \block(\hat{S}) 
      &= \block(\alpha\beta) + \block(\sigma_1) + \dots + \block(\sigma_k) 
      + \block(\tau)\\
      &= \block(S) -(b_1 + \dots +b_k - a),
    \end{align*}
    where 
    $a = \block(\tau) \geq 1$ and
    $b_j = -\block(\sigma_j) \leq 0$ for $j=1, \dots, k$.
    Thus, we recover $(a,b)$ with label $(a; b_1, \dots, b_k)$.
  \end{enumerate}
  Therefore, the labeled $F$-path $\psi^{-1}(S)$ can be recovered recursively
    by attaching the step $(a,b)$ with its label
    to the end of $\psi^{-1}(\hat{S})$. Thus, $\psi$ is bijective.
\end{proof}

Combining the three maps $M$, $\phi$, and $\psi$, for a positive integer $n$, the map 
$$\phi \circ \psi^{-1} \circ M:\SP_{n} \to \IS_{n}(102)$$ 
is a bijection 
from the set of $2$-Schr\"{o}der paths having neither peaks $NE$ nor valleys $EN$ 
ending with a diagonal step $D$ 
to the set of $102$-avoiding inversion sequences.
In particular, $\block (P) = \rank (\phi \circ \psi^{-1} (P))$ for a path $P \in \SP_{n}$. 
The following diagram visually represents the connections and mappings between the sets
$\SP_{n}$, $\LFp{n-1}$, $\UVD_{n}$, and $\IS_{n}(102)$, along with the labels for each map.

\[
\begin{tikzcd} 
  & 
  \IS_{n}(102) \\
  \mathcal{SP}_n
    \arrow[rd, start anchor={[xshift=2ex]}, "M"' ] 
    \arrow[ru, start anchor={[xshift=2ex]}, dashed, "\phi \circ \psi^{-1} \circ M"] &&
  \mathcal{LF}_{n-1} 
    \arrow[lu, "\phi"'] 
    \arrow[ld, "\psi"] \\
  & 
  \mathcal{UVD}_{n} 
\end{tikzcd}
\]

%===============================================

\section{Refined enumeration of $102$-avoiding inversion sequences}
\label{sec:102-with-rank}

In this section, we provide the number of $102$-avoiding inversion sequences with a fixed rank using
the bijection $\phi \psi^{-1}$ and Lagrange inversion formula.
For nonnegative integers $n$ and $t$, let 
$\IS_{n,t}(102)$ denote the set of $e \in \IS_n(102)$ with $\rank (e) = t$ and
\(\mathcal{UVD}_{n,t} \) denote the set of $S\in \mathcal{UVD}_{n}$ with $\vox(S)=t$. 
Let
\begin{gather*}
D=D(x):=\sum_{n\ge 1 } | \mathcal{UVD}_{n} | \,x^n,\qquad 
D_t=D_t(x):=\sum_{n\ge 1 } | \mathcal{UVD}_{n,t} | \,x^n.
\end{gather*} 
By decomposition of UVD paths in $\cup_{n\ge1}\mathcal{UVD}_{n}$, we have
\begin{gather*}
D_t =D_0^{t+1},\qquad
D=\sum_{t\ge0}D_t=\frac{D_0}{1-D_0}.
\end{gather*} 

Recall that for a UVD path $S$, $S$ has the semilength $n$ if and only if the sum of the numbers of $d$ and $v$ in $S$ equals to $n$.  
Given a UVD path $S$, $S$ ends with either $v^0 d$ or $v^k d$ for some positive integer $k$.
So we get 
\begin{align*}
D&=x(1+D)^2 +\sum_{k\geq1}x^{k+1}(1+D)^{2k+1}D\\
&=x(1+D)^2 + \frac{x^2(1+D)^3D}{1-x(1+D)^2}.
\end{align*}
Multiplying both sides by $1-x(1+D)^2$, we obtain
$$
D=(x-x^2)(1+D)^3.
$$
Since $D=\frac{D_0}{1-D_0}$, we get
$$
D_0=(x-x^2)\frac{1}{(1-D_0)^2}
$$
and letting $y=x-x^2$, we have
\begin{equation}\label{eqn:formE}
E(y)= y\frac{1}{(1-E(y))^2},
\end{equation}
where the power series 
$E(y)$ satisfies $E(x-x^2)=D_0(x)$.
By applying
Lagrange inversion formula \cite[eq. (2.2.1)]{Ges16} to \eqref{eqn:formE}, we obtain
\begin{equation*}
\left[y^n\right]E(y)^{t+1}
=\frac{t+1}{n}\left[z^{n-t-1}\right]\left( \frac{1}{(1-z)^2}\right)^n 
= \frac{t+1}{n}\binom{3n-t-2}{n-t-1}.
\end{equation*}
Thus we have 
\begin{equation}\label{eqn:sumD}
D_0(x)^{t+1}=\sum_{n\ge t+1} \frac{t+1}{n}\binom{3n-t-2}{n-t-1} \left(x-x^2\right)^n.
\end{equation}
Since $D_t=D_0^{t+1}$, we obtain the following theorem
from Theorems~\ref{thm:IS} and \ref{thm:UVD} and \eqref{eqn:sumD}.
\begin{thm}
Let $n$ and $t$ be integers such that $n \ge 1$ and $0 \le t \le n-1$. Then we have
$$| \IS_{n,t}(102)| = | \mathcal{UVD}_{n,t} | = \sum_{j =t+1}^{n} (-1)^{n-j}\frac{t+1}{j}\binom{3j-t-2}{j-t-1}\binom{j}{n-j}.$$
\end{thm}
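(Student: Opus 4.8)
The plan is to reduce the statement to a single coefficient extraction from the power series $D_t(x)$, since all the generating-function machinery needed has already been assembled above.

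First I would establish the left-hand equality $|\IS_{n,t}(102)| = |\UVD_{n,t}|$. By Theorem~\ref{thm:IS} the map $\phi\colon \LFp{n-1}\to\IS_n(102)$ is a bijection with $\height(Q)=\rank(\phi(Q))$, and by Theorem~\ref{thm:UVD} the map $\psi\colon\LFp{n-1}\to\UVD_n$ is a bijection with $\height(Q)=\vox(\psi(Q))$. Hence the composite $\phi\circ\psi^{-1}\colon\UVD_n\to\IS_n(102)$ is a bijection carrying each $S$ with $\vox(S)=t$ to an inversion sequence of rank $t$; restricting to the fibre $\vox = t$ gives a bijection $\UVD_{n,t}\to\IS_{n,t}(102)$, so the two cardinalities agree. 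It therefore suffices to compute $|\UVD_{n,t}| = [x^n]D_t(x)$.

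Next I would invoke the identities already derived. Since $D_t = D_0^{t+1}$ and \eqref{eqn:sumD} gives the closed form of $D_0^{t+1}$, we have
$$D_t(x) = \sum_{m\ge t+1}\frac{t+1}{m}\binom{3m-t-2}{m-t-1}(x-x^2)^m.$$
The only remaining task is to read off the coefficient of $x^n$. Writing $(x-x^2)^m = x^m(1-x)^m$ and expanding by the binomial theorem gives
$$[x^n](x-x^2)^m = (-1)^{n-m}\binom{m}{n-m},$$
which is nonzero precisely when $\lceil n/2\rceil \le m \le n$.

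Finally I would substitute this into the series for $D_t$ and rename $m$ as $j$, obtaining
$$|\UVD_{n,t}| = \sum_{j\ge t+1}(-1)^{n-j}\frac{t+1}{j}\binom{3j-t-2}{j-t-1}\binom{j}{n-j}.$$
Because $\binom{j}{n-j}$ vanishes whenever $j < \lceil n/2\rceil$ (in particular when $j < n-j$) and whenever $j>n$, the summand is zero outside $t+1\le j\le n$, so the sum may be truncated to the stated range without changing its value, yielding the claimed formula. There is no deep obstacle here, as the hard analytic work was done in deriving \eqref{eqn:formE} and \eqref{eqn:sumD}; the one point requiring care is the index bookkeeping — keeping the $\LFp{n-1}$-to-$\IS_n$ and $\LFp{n-1}$-to-$\UVD_n$ shift straight when transferring the statistic, and verifying that $\binom{j}{n-j}$ correctly enforces the summation bounds so that extending or truncating the range is harmless.
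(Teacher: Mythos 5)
Your proposal is correct and follows essentially the same route as the paper: the bijections of Theorems~\ref{thm:IS} and~\ref{thm:UVD} give $|\IS_{n,t}(102)|=|\UVD_{n,t}|$, and the formula is then read off from \eqref{eqn:sumD} by extracting $[x^n](x-x^2)^j=(-1)^{n-j}\binom{j}{n-j}$. The only difference is that you spell out the coefficient extraction and the vanishing of out-of-range terms, which the paper leaves implicit.
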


%===================================================

\section{\( (102, \tau) \)-avoiding inversion sequences with a fixed rank}
\label{sec:102-tau-with-rank}

The enumeration of inversion sequences avoiding the pattern \(102\) 
and another pattern \(\tau\) of length 3
has been previously studied by 
Corteel et al.~\cite{CMSW16}, 
Huh et al.~\cite{HKSS24},
Kotsireas et al.~\cite{KMY24},
Testart~\cite{Testart24}, and
Yan and Lin~\cite{YanLin20}. 
Among these, the \((102,101)\)-avoiding case has been particularly well-studied in~\cite{HKSS24},
where several statistics on \(\IS_n(102,101)\) were investigated and explicitly enumerated.
In particular, our rank statistic appears naturally in that context as one of the statistics considered.
One of their results can be restated as follows (compare with \(a_n(*,*,m)\) in Corollary~6.3 of~\cite{HKSS24}). 

\begin{prop}[\cite{HKSS24}] 
For integers \(n \geq 2\) and \(0 \leq t \leq n-2\), the number of 
\((102,101)\)-avoiding inversion sequences \(e\) of length \( n \) with \(\rank(e) = t\) is given by
\[
\frac{t+1}{n} \sum_{i=1}^{n-t-1} \binom{n}{i} \binom{n-t+i-2}{2i-1}.
\]
\end{prop}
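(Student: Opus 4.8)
The plan is to deduce the formula from the bijection $\phi$ of Theorem~\ref{thm:IS} by restricting it to the subfamily of labeled $F$-paths all of whose steps have semilength $1$ (the \emph{plain} $F$-paths, i.e.\ the unlabeled $F$-paths of~\cite{HKSS24}), and then to enumerate these by height in the manner of Section~\ref{sec:102-with-rank}.

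First I would identify which labeled $F$-paths $Q\in\LFp{n-1}$ satisfy $\phi(Q)\in\IS_n(102,101)$ and show that these are exactly the plain ones. The decisive direction is that a step of semilength $k\ge 2$ forces a $101$ pattern: in Case~(2) of $\phi$ the maximal value $m$ is inserted at positions $j_1<j_2\le\cdots\le j_k$ with $j_2-j_1=1-b_1\ge 1$, so some entry $\hat e_j$ with $j_1<j\le j_2$ lies strictly between two copies of $m$, and since that entry is $<m$ the triple forms a $101$ pattern. Conversely a plain $F$-path inserts a single copy of $m$ at each step, and one checks, as in~\cite{HKSS24}, that $\phi(Q)$ then avoids $101$. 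Since $\phi$ is a bijection with $\rank(\phi(Q))=\height(Q)$, it restricts to a rank-preserving bijection between plain $F$-paths of semilength $n-1$ and $\IS_n(102,101)$.

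Next I would enumerate plain $F$-paths by semilength and height, following Section~\ref{sec:102-with-rank} in structure. Let $\mathcal{G}=\mathcal{G}(x)$ be the generating function, graded so that $[x^n]\mathcal{G}$ counts the plain $F$-paths corresponding to $\IS_n(102,101)$, and let $\mathcal{G}_t$ be its specialization to height $t$. Applying the first-return (block) decomposition used for UVD paths in Section~\ref{sec:102-with-rank} — either directly, or after transporting along $\psi$, under which height becomes $\vox$ and the plain subfamily is closed under concatenation at returns — should give
\[
\mathcal{G}_t=\mathcal{G}_0^{\,t+1},
\]
while decomposing a single height-$0$ block is expected to yield the functional equation
\[
\mathcal{G}_0=x\,\frac{1-\mathcal{G}_0+\mathcal{G}_0^{2}}{(1-\mathcal{G}_0)^2}
=x\,\Psi(\mathcal{G}_0),
\qquad
\Psi(\lambda)=1+\frac{\lambda}{(1-\lambda)^2}.
\]

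Finally, the Lagrange inversion formula \cite[eq.~(2.2.1)]{Ges16}, exactly as in Section~\ref{sec:102-with-rank}, gives
\[
[x^n]\,\mathcal{G}_0^{\,t+1}
=\frac{t+1}{n}\,[\lambda^{\,n-t-1}]\,\Psi(\lambda)^n
=\frac{t+1}{n}\sum_{i=1}^{n-t-1}\binom{n}{i}\,[\lambda^{\,n-t-1-i}]\frac{1}{(1-\lambda)^{2i}},
\]
and since $[\lambda^{\,n-t-1-i}](1-\lambda)^{-2i}=\binom{n-t+i-2}{2i-1}$ this equals the claimed sum, with $i$ ranging over $1\le i\le n-t-1$ because the bracket vanishes otherwise (which uses $n-t-1\ge 1$, i.e.\ $t\le n-2$). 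The main obstacle is the middle paragraph: one must verify that the plainness condition is compatible with the block decomposition so that $\mathcal{G}_t=\mathcal{G}_0^{\,t+1}$ holds, and must derive the exact kernel $\Psi(\lambda)=1+\lambda/(1-\lambda)^2$ for the restricted family. Once $\Psi$ is pinned down, the inversion and the binomial simplification are routine and reproduce the formula term by term.
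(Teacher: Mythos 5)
The paper does not actually prove this proposition---it is imported verbatim from Corollary~6.3 of \cite{HKSS24}---so your self-contained derivation via the machinery of Sections~\ref{sec:bijections} and~\ref{sec:102-with-rank} is a genuinely different (and more informative) route, and it is correct. Your first paragraph is sound: a step of semilength $k\ge2$ inserts two copies of $m>\max(\hat e)$ with at least one smaller entry of $\hat e$ strictly between them (since $j_2-j_1=1-b_1\ge1$), producing a $101$; conversely, for a plain path an inductive check of the two cases of $\phi$ shows no $101$ is created (in Case~(1) with $a=0$ the new copy of $m$ is adjacent to $\hat e_{\hat p}$ and everything to its left up to position $\hat p$ is weakly increasing; when $a\ge1$ the inserted value is a strict new maximum occurring once). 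The one step you flag as the ``main obstacle'' is in fact routine and does not require transporting along $\psi$: record a plain $F$-path by its height profile $h_j=y_j-x_j$. The only step raising the height is $(0,1)$ (one step, weight $1$), while for each $c\ge0$ there are exactly $c+1$ steps of height-change $-c$ (namely $(c+1,1)$ together with the $c$ pure down steps $(a,a-c)$, $1\le a\le c$). Writing $\mathcal{G}_0=xL_0$ with $L_0$ the excursion generating function of this weighted {\L}ukasiewicz-type path, the last-passage decomposition at levels $1,\dots,t$ gives $L_t=x^tL_0^{t+1}$, hence $\mathcal{G}_t=\mathcal{G}_0^{t+1}$, and removing the last step of a nonempty excursion (a step of height-change $-c$ from a path ending at height $c$) gives
\[
L_0=1+\sum_{c\ge0}(c+1)\,x\cdot x^cL_0^{c+1}=1+\frac{xL_0}{(1-xL_0)^2},
\]
which is exactly your kernel $\mathcal{G}_0=x\Psi(\mathcal{G}_0)$. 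From there your Lagrange inversion and the evaluation $[\lambda^{n-t-1-i}](1-\lambda)^{-2i}=\binom{n-t+i-2}{2i-1}$ reproduce the stated formula (I checked $n\le4$ numerically). So the proposal is correct once this easy decomposition is written out; compared with the paper's bare citation, your argument has the added benefit of exhibiting the rank-preserving bijection between $\IS_n(102,101)$ and the plain (unlabeled) $F$-paths inside $\LFp{n-1}$, which meshes naturally with the treatment of the other patterns $\tau$ in Section~\ref{sec:102-tau-with-rank}.
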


Motivated by this, we extend the study to the enumeration of inversion sequences of length \( n \)
that avoid both patterns \( 102 \) and \( \tau \in \{001, 011, 012, 021, 110, 120, 201, 210\} \), 
and have fixed rank \( t \). 
Here, we exclude the patterns \(000\), \(010\), and \(100\) from the set of candidates for \(\tau\),
as these cases do not easily admit a generalizable description in terms of the rank statistic.
 
We denote the set of such sequences by 
\[
\IS_{n,t}(102, \tau) := \{ e \in \IS_n(102, \tau) : \rank(e) = t \}. 
\]
Note that \(\IS_{n,n-1}(102, \tau)\) is the singleton set \(\{(0, 0, \dots, 0)\}\) or the empty set, 
depending on the choice of \(\tau\) and \( n \).
Therefore, we restrict our attention to the range \(n \geq 2\) and \(0 \leq t \leq n - 2\).

%-----------------------------------------------------------------------------
\subsection{(102, 001)-avoiding inversion sequences}

It was shown in \cite{CMSW16} that \( e \in \IS_{n}(001) \) if and only if \( e \) is of the form
\begin{equation}\label{eq:smsw}
e_1<e_2<\cdots<e_{k}\geq e_{k+1}\geq \cdots \geq e_n
\end{equation}
for some \( k \), and furthermore, that every 
\( 001 \)-avoiding inversion sequence also avoids the pattern \( 102 \). Therefore, we conclude that 
\( e \in \IS_{n}(102,001) \) if and only if \( e \) is of the form \eqref{eq:smsw}.

\begin{prop}\label{prop:102001}
For integers \(n \geq 2\) and \(0 \leq t \leq n-2\), the number of 
\((102,001)\)-avoiding inversion sequences \(e\) of length \( n \) with \(\rank(e) = t\) is given by
\( 2^{n - t - 2} \).
\end{prop}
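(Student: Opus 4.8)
The plan is to exploit the explicit structural characterization \eqref{eq:smsw}, which says that $e \in \IS_n(102,001)$ precisely when $e$ is strictly increasing up to some peak and then weakly decreasing. First I would observe that this form is far more rigid than it appears, because $e_1 = 0$ for every inversion sequence. Indeed, if $e$ is strictly increasing on its first $r$ entries, then $0 = e_1 < e_2 < \cdots < e_r$ together with $e_j \le j-1$ forces $e_j = j-1$ for $1 \le j \le r$. Consequently, writing $m = \max(e)$, the strictly increasing part is exactly $(0,1,\dots,m)$, occupying positions $1,\dots,m+1$, so the first occurrence of the maximum is at position $m+1$.

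Next I would pin down the rank. Since the tail $e_{m+1} \ge e_{m+2} \ge \cdots \ge e_n$ is weakly decreasing and begins at the value $m$, the maximum value $m$ occupies a block of consecutive positions $m+1, m+2, \dots, p$, where $p = \prmx(e)$ is the first strict descent (equivalently, the last position carrying the value $m$, consistent with $e_p = \max(e)$ for $102$-avoiding sequences). Hence $\rank(e) = p - m - 1$, so fixing $\rank(e) = t$ is the same as fixing $p = m + t + 1$. After this block the entries strictly drop below $m$ and continue weakly decreasing, so the suffix $(e_{p+1},\dots,e_n)$ is an arbitrary weakly decreasing word of length $n - p = n - m - t - 1$ with entries in $\{0,1,\dots,m-1\}$.

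Then I would count. For fixed $m$, a weakly decreasing word of length $L := n-m-t-1$ over the $m$-element alphabet $\{0,\dots,m-1\}$ is the same as a multiset of size $L$ on $m$ symbols, of which there are $\binom{L+m-1}{m-1} = \binom{n-t-2}{m-1}$. The admissible range is $1 \le m \le n-t-1$: the lower bound because $m = 0$ gives the all-zero sequence, which has rank $n-1$ and is excluded, and the upper bound because $L \ge 0$. Summing then gives
\[
\sum_{m=1}^{n-t-1}\binom{n-t-2}{m-1}
=\sum_{i=0}^{n-t-2}\binom{n-t-2}{i}
=2^{n-t-2}
\]
by the binomial theorem. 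The points where I expect to spend the most care are verifying that the strictly increasing prefix is genuinely forced to be $(0,1,\dots,m)$ and that $\prmx(e)$ coincides with the last position of the maximal block, so that the identity $\rank(e) = p - m - 1$ is exact; once these are secured, the boundary cases—an empty tail when $m = n-t-1$ and the excluded all-zero sequence—fall out immediately.
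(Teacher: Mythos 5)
Your proof is correct and follows essentially the same route as the paper: both use the characterization \eqref{eq:smsw}, observe that the strictly increasing prefix is forced to be $0,1,\dots$, identify the rank with the length of the block of maxima, and count the weakly decreasing tails to get $\sum_{m=1}^{n-t-1}\binom{n-t-2}{m-1}=2^{n-t-2}$. The only difference is that you spell out the stars-and-bars count and the forcing argument, which the paper leaves implicit.
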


\begin{proof}
Consider inversion sequences \( e=(e_1,e_2, \dots, e_n) \). 
From \eqref{eq:smsw}, we see that \( e \in \IS_{n,t}(102,001) \) if and only if \( e \) is of the form
\[
e_i=i-1 \mbox{ for \( 1\leq i \leq m \)}, \qquad 
m=e_{m+1}=\cdots =e_{m+t+1} > e_{m+t+2} \geq \cdots \geq e_n
\]
for some \( m \).  The number of such sequences is given by 
\[ 
\sum_{m=1}^{n-t-1}\binom{n-t-2}{m-1} =2^{n-t-2}.
\] 
\end{proof}

%-----------------------------------------------------------------------------

\subsection{(102, 011)-avoiding inversion sequences}

We begin with the following lemma, which provides a finer understanding of the structure of \((102,011)\)-avoiding inversion sequences.

\begin{lem}\label{lem:102011}
For positive integers \( n \) and \( t \) with \( t \leq n-1 \), we have
\[
| \IS_{n+1,t}(102,011) |=| \IS_{n,t-1}(102,011)|.
\]
\end{lem}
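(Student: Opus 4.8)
The plan is to establish a bijection between $\IS_{n+1,t}(102,011)$ and $\IS_{n,t-1}(102,011)$ by analyzing the structure of $011$-avoiding inversion sequences. Recall that for $e \in \IS_{n+1}(102)$ with $\prmx(e)=p$, we have $e_p = \max(e) =: m$, and $\rank(e) = (p-1)-m = t$. The pattern $011$ forbids two equal entries that are both strictly preceded (in value) by some earlier entry; together with $102$-avoidance, this should tightly constrain how repeated values can appear. First I would use the $011$-avoidance condition, combined with the structure of $\prmx$ and $\max$, to pin down exactly where the maximal value $m$ can repeat and what the initial segment $e_1 \le e_2 \le \cdots \le e_p$ looks like.

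The key structural observation I expect to need is that, under $011$-avoidance, the weakly increasing prefix up to position $p$ is forced to be \emph{strictly} increasing from some point onward, or that the entries realizing the rank gap are tightly controlled. Concretely, since $\rank(e)=t$ means $p-1 = m+t$, there are $t$ ``extra'' positions in the prefix beyond what a strictly increasing run to value $m$ would require. I would argue that $011$-avoidance forces these repetitions to occur only among the \emph{smallest} values (near the start of the sequence), because any repeated value $e_i = e_j$ with $i<j$ that is preceded by a strictly smaller entry would create an $011$ pattern. This should show that an $(102,011)$-avoiding sequence of length $n+1$ and rank $t$ must begin with a controlled block (such as a leading $0$ or a forced equality at the start), whose removal or contraction produces a sequence of length $n$ with rank $t-1$.

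The bijection itself I would construct by \textbf{deleting a distinguished entry} — most naturally the first entry $e_1 = 0$, or the entry responsible for one unit of the rank — and then verifying that the resulting shorter sequence $\hat{e}$ of length $n$ still avoids $102$ and $011$, with $\rank(\hat{e}) = t-1$. The inverse map \emph{prepends} or reinserts the corresponding entry. The main checks are: (i) deletion preserves both avoidance conditions, which follows because deleting an entry cannot create a forbidden pattern; (ii) the rank drops by exactly one, which requires tracking how $\prmx$ and $\max$ change under the deletion — the delicate point is ensuring $\max$ stays fixed while $\prmx$ decreases by one (or an equivalent coordinated shift). I would verify this via the formula $\rank = \prmx - \max - 1$.

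The hard part will be \textbf{identifying the correct canonical entry to delete} so that the map is genuinely invertible and rank-preserving up to the shift — in particular, ruling out edge cases where the naive deletion changes $\max(e)$ or collapses the first-descent position in an uncontrolled way. I expect this obstacle to be resolved precisely by the strong rigidity that $011$-avoidance imposes on the prefix: once I show that the repetitions forcing the rank must sit among the lowest values at the very beginning, the choice of deleted entry becomes unambiguous, and both the forward map and its inverse are well-defined. I would therefore carry out the prefix-structure lemma first, extract the canonical deletion site from it, and only then verify the two avoidance conditions and the rank shift to complete the proof.
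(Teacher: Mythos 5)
Your overall approach coincides with the paper's: the bijection is exactly deletion of the first entry $e_1=0$, with prepending a $0$ as the inverse, and your structural observation that $011$-avoidance confines repeated values to the bottom of the sequence is the right rigidity statement. However, your checklist has a genuine gap: items (i) and (ii) cover pattern preservation and the rank shift, but you never address why $(e_2,\dots,e_{n+1})$, after reindexing, is an \emph{inversion sequence of length $n$} at all. That requires $e_j\le j-2$ for every $j\ge 2$, which is not automatic and is precisely the only place the hypothesis $t>0$ enters. The paper's argument is short: if $e_i=i-1$ for some $i\ge 2$, then since $e_1=0$ forces every repeated value in a $011$-avoiding inversion sequence to be $0$, the weakly increasing prefix must continue with $e_{i+1}=i$, $e_{i+2}=i+1$, and so on up to the position $p=\prmx(e)$, giving $\max(e)=e_p=p-1$ and hence $\rank(e)=0$, contradicting $t\ge 1$. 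Without this step the map is not even well defined into $\IS_n$.

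By contrast, the points you flag as the ``hard part'' are the easy ones: deleting $e_1=0$ cannot change $\max(e)$ (the all-zero sequence has rank $n>t$ and so never occurs here), and $\prmx$ drops by exactly one because every index shifts by one while the value sequence is otherwise unchanged, so $\rank=\prmx-\max-1$ drops by exactly one with no case analysis. If you add the missing well-definedness argument above, your plan completes to a correct proof along the same lines as the paper's.
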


\begin{proof}
We prove the lemma by exhibiting a bijection.

Let \( e = (e_1, e_2, \dots, e_{n+1}) \in \IS_{n+1,t}(102,011) \) with \( t > 0 \).  
Define a map
\( \varphi(e) := (e_2, \dots, e_{n+1})\).
To show that \( \varphi(e) \in \IS_{n,t-1}(102,011) \), observe that  
if \( e_i = i - 1 \) for some \( i \geq 2 \), then by the structure of \( (102,011) \)-avoiding sequences,  
we must have \( e_j = j - 1 \) for all \( j \geq i \) up to the maximum value,  
which forces \( \rank(e) = 0 \), contradicting the assumption \( t > 0 \).  
Hence, \( e_i < i - 1 \) for all \( i \geq 2 \), and the tail sequence lies in \( \IS_{n}(102,011) \) with rank \( t - 1 \).
The map \( \varphi \) is clearly bijective, completing the proof.
\end{proof}

Yan and Lin~\cite[Theorem~3.1]{YanLin20} showed that 
the total number of \((102,011)\)-avoiding inversion sequences of length \(n\) is given by
\begin{equation}\label{eq:fibo}
|\IS_{n}(102,011)| = F_{2n-1},
\end{equation}
where \( \{F_n\} \) is the Fibonacci sequence, 
defined by the recurrence \( F_n = F_{n-1} + F_{n-2} \) with initial values 
\( F_0 = 0 \) and \( F_1 = 1 \).

As a direct consequence of Lemma~\ref{lem:102011}, we now refine this result by enumerating the number of such sequences according to their rank.

\begin{prop}\label{prop:102011}
For integers \(n \geq 2\) and \(0 \leq t \leq n-2\), the number of 
\((102,011)\)-avoiding inversion sequences \(e\) of length \( n \) with \(\rank(e) = t\) is given by
\( F_{2n-2t-2} \), where \( F_k \) denotes the \( k \)-th Fibonacci number.
\end{prop}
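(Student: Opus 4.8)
The plan is to combine the recursive identity of Lemma~\ref{lem:102011} with the total count \eqref{eq:fibo}, using induction on the rank $t$ to peel off the problem layer by layer. The base case $t=0$ is the crucial computation: I would show that the number of $(102,011)$-avoiding inversion sequences of length $n$ with $\rank(e)=0$ equals $F_{2n-2}$. To do this, I would sum the identity $|\IS_{n}(102,011)|=\sum_{t\ge 0}|\IS_{n,t}(102,011)|$ and invoke Lemma~\ref{lem:102011} repeatedly, which tells us $|\IS_{n,t}(102,011)|=|\IS_{n-t,0}(102,011)|$ for every $t\ge 0$. Setting $g_n:=|\IS_{n,0}(102,011)|$, the total-count formula \eqref{eq:fibo} becomes $F_{2n-1}=\sum_{t=0}^{n-1}g_{n-t}=\sum_{m=1}^{n}g_m$. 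Hence $g_n=F_{2n-1}-F_{2n-3}$, and the Fibonacci identity $F_{2n-1}-F_{2n-3}=F_{2n-2}$ (which follows from $F_{2n-1}=F_{2n-2}+F_{2n-3}$) yields $g_n=F_{2n-2}$ directly.

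With the base case in hand, the general statement is immediate: applying Lemma~\ref{lem:102011} exactly $t$ times gives
\[
|\IS_{n,t}(102,011)| = |\IS_{n-t,\,0}(102,011)| = g_{n-t} = F_{2(n-t)-2}=F_{2n-2t-2},
\]
which is the claimed formula. I would organize the write-up so that the telescoping sum argument for $g_n$ comes first, then the one-line iteration of the lemma completes the proof.

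The main obstacle — and it is a mild one — is confirming that the range conditions on $n$ and $t$ are respected when iterating the lemma. Lemma~\ref{lem:102011} requires positive integers $n,t$ with $t\le n-1$, so each application of $\IS_{n,t}\to\IS_{n-1,t-1}$ stays valid precisely while $t\ge 1$ and the length exceeds the rank; I would check that starting from $n\ge 2$, $0\le t\le n-2$ the descent reaches $\IS_{n-t,0}$ without violating these bounds (the final step lands at length $n-t\ge 2$, which is safely positive). The Fibonacci recurrence step is routine, and the telescoping sum over $t$ from $0$ to $n-1$ exactly matches the decomposition of $\IS_{n}(102,011)$ by rank, since the maximal rank of a length-$n$ sequence is $n-1$. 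No deeper combinatorial input is needed beyond Lemma~\ref{lem:102011} and the established total count.
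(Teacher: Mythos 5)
Your proof is correct and uses the same two ingredients as the paper's own argument --- Lemma~\ref{lem:102011} and the total count \eqref{eq:fibo} --- merely reorganized: the paper runs an induction on $n$, extracting the rank-$0$ count at each stage by subtracting $F_{2n-1}$ from $F_{2n+1}$, while you iterate the lemma down to rank $0$ and then difference the cumulative sums $\sum_{m=1}^{n} g_m = F_{2n-1}$, which amounts to the same computation. The only point to tighten is the $t=n-1$ term of your telescoping sum, which falls just outside the stated hypotheses of Lemma~\ref{lem:102011}; since $\IS_{n,n-1}(102,011)$ is the singleton $\{(0,\dots,0)\}$, the needed equality $|\IS_{n,n-1}(102,011)| = 1 = g_1$ holds directly (the paper's inductive step silently uses the same boundary case).
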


\begin{proof}
We proceed by induction on \( n \geq 2 \).
For the base case \( n = 2 \) and \( t = 0 \), we have
\[
|\IS_{2,0}(102,011)| = |\{01\}| = 1 = F_2.
\]

Now assume that the proposition holds for some \( n \geq 2 \) and all \( 0 \leq t \leq n - 2 \).
By Lemma~\ref{lem:102011} and the induction hypothesis, for \( 1 \leq t \leq n - 1 \), we have
\[
|\IS_{n+1,t}(102,011)| = |\IS_{n,t-1}(102,011)| = F_{2n - 2(t - 1) - 2} = F_{2(n+1) - 2t - 2}.
\]
Moreover, by~\eqref{eq:fibo}, it follows that
\[
\sum_{t = 1}^{n} |\IS_{n+1,t}(102,011)| = \sum_{t = 1}^{n} |\IS_{n,t-1}(102,011)| = F_{2n - 1},
\]
and
\[
|\IS_{n+1,0}(102,011)| = F_{2n+1} - F_{2n-1} = F_{2n} = F_{2(n+1) - 2}.
\]
Therefore, the proposition holds for \( n+1 \), completing the induction step.
\end{proof}

%-----------------------------------------------------------------------------

\subsection{(102, 012)-avoiding inversion sequences}

The Fibonacci numbers \( F_{n+1} \) are well-known to count the number of tilings of a \( 1 \times n \) board using unit squares \( S \) and dominoes \( D \), where each domino covers two adjacent cells.  

In light of the identity
\begin{equation}\label{eq:fibo2}
|\IS_n(102,012)| = F_{2n - 1},
\end{equation}
established by Yan and Lin~\cite[Theorem~3.1]{YanLin20},  
we construct a bijection between \((102,012)\)-avoiding inversion sequences of length \(n\) 
and domino tilings of a board of length \(2n - 2\).  
This bijection allows a natural refinement by the rank statistic as follows.

\begin{prop}\label{prop:102012}
For integers \(n \geq 2\) and \(0 \leq t \leq n-2\), the number of 
\((102,012)\)-avoiding inversion sequences \(e\) of length \( n \) with \(\rank(e) = t\) is given by
\( (t+1)F_{2n-2t-3} \), where \( F_k \) denotes the \( k \)-th Fibonacci number.
\end{prop}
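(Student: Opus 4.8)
The plan is to combine a structural normal form for these sequences with the square--domino tiling bijection announced just above the statement. First I would observe that for inversion sequences $012$-avoidance already forces $102$-avoidance: since $e_1=0$, any two indices $j<k$ with $1\le e_j<e_k$ would create the increasing triple $e_1<e_j<e_k$, so in a $012$-avoider the nonzero entries are weakly decreasing, and this immediately rules out the middle value required by a $102$ pattern. Hence $\IS_n(102,012)=\IS_n(012)$, and each such $e$ is characterized by $e_1=0$, the bounds $e_i\le i-1$, and the property that its nonzero entries read left to right are weakly decreasing.

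Next I would read off the rank. For $e\in\IS_{n,t}(102,012)$ with $t\le n-2$ the maximum $m=\max(e)$ is positive, and since the weakly increasing prefix up to the first descent avoids $012$ it uses only the two values $0$ and $m$; thus this prefix is $0^a m^b$ with $a\ge m$ (forced by $e_{a+1}=m\le a$), with $b\ge 1$, and with $a+b=\prmx(e)=\rank(e)+m+1=m+t+1$. Therefore $b=m+t+1-a$ ranges exactly over $\{1,\dots,t+1\}$, giving precisely $t+1$ admissible prefix shapes, while the first-descent position $p=m+t+1$, the suffix length $n-p$, and the set of admissible suffixes depend only on $m$ and $t$, not on $a$. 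Consequently $|\IS_{n,t}(102,012)|=(t+1)\sum_{m}N_m$, where $N_m$ counts the suffixes of length $n-m-t-1$ over $\{0,\dots,m\}$ whose nonzero entries are weakly decreasing and whose first entry is not $m$.

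It then remains to prove $\sum_m N_m=F_{2n-2t-3}$, and this is where the tiling bijection enters. Writing $s=n-t-1$, I would exhibit a bijection between the disjoint union over $m\ge 1$ of these suffix families and the tilings of a board of length $2s-2=2n-2t-4$ by unit squares and dominoes, of which there are $F_{2s-1}=F_{2n-2t-3}$; the $t+1$ prefix shapes account for the remaining $2t+2$ cells of the length-$(2n-2)$ board, matching the global bijection. Concretely, $m$ would be encoded by an initial block of $2m-2$ cells and each of the $n-m-t-1$ suffix positions by two cells, with descents of the weakly decreasing nonzero part recorded as dominoes and the zeros and repeated values as squares, so that the total length is exactly $2m-2+2(n-m-t-1)=2n-2t-4$. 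Equivalently, one may reduce to the unrestricted counts $P_m$ (the same suffixes without the ``first entry $\neq m$'' condition) via $N_m=P_m-P_m'$ and prove $\sum_m P_m=F_{2n-2t-2}$.

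The main obstacle is exactly this last bijection: one must check both well-definedness and invertibility and, crucially, that the length bookkeeping yields a board of length $2n-2t-4$ rather than $2n-2t-3$, since an off-by-one there would substitute the wrong Fibonacci number. A clean alternative that isolates this difficulty to a single base case is induction on $n$: deleting the leading $0$ (when $a>m$) is a bijection from the sequences with $a>m$ onto $\IS_{n-1,t-1}(102,012)$, whereas collapsing the plateau $m^{t+1}$ to $m$ identifies the sequences with $a=m$ with $\IS_{n-t,0}(102,012)$, giving the recursion $|\IS_{n,t}(102,012)|=|\IS_{n-1,t-1}(102,012)|+|\IS_{n-t,0}(102,012)|$. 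Feeding in the total count $|\IS_n(102,012)|=F_{2n-1}$ of Yan and Lin together with $(t+1)F_{2n-2t-3}=tF_{2n-2t-3}+F_{2n-2t-3}$, the proposition then follows once the rank-$0$ case $|\IS_{n,0}(102,012)|=F_{2n-3}$ is established, which is precisely the Fibonacci identity above and hence the same crux.
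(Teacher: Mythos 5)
Your structural analysis is correct, and your factor of $t+1$ is extracted exactly as in the paper (your parameter $a$ plays the role of the paper's $i$, the number of zeros among positions $m+1,\dots,m+t+1$). But the route you take afterwards is genuinely different: the paper constructs one explicit global bijection $\varphi$ from $\IS_n(102,012)$ onto square--domino tilings of a length-$(2n-2)$ board, sending each position $k>m$ to $D$ or to a block $SD^{b_j}S$, and then reads the rank off the initial segment of the tiling, arriving at $(t+1)(F_{2n-2t-5}+F_{2n-2t-4})$. You instead factor the count as $(t+1)\sum_m N_m$ and offer either an unverified suffix-by-suffix tiling encoding or a recursion. As written, neither of your routes is finished: you explicitly leave the identity $\sum_m N_m=F_{2n-2t-3}$ (equivalently the rank-$0$ count $F_{2n-3}$) unproved and call it ``the same crux.'' That is the one genuine gap in your write-up.

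The gap, however, is closed by your own recursion, so no bijection is needed at all. Your two maps (deleting the leading zero when $a>m$; collapsing the plateau $m^{t+1}$ to $m$ when $a=m$) are correct and give
\[
|\IS_{n,t}(102,012)|=|\IS_{n-1,t-1}(102,012)|+|\IS_{n-t,0}(102,012)|\qquad(1\le t\le n-2),
\]
while $|\IS_{n,n-1}(102,012)|=1$ (the all-zero sequence; note the recursion does fail there, since $m=0$). Writing $g_k:=|\IS_{k,0}(102,012)|$ and summing the recursion over $1\le t\le n-2$, Yan--Lin's total $F_{2n-1}$ yields
\[
F_{2n-1}=g_n+\Bigl(\textstyle\sum_{s=0}^{n-3}|\IS_{n-1,s}|\Bigr)+\sum_{k=2}^{n-1}g_k+1
=g_n+(F_{2n-3}-1)+\sum_{k=2}^{n-1}g_k+1,
\]
so $\sum_{k=2}^{n}g_k=F_{2n-2}$ and hence $g_n=F_{2n-2}-F_{2n-4}=F_{2n-3}$. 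Iterating the recursion $t$ times then gives $|\IS_{n,t}(102,012)|=(t+1)g_{n-t}=(t+1)F_{2n-2t-3}$. In other words, your ``alternative'' is in fact a complete and arguably cleaner proof than the paper's tiling argument; you stopped one telescoping step short of seeing that the rank-$0$ base case is not a separate crux but a consequence of the recursion together with the known total $F_{2n-1}$. If you prefer the direct bijective route instead, you must actually write down and invert the suffix encoding, being careful that later suffix entries may again equal $m$ when the first suffix entry is $0$ — this is exactly the case split ($e_{m+t+2}=0$ with a later $m$, or not) that forces the paper into the two boards of lengths $2n-2t-6$ and $2n-2t-5$.
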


\begin{proof}
It is straightforward to verify that \( e= (e_1, e_2, \dots, e_n)  \in \IS_n(102,012) \) 
if and only if \( e \) is of the form
\[
m = e_{i_1} \geq e_{i_2} \geq \cdots \geq e_{i_\ell} \geq 1 \quad \text{for some } m < i_1 \leq i_2 \leq \cdots \leq i_\ell,
\]
with \( e_j = 0 \) for all other indices. Here, \( m = \max(e) \).

We define a map \( \varphi: \IS_n(102,012) \to \mathcal{T}_{2n-2} \), where \( \mathcal{T}_{2n-2} \) denotes the set of tilings of a \(1 \times (2n - 2)\) board with squares and dominoes. 

We set \( \varphi((0, 0, \dots, 0)) := D^{n-1} \), which corresponds to a tiling composed entirely of dominoes.

For an inversion sequence \( e \) with \( \max(e) = m > 0 \), let \( e_{i_1}, e_{i_2}, \dots, e_{i_\ell} \) be the nonzero entries of \( e \), forming a weakly decreasing sequence. For convenience, we set 
\( b_j := e_{i_j} - e_{i_{j+1}} \) for \( 1 \leq j \leq \ell-1 \), and \( b_{\ell}:=e_{i_\ell}-1 \).

Now define 
\[
\varphi(e) := \xi(e_{m+1})\, \xi(e_{m+2})\, \dots\, \xi(e_n),
\]
where
\[
\xi(e_k) :=
\begin{cases}
S D^{b_j} S & \mbox{ \( k=i_j \) for some \( j \)}, \\
D & \mbox{otherwise}.
\end{cases}
\]
For example, when \( n = 3 \), the following correspondences hold:
\[
000 \mapsto DD, \quad
001 \mapsto DSS, \quad
011 \mapsto SSSS, \quad
010 \mapsto SSD, \quad
002 \mapsto SDS.
\]
In general, the resulting tiling \( \varphi(e) \) consists of \( 2\ell \) squares and \( n - \ell - 1 \) dominoes, 
totaling \( 2n - 2 \) units in length, as desired. Hence, it is easy to recover \( \ell, m \), and 
the original sequence from the tiling, confirming that the map is invertible.

Next, we examine how the rank of \( e \) corresponds to the structure of the tiling \( \varphi(e) \). 

If \( \rank(e) = t \) with \( 0 \leq t < n - 2 \), then we must have \( m = \max(e) > 1 \), and the entries 
in positions from \( m+1 \) to \( m + t + 1 ~(= \prmx(e)) \) take the form
\[
(e_{m+1}, e_{m+2}, \dots, e_{m+t+1}) = (0^i, m^{t - i + 1}),
\]
for some \( 0 \leq i \leq t \), with the condition that \( e_{m + t + 2} < m \).
Under the bijection \( \varphi \), this corresponds to a tiling whose initial segment is of the form
\[
D^i S^{2t - 2i + 2} D \quad \text{or} \quad D^i S^{2t - 2i + 1} D.
\]
The former case arises when \( e_{m+t+2} = 0 \) and there is another occurrence of \( m \) 
at some later position \( k > m + t + 2 \). 
These two types of initial segments account for all possible configurations of inversion sequences with 
rank \( t<n-2 \), 
and their contributions sum to
\[
(t+1)(F_{2n - 2t - 5} + F_{2n - 2t - 4}) = (t+1)F_{2n - 2t - 3},
\]
as desired. Here, the factor \( t+1 \) corresponds to the number of possible choices for \( i \), and 
\( F_{2n - 2t - 5}  \) (respectively \( F_{2n - 2t - 4} \)) counts the number of tilings of 
a \( 1 \times (2n-2t-6) \) board (respectively \( 1 \times (2n-2t-5) \) board).

For the case \( t = n - 2 \), the inversion sequences are precisely those of the form \( (0^{n-i}, 1^{i}) \) for some \( 1 \leq i \leq n - 1 \). 
Thus, they correspond to tilings of the form 
\( D^{n - i - 1} S^{2i} \),
and the total number of such tilings is \( n - 1 \). This agrees with the value of 
\( (t+1) F_{2n - 2t - 3} \)
when \( t = n - 2 \), since \( F_1 = 1 \). The proof is complete.
\end{proof}

%-----------------------------------------------------------------------------

\subsection{(102, 021)-avoiding inversion sequences}

A \emph{Dyck path of semilength \( n \)} is a UVD path of semilength \( n \) 
that consists only of up steps \( u \) and down steps \( d \), with no vertical steps.
The following is a well-known result on Dyck paths. 

\begin{lem}\label{lem:Dyck_end}
Let \( k \) and \( n \) be positive integers with \( k\leq n \). The number of Dyck paths of semilength \( n \) 
ending with the subpath \( ud^k \) is equal to the number of Dyck paths \( P \) of semilength \( n \) 
with \( k \) returns. The number is given by
\[
\frac{k}{n}\binom{2n-k-1}{n-1}.
\]
\end{lem}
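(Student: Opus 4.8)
The plan is to prove the two enumerative claims in Lemma~\ref{lem:Dyck_end} separately and then combine them. The statement asserts three equal quantities: the number of Dyck paths of semilength $n$ ending in $ud^k$, the number of Dyck paths of semilength $n$ with exactly $k$ returns, and the closed form $\frac{k}{n}\binom{2n-k-1}{n-1}$. I would establish the first equality by a bijection and the second (the formula) by a generating-function or Lagrange-inversion computation, mirroring the technique already used in Section~\ref{sec:102-with-rank}.

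For the equality of the two counts, first I would make the notion of \emph{return} precise (a down step $d$ touching the $x$-axis, as already defined for UVD paths), and observe that a Dyck path with $k$ returns decomposes uniquely as a concatenation $P_1 P_2 \cdots P_k$ of $k$ \emph{prime} Dyck paths, each of the form $u P_i' d$ with $P_i'$ a (possibly empty) Dyck path staying strictly above the axis in its interior. The plan is to build an involution-style bijection that redistributes these returns into a tail of the form $ud^k$: the cleanest route is to send $P = P_1 \cdots P_k$ to the path obtained by a standard ``first-return'' transfer, peeling each of the $k$ primes and stacking the final down steps so that the image ends in $u d^k$ while remaining a Dyck path of the same semilength. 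I would verify that this map is a bijection by exhibiting the inverse, which reads off the terminal block $u d^k$ and redistributes it back into $k$ returns.

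For the closed form, I would set up the generating function for Dyck paths by first return. Letting $C(x) = \sum_{n \ge 0} C_n x^n$ be the Catalan generating function, the usual decomposition gives $C = 1 + x C^2$, and a prime Dyck path has generating function $x C$. Since a path with $k$ returns is an ordered sequence of exactly $k$ primes, its generating function is $(xC)^k$, so the number of Dyck paths of semilength $n$ with $k$ returns is $[x^n](xC)^k = [x^{n-k}] C^k$. The final step is to extract this coefficient: because $C$ satisfies $C = 1 + xC^2$, I would apply the Lagrange inversion formula exactly as in equation~\eqref{eqn:formE} to obtain
\[
[x^{n-k}] C^k = \frac{k}{n}\binom{2n-k-1}{n-1},
\]
which is the asserted value.

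The main obstacle I anticipate is \textbf{not} the formula extraction—that is a routine Lagrange inversion nearly identical to the one already carried out with $E(y)$—but rather giving a clean, demonstrably bijective map for the first equality. The subtlety is that ``ending in $ud^k$'' and ``having $k$ returns'' are genuinely different local/global conditions, and a naive cyclic shift does not preserve the Dyck (nonnegativity) constraint. I would therefore either present the explicit prime-decomposition bijection described above with a carefully stated inverse, or, if that proves awkward, sidestep it entirely by computing the generating function for paths ending in $u d^k$ directly (such a path is $Q\, u\, d^k$ where the removed tail forces a specific decomposition) and checking it equals $(xC)^k$; since both interpretations are known classical Catalan statistics, I expect either approach to close cleanly, and I would choose whichever yields the shorter verification.
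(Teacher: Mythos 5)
Your proposal is essentially correct, but note that the paper does not prove Lemma~\ref{lem:Dyck_end} at all: it is invoked as a well-known fact with a pointer to the OEIS, so there is no internal argument to compare against. Your two-part plan supplies the standard proof and closes cleanly. For the bijection, the ``prime decomposition'' you describe is the right mechanism once made precise: a path with $k$ returns factors uniquely as $uP_1'd\,uP_2'd\cdots uP_k'd$, and the map to $P_1'\,u\,P_2'\,u\cdots P_k'\,u\,d^k$ lands on exactly the Dyck paths of semilength $n$ ending in $ud^k$ (the inverse is the last-passage decomposition of the prefix preceding the terminal $ud^k$ by the last up step leaving each level); alternatively, both families visibly have generating function $(xC)^k$, which is your fallback and is arguably the shortest route. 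For the closed form, your coefficient extraction is correct but is applied to $C=1+xC^2$ rather than to the paper's equation~\eqref{eqn:formE}, so it is analogous rather than identical: writing $B=C-1=x(1+B)^2$ and using Lagrange inversion with $H(z)=(1+z)^k$ gives
\[
[x^{\,n-k}]C^k=\frac{k}{n-k}\binom{2n-k-1}{n-k-1}=\frac{k}{n}\binom{2n-k-1}{n-1},
\]
where the last step is a one-line algebraic identity (both sides equal $k\,(2n-k-1)!/(n!\,(n-k)!)$, the ballot number $\frac{k}{2n-k}\binom{2n-k}{n}$). Including that small verification is the only detail your sketch omits.
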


The sequence in Lemma~\ref{lem:Dyck_end} appears in OEIS~\cite{OEIS} as entry A33184. 
We now apply Lemma~\ref{lem:Dyck_end} to enumerate 
\( (102,021) \)-avoiding inversion sequences according to their rank.

\begin{prop}\label{prop:102021}
For integers \(n \geq 2\) and \(0 \leq t \leq n-2\), the number of 
\((102,021)\)-avoiding inversion sequences \(e\) of length \( n \) with \(\rank(e) = t\) is given by
\[
(t+1)\left( 2^{n-t-2} -(n-t-1) +\sum_{m=1}^{n-t-1} \frac{1}{m+t+1}\binom{2m+t}{m} \right).
\]
\end{prop}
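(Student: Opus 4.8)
The plan is to first obtain an explicit structural description of $(102,021)$-avoiding inversion sequences, analogous to the descriptions used in the preceding propositions. I would determine exactly which shapes such sequences can take: avoiding $102$ forces the characteristic ``rise then weakly decrease'' behavior already exploited above, while additionally avoiding $021$ rules out any pattern in which a value is followed by a smaller value that is in turn followed by a value strictly between them. I expect that this combined condition forces the initial ascending run to consist of the consecutive values $0,1,2,\dots,m$ (so that $e_i=i-1$ up to the maximum $m$), followed by a tail whose structure, once the maximal entries are accounted for, is governed by a Dyck-path-like (or more precisely a weakly decreasing, Catalan-flavored) constraint. The goal is to isolate, within a sequence of fixed rank $t$, a free part whose count is exactly the quantity inside the parentheses of the claimed formula, with the leading factor $t+1$ arising from the $t+1$ choices of how the maximal value $m$ is distributed across the positions $\prmx(e)-t,\dots,\prmx(e)$ (matching the role of the factor $t+1$ in Propositions~\ref{prop:102012} and the earlier rank enumerations).

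Next I would set up the counting by decomposing the formula into its three summands. The term $2^{n-t-2}$ should correspond, via the description in \eqref{eq:smsw}-type reasoning, to the $(102,001)$-avoiding count of Proposition~\ref{prop:102001}; indeed the three pieces suggest an inclusion–exclusion or a partition of the $021$-avoiding sequences according to whether the tail is ``strictly decreasing after the plateau'' (giving the $2^{n-t-2}$ and the correction $-(n-t-1)$) versus admitting genuine Dyck-path structure (giving the sum $\sum_{m=1}^{n-t-1}\frac{1}{m+t+1}\binom{2m+t}{m}$). The key step is to identify the weakly decreasing tail subject to the $021$ constraint with Dyck paths of a controlled semilength ending in a prescribed number of returns, so that Lemma~\ref{lem:Dyck_end} can be applied directly. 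Concretely, for each value of $m$ (the number of distinct descending levels or the length of the relevant free segment), I would match configurations to Dyck paths of semilength $m+t+1$ with $t+1$ returns, whose count $\frac{t+1}{m+t+1}\binom{2(m+t+1)-(t+1)-1}{m+t}=\frac{t+1}{m+t+1}\binom{2m+t}{m+t}=(t+1)\cdot\frac{1}{m+t+1}\binom{2m+t}{m}$ produces the summand together with its $(t+1)$ prefactor.

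I would then verify the two correction terms. The $2^{n-t-2}$ arises exactly as in Proposition~\ref{prop:102001}, since every $(102,001)$-avoiding sequence is $021$-avoiding when its tail is (weakly) decreasing with no intervening ``valley-then-middle'' obstruction; the subtraction $-(n-t-1)$ removes the overcounted boundary cases where the Dyck-path interpretation and the $2^{n-t-2}$ interpretation coincide (typically the ``all-dominoes'' or ``single-plateau'' degenerate configurations, one for each admissible $m$ from $1$ to $n-t-1$). I would check this overlap by direct comparison of the two parametrizations on the degenerate sequences and confirm the count of coincidences is precisely $n-t-1$.

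The main obstacle I anticipate is the structural lemma itself: proving cleanly that the $021$-avoidance constraint on the tail is equivalent to the Dyck-path (nonnegativity/return) condition, and correctly tracking the rank $t$ through the bijection so that the number of returns is pinned to $t+1$ and the semilength to $m+t+1$. Getting the boundary bookkeeping right—so that the three summands assemble into the stated closed form without off-by-one errors in the binomial arguments or the range of $m$—will require care; in particular I would double-check the edge case $t=n-2$ (where the sum collapses and the formula must reduce to the trivially enumerable family) and the case where the tail is empty, to confirm the correction $-(n-t-1)$ has the right sign and magnitude.
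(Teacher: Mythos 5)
Your overall plan (a structural characterization, a two-case split, and an application of Lemma~\ref{lem:Dyck_end} with semilength $m+t+1$ and $t+1$ returns) points in the right direction, and your arithmetic $\frac{t+1}{m+t+1}\binom{2m+t}{m}$ is the correct specialization of that lemma. However, the structural lemma you propose is wrong, and it is the load-bearing step. Avoiding $102$ and $021$ does \emph{not} force the initial ascending run to be the consecutive values $0,1,\dots,m$: for instance $(0,0,2)\in\IS_3(102,021)$. The correct characterization is that either (i) $e_1\le e_2\le\cdots\le e_{m+t+1}=m$ is an \emph{arbitrary} weakly increasing prefix (subject only to $e_i\le i-1$) followed by all zeros, or (ii) $e$ consists of a block of zeros, then a plateau of $m$'s ending at position $m+t+1$, then a $0$, then entries in $\{0,m\}$ with at least one further $m$. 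It is the weakly increasing \emph{prefix} in case (i) that is in bijection with Dyck paths of semilength $m+t+1$ ending with $ud^{t+1}$, not the tail as you suggest. The tail after the maximum can never carry Dyck-path structure: since $e_1=0$ and $e_{\prmx(e)}=m$, any later entry strictly between $0$ and $m$ creates a $021$, so the tail is confined to $\{0,m\}$ (and to $\{0\}$ in case (i), by $102$-avoidance). With the roles reversed as in your write-up, the family of prefixes collapses and the Catalan-type sum does not appear.

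Your reading of $2^{n-t-2}-(n-t-1)$ as an inclusion--exclusion correction against the $(102,001)$ count of Proposition~\ref{prop:102001} is also not how these terms arise, and I do not see how to make that route work: the $(102,001)$-avoiding sequences have a different shape ($e_i=i-1$ up to the maximum) and are not the relevant subfamily here. In the correct decomposition these terms come entirely from case (ii): for each $m$ with $1\le m\le n-t-2$ there are $t+1$ choices for the number of leading zeros and $2^{n-t-m-2}-1$ choices for the entries $e_{m+t+3},\dots,e_n\in\{0,m\}$ (excluding the all-zero assignment), and
\[
\sum_{m=1}^{n-t-2}\left(2^{n-t-m-2}-1\right)=2^{n-t-2}-(n-t-1).
\]
So the missing ingredients are the correct structure theorem and the identification of which subfamily each summand counts; once those are fixed, the remainder of your outline (Lemma~\ref{lem:Dyck_end}, the factor $t+1$, and the boundary check at $t=n-2$) goes through.
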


\begin{proof}
We begin by considering inversion sequences \( e=(e_1, e_2, \dots, e_n) \) with \( \max(e)=m>0 \). 

It is straightforward to verify that \( e \in \IS_{n,t}(102,021) \)  
if and only if it is of one of the following two forms:  
\begin{align}
\label{eq:102021_1}
& e_1 \leq e_2 \leq \cdots \leq e_{m+t+1} = m, 
\quad e_{m+t+2} = e_{m+t+3} = \cdots = e_n = 0, \\[2ex]
\label{eq:102021_2}
& e_1 = \cdots = e_{m+k} = 0, 
\quad e_{m+k+1} = \cdots = e_{m+t+1} = m, 
\quad e_{m+t+2} = 0, \notag \\
& \quad e_{m+t+3}, \dots, e_n \in \{0, m\} \text{ with at least one entry equal to \( m \)},
\end{align}
for some \( 0 \leq k \leq t \).

In the first case~\eqref{eq:102021_1}, the sequences are in bijection with Dyck paths of semilength 
\( m+t+1 \) ending with the subpath \( ud^{t+1} \), which are counted by 
\[
\frac{t+1}{m+t+1}\binom{2m+t}{m}
\]
by Lemma~\ref{lem:Dyck_end}.

For the second case~\eqref{eq:102021_2}, the sequences are counted by 
\( (t+1)2^{n-t-m-2} \). Here, the factor \( t+1 \) counts the number of possible choices for \( k \), and 
\( 2^{n-t-m-2} -1\) counts the number of possible assignments for \( e_{m+t+3}, \dots, e_{n} \),
each of which can be either \( 0 \) or \( m \) excluding the case with \( e_{m+t+2}=\cdots=e_n=0 \).
Hence, the total number of inversion sequences \( e \in \IS_{n,t}(102,021) \) is given by
\[
(t+1) \left( \sum_{m=1}^{n-t-1} \frac{1}{m+t+1}\binom{2m+t}{m}+\sum_{m=1}^{n-t-2} (2^{n-t-m-2}-1)  \right),
\]
which completes the proof.
\end{proof}

%-----------------------------------------------------------------------------

\subsection{(102, 120)-avoiding inversion sequences}

Yan and Lin~\cite[Theorem~4.10]{YanLin20} showed that 
the total number of \((102,120)\)-avoiding inversion sequences of length \(n\) is given by
\begin{equation}\label{eq:YL120}
|\IS_{n}(102,120)| = 1+\sum_{i=1}^{n-1}\binom{2i}{i-1}.
\end{equation}

We first establish a lemma that characterizes a subclass of 
\( (102,120) \)-avoiding inversion sequences.

\begin{lem}\label{lem:102120}
Let \( n\geq 2 \) and \( 0\leq t \leq n-2 \), and define
\[ 
A_{n,t}:= \{ e\in \IS_{n,t}(102,120)  : e_{\max(e)+t}<\max(e)\}.
\]
Then the following statements hold:
\begin{enumerate}
\item For integers \( n \) and \( t \) with \( n \geq 2 \) and \( 1\leq t \leq n-1 \), 
\[ 
|A_{n+1,t}|=\sum_{i=t-1}^{n-2}|A_{n,i}| .
\]
\item For integers \( n \) and \( t \) with \( n\geq 2 \) and \( 0\leq t \leq n-2 \),
\( |A_{n,t}|=\binom{2n-t-3}{n-1} \).
\end{enumerate}
\end{lem}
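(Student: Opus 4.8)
The plan is to first pin down the exact shape of the sequences counted by $A_{n,t}$ and then read off both statements from that description: (1) as a decomposition and (2) as a count. Fix $e\in\IS_{n,t}(102,120)$ and write $m=\max(e)$, $p=\prmx(e)$. Recall from the definition of rank that $102$-avoidance already gives $e_p=m$ and $p=m+t+1$, so the defining inequality $e_{\max(e)+t}=e_{p-1}<m$ of $A_{n,t}$ says exactly that $e$ makes a \emph{strict} ascent into its first maximum. I would prove that $e\in A_{n,t}$ if and only if $e$ splits as a weakly increasing prefix
\[
0=e_1\le e_2\le\cdots\le e_{p-1}=v<e_p=m,\qquad p=m+t+1,
\]
followed by a (possibly empty) tail $e_{p+1}\cdots e_n$ that is a word over $\{v,v+1,\dots,m\}$, beginning (if nonempty) with a letter $<m$, and avoiding both $102$ and $120$.

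\emph{Proof of the characterization.} The prefix is forced to be weakly increasing since $p$ is the first descent, and $e_p=m$, $e_{p-1}=v<m$. Applying $120$-avoidance to the strict ascent $e_{p-1}<e_p$ shows that no later entry can be $<v$ (such an entry $e_k$ would give $e_k<v<m$, i.e.\ the triple $(p-1,p,k)$ reduces to $120$), while every entry is automatically $\le m$; the tail must additionally avoid both patterns internally. Conversely, given such a prefix--tail pair, all prefix entries are $\le v$ and all tail entries are $\ge v$, so any offending triple for $102$ or $120$ must lie entirely in the tail, where it is excluded by hypothesis; thus $e$ avoids both patterns. This is the technical heart, and getting the two inequalities and the converse exactly right is where care is needed.

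\emph{The two statements.} The cleanest way to organize the rest is to exhibit a bijection between $A_{n,t}$ and lattice paths from $(0,0)$ to $(n-1,\,n-2-t)$ using unit east and north steps, encoding the prefix together with the recursively described tail. Granting such a bijection, (2) is immediate, since the number of these paths is $\binom{2n-t-3}{n-1}$; and (1) becomes the decomposition of a path to $(n,\,n-1-t)$ according to the height $y$ of its last east step: the initial portion is a path to $(n-1,y)$, counted by $A_{n,i}$ with $i=n-2-y$, and as $y$ ranges over $[0,n-1-t]$ the rank $i$ sweeps the whole interval $[t-1,n-2]$. The same recurrence has a direct sequence-level incarnation: delete $e_1=0$; when the initial staircase ($e_j=j-1$ for $j\le s$) has length $s=1$ this already yields an inversion sequence of rank $t-1$, and when $s\ge2$ one deletes $e_1$ while lowering the top of the sequence, the amount of collapse being governed by $s$ so that the image rank ranges over $[t-1,n-2]$. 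I expect \emph{this} to be the main obstacle: one must verify that the image is a genuine inversion sequence, still avoids $102$ and $120$, still ascends strictly into its maximum, and that the map inverts uniquely with the claimed target rank.

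\emph{Alternative for (2) via induction.} If one proves (1) first and wants (2) from it, I would induct on $n$ using the hockey-stick identity $\sum_{j=r}^{N}\binom{j}{r}=\binom{N+1}{r+1}$. The base $n=2$ is $A_{2,0}=\{01\}$, giving $1=\binom{1}{1}$. For $t\ge1$, statement (1) and the inductive hypothesis give
\[
|A_{n+1,t}|=\sum_{i=t-1}^{n-2}\binom{2n-i-3}{n-1}=\sum_{j=n-1}^{2n-t-2}\binom{j}{n-1}=\binom{2n-t-1}{n}=\binom{2(n+1)-t-3}{(n+1)-1}.
\]
Since the recurrence does not reach $t=0$, that case must be handled separately; but for $t=0$ the defining inequality $e_{p-1}=e_m<m$ is automatic (an inversion sequence satisfies $e_m\le m-1$), so $A_{n,0}=\IS_{n,0}(102,120)$, and I would count these rank-$0$ sequences directly—most transparently via the lattice-path bijection restricted to $t=0$—to obtain $\binom{2n-3}{n-1}$, closing the induction.
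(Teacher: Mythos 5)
Your structural characterization of $A_{n,t}$ (weakly increasing prefix ending in a strict ascent $v<m$ into the first maximum at position $m+t+1$, followed by a tail over $\{v,\dots,m\}$ starting below $m$) is correct and is a sound starting point, and your inductive skeleton for (2) -- base case, hockey stick for $t\ge 1$, separate treatment of $t=0$ -- matches the paper's. But the enumerative core of the argument is missing. Everything is made to rest on a bijection between $A_{n,t}$ and monotone lattice paths to $(n-1,\,n-2-t)$ that you never construct; for statement (2) this is circular, since exhibiting that bijection is exactly equivalent to proving $|A_{n,t}|=\binom{2n-t-3}{n-1}$, and for statement (1) the path-decomposition argument has no content until the bijection exists. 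The one concrete map you do sketch -- delete $e_1=0$ and ``lower the top, the amount of collapse being governed by $s$'' -- is not well defined, and its naive version fails: for $e=(0,1,1,1,3,1)\in A_{6,1}$, deleting $e_1$ gives $(1,1,1,3,1)$, which is not an inversion sequence since its first entry is $1$. No rule depending only on the staircase length $s$ is specified that repairs this, verifies pattern avoidance, or controls the image rank.

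For comparison, the paper's proof of (1) uses a different and fully explicit map: it deletes the entry $e_{m+t}$ (the one \emph{immediately before} the first maximum, not $e_1$) and subtracts $a=e_{m+t}-e_{m+t-1}\ge 0$ from every entry of the suffix $e_{m+t+1},\dots,e_{n+1}$, landing in $A_{n,t+a-1}$; the inverse reinserts $f_{k-2}+a$ and re-raises the suffix, with $k$ recovered from $\max$ and $\rank$ of the image. For the $t=0$ case of (2), rather than appealing to an unproved bijection, the paper uses Yan--Lin's closed form $|\IS_{n}(102,120)|=1+\sum_{i=1}^{n-1}\binom{2i}{i-1}$ together with the observation that sequences with $e_{\max(e)+\rank(e)}=\max(e)$ biject with $\IS_n(102,120)$ by deleting that entry, giving $|\IS_{n+1}(102,120)|-|\IS_n(102,120)|=\binom{2n}{n-1}=\sum_{t=0}^{n-1}|A_{n+1,t}|$, from which $|A_{n+1,0}|$ follows by subtraction. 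To complete your proof you would need either to supply the lattice-path bijection explicitly or to replace your deletion sketch with a precise, invertible map such as the paper's $\rho$, and to find an independent count for the $t=0$ case.
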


\begin{proof}
\begin{enumerate}
\item Let \( e=(e_1, e_2, \dots, e_{n+1}) \in A_{n+1,t} \) with \( \max(e)=m>0 \).
Since \( e \) avoids the pattern \( 120 \) and \( e_{m+t}<e_{m+t+1}=m \), the sequence satisfies 
\( e_1\leq e_2 \leq \cdots \leq e_{m+t} \) and \( e_{m+t} \leq e_{j} \) for \( m+t+2 \leq j \leq n+1 \). 

We define a map
\[ 
\rho : A_{n+1,t} \rightarrow \bigcup_{a= 0}^{n-t-1} A_{n,t+a-1}
\]
by \( \rho(e):= (e_1, \dots, e_{m+t-1}, e_{m+t+1}-a, e_{m+t+2}-a, \dots, e_{n+1}-a) \), 
where \( a=e_{m+t}-e_{m+t-1} \geq 0 \).
Since \( e_{m+t+1}-a=e_{m+t+1}-e_{m+t}+e_{m+t-1}> e_{m+t-1} \),
it follows that \( \max(\rho(e))=e_{m+t+1}-a=m-a \) and \( \rank(\rho(e))=t+a-1 \). 
Moreover, \( \rho(e) \) preserves the weakly increasing property up to index \( m+t-1 \),
and the suffix beginning at \( e_{m+t+1}-a \) avoids the patterns \( 102 \) and \( 120 \), 
since this property is inherited from \( e \).
Thus, \( \rho(e) \in A_{n,t+a-1} \) for \( 0 \leq a \leq n-t-1 \). 

Conversely, given \( 1\leq t \leq n-2 \) and any \( \rho(e)=(f_1, f_2, \dots, f_n) \in A_{n,t+a-1} \), 
we can recover the original sequence \( e \in A_{n+1,t} \) as 
\[ 
(f_1, f_2, \dots, f_{k-2}, f_{k-2}+a, f_{k-1}+a, \dots , f_{n}+a ),
\] 
where  \( k=\max(\rho(e))+\rank(\rho(e))+2 \). This shows that \( \rho \) is a bijection.
\item We prove that \( |A_{n,t}|=\binom{2n-t-3}{n-1} \) for all \( 0 \leq t \leq n-2 \) by induction on \( n\geq 2 \).
The base case holds since \( |A_{2,0}|=|\{(0,1)\}|=\binom{1}{1} \). 
Now assume the statement holds for some \( n \geq 2 \); we aim to prove it for \( n+1 \).
By part (1) and the induction hypothesis, for \( 1 \leq t \leq n-1 \), we have
\begin{equation}\label{eq:A_n+1_t}
|A_{n+1,t}|=\sum_{i=t-1}^{n-2}|A_{n,i}|=\sum_{i=t-1}^{n-2}\binom{2n-i-3}{n-1}=\binom{2n-t-1}{n}
=\binom{2(n+1)-t-3}{(n+1)-1}.
\end{equation}
It remains to compute \( |A_{n+1,0}| \). 
To compute \( |A_{n+1,0}| \), observe that any sequence \( e \in \IS_{n+1}(102,120) \) 
satisfying \( e_{\max(e)+\rank(e)} = \max(e) \) corresponds bijectively to a sequence 
in \( \IS_n(102,120) \) by simply removing the entry \( e_{\max(e)+\rank(e)} \), and vice versa.
Thus,
\[
|\{ e \in \IS_{n+1}(102,120) : e_{\max(e)+\rank(e)}=\max(e)\} |=|\IS_{n}(102,120)|.
\]
Hence, the number of sequences in \( \IS_{n+1}(102,120) \) with \( e_{\max(e)+\rank(e)}<\max(e) \) 
is given by
\[
|\IS_{n+1}(102,120)|-|\IS_{n}(102,120)|=\left(1+\sum_{i=1}^{n}\binom{2i}{i-1} \right)
-\left( 1+\sum_{i=1}^{n-1}\binom{2i}{i-1}\right) =\binom{2n}{n-1}
\]
by \eqref{eq:YL120}.
Therefore, by \eqref{eq:A_n+1_t}, 
\[
 |A_{n+1,0}|=\binom{2n}{n-1}-\sum_{t=1}^{n-1}{|A_{n+1,t}|}
=\binom{2n}{n-1}-\sum_{t=1}^{n-1}{\binom{2n-t-1}{n}}
=\binom{2(n+1)-3}{(n+1)-1},
\]
which completes the inductive step.
\end{enumerate}
\end{proof}

\begin{prop}\label{prop:101120}
For integers \(n \geq 2\) and \(0 \leq t \leq n-2\), the number of 
\((102,120)\)-avoiding inversion sequences \(e\) of length \( n \) with \(\rank(e) = t\) is given by
\[
\binom{2n-t-2}{n-t-1}-\binom{2n-2t-3}{n-t-1}.
\]
\end{prop}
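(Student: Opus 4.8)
The plan is to enumerate $\IS_{n,t}(102,120)$ by splitting it according to the value of the entry sitting immediately before the peak, and thereby reduce the count to the quantity $|A_{n,t}|=\binom{2n-t-3}{n-1}$ already computed in Lemma~\ref{lem:102120}(2). First I would record the basic structural facts: for $e\in\IS_{n,t}(102,120)$ with $0\le t\le n-2$ one has $\max(e)=m\ge 1$ (the all-zero sequence has rank $n-1$) and $\prmx(e)=m+t+1\le n$, so the index $m+t$ is a genuine position of $e$ with $e_{m+t}\le e_{\prmx(e)}=m$. This gives a disjoint, exhaustive decomposition
\[
\IS_{n,t}(102,120)=A_{n,t}\sqcup B_{n,t},
\]
where $A_{n,t}$ is the set from Lemma~\ref{lem:102120} (the case $e_{m+t}<m$) and $B_{n,t}:=\{\,e\in\IS_{n,t}(102,120):e_{m+t}=m\,\}$. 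Since $e_m\le m-1<m$ always, the set $B_{n,0}$ is empty, which already matches the claim that all rank-zero sequences lie in $A_{n,0}$.

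Next I would analyze $B_{n,t}$ for $t\ge 1$ using the deletion map from the proof of Lemma~\ref{lem:102120}(2): remove the repeated maximum $e_{m+t}$ sitting just before the peak. A short verification shows that deletion sends a sequence of length $n$ and rank $t$ to one of length $n-1$ and rank $t-1$ — the maximum $m$ is preserved, and the peak shifts back by one position so that $\prmx$ drops from $m+t+1$ to $m+t$ — and that the inverse simply re-inserts a copy of $m$ just before the peak (a legal entry since $t\ge 1$ forces $m\le m+t-1$). Hence $|B_{n,t}|=|\IS_{n-1,t-1}(102,120)|$. Writing $c_{n,t}:=|\IS_{n,t}(102,120)|$, the decomposition yields the recursion
\[
c_{n,t}=\binom{2n-t-3}{n-1}+c_{n-1,t-1}\quad(t\ge 1),\qquad c_{n,0}=\binom{2n-3}{n-1}.
\]

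Unrolling this recursion down to rank $0$ telescopes $c_{n,t}$ into a single sum of known $A$-values,
\[
c_{n,t}=\sum_{j=0}^{t}|A_{n-j,t-j}|=\sum_{j=0}^{t}\binom{2n-t-j-3}{\,n-j-1\,},
\]
where one checks that every term stays within the valid range $n-j\ge 2$, $0\le t-j\le (n-j)-2$. The closing step is a routine binomial simplification: rewriting each summand with the complementary lower index, $\binom{r}{n-j-1}=\binom{r}{n-t-2}$ for $r=2n-t-j-3$, turns the sum into the consecutive run $\sum_{r=2n-2t-3}^{2n-t-3}\binom{r}{n-t-2}$, and the hockey-stick identity collapses it to
\[
\binom{2n-t-2}{n-t-1}-\binom{2n-2t-3}{n-t-1},
\]
as claimed; the base case $t=0$ agrees since $\binom{2n-2}{n-1}-\binom{2n-3}{n-1}=\binom{2n-3}{n-1}$. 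I expect the main obstacle to be justifying the recursion cleanly rather than the final summation: one must confirm that the split into $A_{n,t}$ and $B_{n,t}$ is genuinely disjoint and exhaustive, and that the deletion/insertion map really is a well-defined \emph{rank-lowering} bijection that respects $120$-avoidance. Once the recursion is secured, the closed form is a standard hockey-stick computation.
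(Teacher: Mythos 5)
Your proof is correct and follows essentially the same route as the paper: both reduce the count to $\sum_{i=0}^{t}|A_{n-i,t-i}|=\sum_{i=0}^{t}\binom{2n-t-i-3}{n-t-2}$ via Lemma~\ref{lem:102120}(2) by stripping the repeated copies of $\max(e)$ immediately preceding the peak, and then close with the hockey-stick identity. The only difference is presentational: you peel off one copy of $\max(e)$ at a time and telescope the resulting recursion, while the paper partitions $\IS_{n,t}(102,120)$ directly by the number $i$ of such copies and removes them all at once.
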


\begin{proof}
Since
\[
\sum_{i=0}^{t} \binom{2n-t-i-3}{n-t-2}=\binom{2n-t-2}{n-t-1}-\binom{2n-2t-3}{n-t-1}
\]
it suffices to show that, for each \( i \), the inversion sequences 
\( e=(e_1, e_2, \dots, e_n) \in \IS_{n,t}(102,120) \) with \( t \leq n-2 \) satisfying 
\[ 
e_1, e_2, \dots, e_{\max(e)+t-i}<e_{\max(e)+t-i+1}=\cdots = e_{\max(e)+t+1}=\max(e) 
\] 
is counted by \( \binom{2n-t-i-3}{n-t-2} \). Clearly, these sequences are in bijection with 
the \( (102,120) \)-avoiding inversion sequences in \( A_{n-i, t-i} \) by removing \( i \) consecutive \( \max(e) \) elements.
By Lemma~\ref{lem:102120}~(2), we obtain
\[ 
|A_{n-i, t-i}|=\binom{2(n-i)-(t-i)-3}{(n-i)-1}=\binom{2n-t-i-3}{n-t-2}. 
\] 
This completes the proof. 
\end{proof}

%-----------------------------------------------------------------------------

\subsection{(102, 201)-avoiding inversion sequences}

First, we enumerate \( (102, 201, 101) \)-avoiding inversion sequences \( e \) 
with \( \rank(e)=t \) and \( \max(e)=m \).

\begin{prop}\label{prop:101102201rank}
Let  \( m \) and \( n \) be positive integers. 
For \( 0\leq t \leq n-2 \) and \( 1\leq m \leq n-t-1 \), the number of inversion sequences \( e \in \IS_{n}(102,201,101) \) 
with \( \rank(e)=t \) and \( \max(e)=m \) is given by
\[
\frac{t+1}{m+t+1} \binom{2m+t}{m}\binom{n-t-2}{m-1}. 
\]
\end{prop}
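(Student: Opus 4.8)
We want to count inversion sequences $e \in \IS_n(102,201,101)$ with $\rank(e)=t$ and $\max(e)=m$, and show this equals $\frac{t+1}{m+t+1}\binom{2m+t}{m}\binom{n-t-2}{m-1}$. The shape of the answer is suggestive: the factor $\frac{t+1}{m+t+1}\binom{2m+t}{m}$ is exactly the count from Lemma~\ref{lem:Dyck_end} of Dyck paths of semilength $m+t+1$ ending in $ud^{t+1}$ (equivalently with $t+1$ returns), while $\binom{n-t-2}{m-1}$ is a simple binomial choosing positions. So the strategy is to show the sequences factor as a product of two independent pieces, one counted by a Dyck-path statistic and one by a lattice-point / subset choice.

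First I would pin down the structure of $(102,201,101)$-avoiding sequences with given rank and max. Avoiding $101$ (no repeated value with a smaller value strictly between two equal entries) together with $102$ and $201$ should force a rigid form: the entries weakly increase up to position $p=\prmx(e)=m+t+1$, where $e_p=m$, and then the tail $e_{p+1},\dots,e_n$ is constrained by the three avoidances. The key observation I expect is that the ``prefix'' $e_1\le e_2\le \cdots \le e_{m+t+1}=m$ behaves like a Dyck-type object of semilength $m+t+1$: reading the strictly-increasing steps as up-steps and the weak repetitions/descents appropriately, the condition that $\max=m$ and $\rank=t$ (so $p-1-e_p = t$, i.e.\ the first-descent position exceeds the max by exactly $t+1$) translates into ending with $ud^{t+1}$, giving the factor $\frac{t+1}{m+t+1}\binom{2m+t}{m}$ via Lemma~\ref{lem:Dyck_end}. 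The second factor $\binom{n-t-2}{m-1}$ should come from the freedom in the tail: there are $n-(m+t+1)$ tail entries (plus the prefix positions not forced to be new maxima), and the avoidance conditions reduce the choices to selecting where the $m$ distinct values among $\{0,1,\dots,m-1\}$ or the repeated $m$'s appear, among $n-t-2$ available slots choosing $m-1$.

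Concretely, the plan is: (i) characterize $\IS_n(102,201,101)$ with fixed $(m,t)$ as a pair $(\pi,\Sigma)$ where $\pi$ encodes the increasing prefix up to the first descent and $\Sigma$ encodes the admissible tail; (ii) set up a bijection from the prefix data to Dyck paths of semilength $m+t+1$ ending in $ud^{t+1}$, so that semilength and the return-count $t+1$ match up, invoking Lemma~\ref{lem:Dyck_end} to get $\frac{t+1}{m+t+1}\binom{2m+t}{m}$; (iii) show the tail is counted independently by $\binom{n-t-2}{m-1}$, likely by arguing the tail is determined by choosing which of $m-1$ among $n-t-2$ positions carry specified values, the rest being forced to $0$ or to a previously-seen maximal value by the $102/201/101$ constraints; (iv) multiply. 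I would verify small cases ($n$ small, $t=0$, or $m=1$) to confirm the binomial coefficients and edge conventions.

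The main obstacle I anticipate is step (ii)--(iii): carefully disentangling the prefix and tail so that their counts genuinely multiply. The subtlety is that the $101$-avoidance couples the positions of repeated maxima to the intervening values, so I must check that once $\rank$ and $\max$ are fixed, the Dyck-path data (governing how the values $0,\dots,m$ are laid out up to the first descent) and the tail subset data (governing the placement of the $n-t-1-m$ non-peak entries) truly vary independently, with no hidden compatibility constraint linking them. Establishing that the first-descent condition corresponds precisely to the ``ending in $ud^{t+1}$'' condition (rather than merely ``$t+1$ returns'') and confirming the two are equinumerous via Lemma~\ref{lem:Dyck_end} will require the most care.
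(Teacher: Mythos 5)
Your plan is correct and matches the paper's proof essentially step for step: the paper observes that avoiding $101$, $102$, and $201$ simultaneously means $e$ has no ``valley'' $e_i>e_j<e_k$, hence is unimodal of the form $e_1\le\cdots\le e_{m+t+1}=m>e_{m+t+2}\ge\cdots\ge e_n$, counts the prefix by Dyck paths of semilength $m+t+1$ ending in $ud^{t+1}$ via Lemma~\ref{lem:Dyck_end}, and counts the weakly decreasing tail with entries in $\{0,\dots,m-1\}$ by $\binom{n-t-2}{m-1}$. The independence you worry about is automatic here, since the only interaction between prefix and tail is the condition $e_{m+t+2}<m$, which is already built into the tail count.
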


\begin{proof}
Consider \( (102,201,101) \)-avoiding inversion sequences \( e=(e_1, \dots, e_n) \).
Note that \( e \) contains no triple \( 1\leq i<j<k \leq n \) 
such that \( e_{i}>e_{j}<e_{k} \). Thus, for \( 0\leq t \leq n-2 \) and \( 1\leq m \leq n-t-1 \),
\( e \in \IS_{n}(102,201,101) \) with \( \rank(e)=t \) and \( \max(e)=m \) if and only if 
\( e \) is of the form 
\begin{equation}\label{eq:102,201,101}
e_1\leq e_2 \leq \cdots \leq e_{m+t+1} =m >e_{m+t+2} \geq \cdots \geq e_{n}.
\end{equation}
Since the possible subsequences \( (e_1,\dots,e_{m+t+1}) \) are in bijection with Dyck paths of semilength \( m+t+1 \)
ending with \( ud^{t+1} \), there are 
\[ 
\frac{t+1}{m+t+1}\binom{2m+t}{m}
\]
such subsequences by Lemma~\ref{lem:Dyck_end}.
The proof is followed since there are \( \binom{n-t-2}{m-1} \) ways to make the subsequence \( (e_{m+t+2},\dots, e_{n}) \)
such that \( m>e_{m+t+2} \geq \cdots \geq e_n \geq 0 \).
\end{proof}

Now we investigate \( (102, 201) \)-avoiding inversion sequences of length \( n \) that contain the pattern \( 101 \).
For an inversion sequence \( e \), 
let \( \hat{e} \) denote the sequence obtained from \( e \) by removing all the entries \( e_i=\max(e) \).
We denote by \( \IS_{n}(102,201:101) \) the set of inversion sequences of length \( n \) 
that avoid both patterns \( 102 \) and \( 201 \), but contain the pattern \( 101 \).

\begin{lem}\label{lem:102201_101}
Let \( m \), \( n \), and \( t \) be integers such that \( n\geq 4\), \( 0\leq t \leq n-4 \), and \( 1\leq m \leq n-t-3 \).
For inversion sequences \( e=(e_1,e_2, \dots, e_n ) \), 
\( e \in \IS_{n}(102,201) \) with \( \rank(e)=t \) and \( \max(e)=m \)
contains the pattern \( 101 \) if and only if \( e \) is of the form
\begin{multline}\label{eq:102201_101}
\qquad \qquad \qquad e_1\leq e_2 \leq \cdots \leq e_{m+s} \leq \hat{m}, \quad e_{m+s+1}, \dots, e_{m+t+1}=m, \quad e_{m+t+2}=\hat{m},\\ 
e_{m+t+3}, \dots, e_{m+t+k+3} \in \{\hat{m},m\},
\quad \hat{m}>e_{m+t+k+4} \geq \cdots \geq e_{n}, \qquad \qquad \qquad \qquad
\end{multline}
with \( (e_{m+t+3}, \dots, e_{m+t+k+3})\neq (\hat{m}, \dots, \hat{m}) \) 
for some integers \( k \), \( \hat{m} \), and \( s \)
such that \( 0 \leq \hat{m} \leq m-1 \), \( 0 \leq s \leq t \), and \( 0\leq k \leq n-m-t-3 \). 

Consequently, \( \hat{e} \) is a \( (102, 201, 101) \)-avoiding inversion sequence.
\end{lem}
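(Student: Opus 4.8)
The plan is to exploit the elementary reformulation that an inversion sequence avoids both $102$ and $201$ if and only if every strict valley has equal shoulders; that is, whenever $i<j<k$ satisfy $e_j<e_i$ and $e_j<e_k$, then $e_i=e_k$, since comparing the shoulders $e_i$ and $e_k$ yields $102$, $201$, or the permitted $101$ according as $e_i<e_k$, $e_i>e_k$, or $e_i=e_k$. Throughout I write $p:=\prmx(e)$, so that the hypotheses $\rank(e)=t$, $\max(e)=m$, and $102$-avoidance give $p=m+t+1$ together with the unimodal start $e_1\le\cdots\le e_{m+t+1}=m>e_{m+t+2}$; set $\hat{m}:=e_{m+t+2}<m$.

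For the forward implication I would derive the five-region shape one boundary at a time, applying the valley reformulation to triples anchored at the peak position $m+t+1$ and at position $m+t+2$. First, for any $k>m+t+2$ the triple $(m+t+1,m+t+2,k)$ forces $e_k\le\hat{m}$ or $e_k=m$, since a value strictly between $\hat{m}$ and $m$ would create a $201$. Next, letting $k_0$ be the first index $>m+t+2$ with $e_{k_0}<\hat{m}$ (if any), the triples $(m+t+1,k_0,k)$ and $(m+t+2,k_0,k)$ rule out both $e_k=\hat{m}$ and $e_k=m$ for $k>k_0$ (each producing a forbidden $201$ or $102$), and the triple $(m+t+1,p,q)$ forbids any ascent among the remaining entries; hence positions $\ge k_0$ form a weakly decreasing tail of values $<\hat{m}$, while positions $m+t+3,\dots,k_0-1$ take values in $\{\hat{m},m\}$. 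Writing $k_0-1=m+t+k+3$ records the block. Finally, defining $s$ by placing the first occurrence of $m$ at position $m+s+1$ gives $0\le s\le t$, and once a genuine $101$ is present—equivalently, once the block contains an $m$ at some $k^\ast$—the triple $(m+s,m+t+2,k^\ast)$ forbids $e_{m+s}>\hat{m}$ (a $102$), yielding $e_{m+s}\le\hat{m}$ and the stated shape.

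It then remains to match "contains $101$'' with the combinatorial condition on the block. I would show that if the block is empty or consists solely of $\hat{m}$'s, then the whole sequence is weakly increasing up to the $m$-plateau and weakly decreasing thereafter, i.e.\ unimodal, hence valley-free and $101$-avoiding; contrapositively, containing $101$ forces the block to contain an $m$, which is exactly the condition $(e_{m+t+3},\dots,e_{m+t+k+3})\ne(\hat{m},\dots,\hat{m})$ together with $k\ge 0$. For the reverse implication I would take a sequence of the displayed form and read off $\max(e)=m$, $\prmx(e)=m+t+1$, hence $\rank(e)=t$, and a $101$ witnessed by $m,\hat{m},m$ at positions $m+t+1,m+t+2,k^\ast$; avoidance follows by checking that the only values exceeding $\hat{m}$ equal $m$, and that values below $\hat{m}$—which occur only in the increasing prefix or the weakly decreasing tail—cannot serve as valley bottoms, so every strict valley has bottom $\hat{m}$ and both shoulders $m$, i.e.\ is a $101$. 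The "consequently'' clause is then immediate: deleting all $m$'s leaves the increasing prefix, a constant $\hat{m}$-block, and the decreasing tail, a unimodal and therefore $(102,201,101)$-avoiding sequence.

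The main obstacle is the bookkeeping in the forward direction: correctly pinning the boundary $k_0$ between the $\{\hat{m},m\}$-block and the sub-$\hat{m}$ tail, and verifying that no configuration slips through the pattern-avoidance net there—in particular that the value $\hat{m}$ cannot reappear after the sequence has once dropped below it. Care is also needed with the degenerate cases, namely an empty tail ($k=n-m-t-3$), the plateau parameter $s$ at its extremes, and the hypothesis $m\le n-t-3$ guaranteeing that position $m+t+3$ actually exists, so that the parameter ranges $0\le s\le t$, $0\le\hat{m}\le m-1$, and $0\le k\le n-m-t-3$ come out exactly as claimed.
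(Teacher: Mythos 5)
Your proposal is correct, and it arrives at the same structure as the paper, but by a more direct route. The paper first argues that any occurrence of $101$ in $e$ must be realized by the values $(m,\hat m,m)$ with $\hat m=\max(\hat e)$ (a smaller shoulder would combine with the global maximum $m$ into a $102$ or a $201$), concludes that $\hat e$ avoids $101$ as well as $102$ and $201$, invokes the unimodal characterization \eqref{eq:102,201,101} already established for Proposition~\ref{prop:101102201rank}, and then asserts that reinserting the $m$'s compatibly with $102/201$-avoidance, $\rank(e)=t$, and $\max(e)=m$ forces the form \eqref{eq:102201_101}. You instead work entirely on $e$ itself: the ``equal shoulders'' reformulation of $(102,201)$-avoidance, applied to triples anchored at positions $m+t+1$ and $m+t+2$, pins down each region of \eqref{eq:102201_101} in turn, and the claim about $\hat e$ falls out at the end as a corollary of the shape rather than serving as an input. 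The two arguments rest on the same elementary triple analysis; yours has the advantage of making explicit the reassembly step that the paper compresses into ``$e$ needs to satisfy \eqref{eq:102201_101}''---in particular the bound $e_{m+s}\le\hat m$, the $\{\hat m,m\}$-block, and the strict drop below $\hat m$ in the tail are each tied to a specific forbidden triple---at the cost of not reusing the earlier proposition. Your treatment of the equivalence between ``contains $101$'' and the block condition (via unimodality, hence $101$-avoidance, of the degenerate case) and of the reverse implication (every strict valley has bottom $\hat m$ and both shoulders equal to $m$) is sound, and the parameter ranges come out as claimed.
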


\begin{proof}
Consider \( (102,201) \)-avoiding inversion sequences \( e=(e_1,e_2,\dots,e_n) \).
Clearly, if \( e \) is of the form \eqref{eq:102201_101}, then \( e\in \IS_{n}(102,201:101) \) 
with \( \rank(e)=t \) and \( \max(e)=m \).

Now suppose that \( e\in \IS_{n}(102,201) \) with \( \rank(e)=t \) and \( \max(e)=m \) contains 
the pattern \( 101 \), i.e., it satisfies that \( e_i=e_k>e_j \) for some \( i<j<k \). 
If \( e_i<m \), then \( e \) has the subsequences \( (m, e_j, e_k) \) or \( (e_i, e_j, m) \),
which contradicts that \( e \) avoids both patterns \( 102 \) and \( 201 \). Hence, \( e_i=e_k=m \), 
and it follows that \( e_j=\hat{m} \), where \( \hat{m}=\max(\hat{e}) \). 
This yields that \( e \) contains the subsequence \( (m, \hat{m}, m ) \) 
and \( \hat{e} \) is a \( (102, 201, 101) \)-avoiding inversion sequence. 
Then, as in the proof of Proposition~\ref{prop:101102201rank}, \( \hat{e} \) is of the form described in \eqref{eq:102,201,101}, namely,
\[
\hat{e}_1\leq \hat{e}_2 \leq \cdots \leq \hat{e}_{r} = \hat{m} >\hat{e}_{r+1}\geq \cdots \geq \hat{e}_{\ell}
\]
for some \( r \) and \( \ell \). 
Since \( e \) avoids both patterns \( 102 \) and \( 201\) and having \( \rank(e)=t \) and \( \max(e)=m \), 
\( e \) needs to satisfy \eqref{eq:102201_101}.
Lastly, \( (e_{m+t+3}, \dots, e_{m+t+k+3})\neq (\hat{m}, \dots, \hat{m}) \) 
since \( e \) contains \( (m,\hat{m}, m) \) as a subsequence. 
This completes the proof.
\end{proof}

\begin{prop}\label{prop:102201_101}
Let \( m \), \( n \), and \( t \) be integers such that \( n\geq 4 \), \( 0\leq t \leq n-2 \), and \( 1\leq m\leq n-t-3 \).
The number of inversion sequences \( e \in \IS_{n}(102,201:101) \) with \( \rank(e)=t \) and \( \max(e)=m \) is given by
\[
(t+1)\big(2^{n-m-t-2}-1\big) \\
\quad + \sum_{j=1}^{m-1} \sum_{s=0}^{t} \sum_{k=0}^{n-m-t-3} 
(2^{k+1}-1) \cdot \frac{m+s-j+1}{m+s+1} \binom{m+j+s}{j} 
\binom{n+j-m-t-k-4}{j-1}.
\]
\end{prop}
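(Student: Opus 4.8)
The plan is to count these sequences directly from the explicit parametric description supplied by Lemma~\ref{lem:102201_101}. Having fixed $m=\max(e)$ and $t=\rank(e)$, every $e\in\IS_n(102,201:101)$ with these parameters is determined by a choice of $\hat m=j$ with $0\le j\le m-1$, an integer $s$ with $0\le s\le t$ recording where the first run of $m$'s begins, an integer $k$ with $0\le k\le n-m-t-3$ giving the length of the middle $\{\hat m,m\}$-block, together with the increasing prefix $e_1\le\cdots\le e_{m+s}\le\hat m$, the filling of the block $e_{m+t+3},\dots,e_{m+t+k+3}\in\{\hat m,m\}$, and the weakly decreasing suffix $\hat m>e_{m+t+k+4}\ge\cdots\ge e_n$. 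First I would check that the length of this concatenation is always $n$ independently of $s$ and $k$, so that these indices, the prefix, the block pattern, and the suffix all vary independently; the total count then factors as a triple sum over $j,s,k$ of a product of three contributions.

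Next I would evaluate each factor. The middle block contributes $2^{k+1}-1$, as it has $k+1$ cells each equal to $\hat m$ or $m$ with the all-$\hat m$ word excluded (this exclusion is precisely what forces $e$ to contain $101$). The suffix is a weakly decreasing word of length $n-m-t-k-3$ with entries in $\{0,1,\dots,\hat m-1\}$, i.e.\ a multiset of that size drawn from $\hat m=j$ letters, giving $\binom{n+j-m-t-k-4}{j-1}$. For the prefix I would reuse the Dyck-path argument from the proof of Proposition~\ref{prop:101102201rank}: appending $e_{m+t+2}=\hat m$ turns $(e_1,\dots,e_{m+s})$ into a weakly increasing inversion sequence of length $m+s+1$ whose last (hence maximal) entry equals $j$, and such sequences biject with Dyck paths of semilength $m+s+1$ ending in $ud^{\,m+s-j+1}$; Lemma~\ref{lem:Dyck_end} then yields $\tfrac{m+s-j+1}{m+s+1}\binom{m+j+s}{j}$.

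The one delicate point is the degenerate case $\hat m=j=0$, which I would treat separately and which produces the leading term $(t+1)(2^{n-m-t-2}-1)$. When $\hat m=0$ the suffix must be empty, since its entries would have to be negative, so $k$ is forced to equal $n-m-t-3$ and the multiset count $\binom{n+j-m-t-k-4}{j-1}$ degenerates to $\binom{\cdot}{-1}$; instead the prefix is the all-zero word (one choice), the index $s$ still ranges over its $t+1$ values, and the block of length $n-m-t-2$ contributes $2^{n-m-t-2}-1$, giving $(t+1)(2^{n-m-t-2}-1)$. Pulling out $j=0$ and summing the generic terms over $1\le j\le m-1$, $0\le s\le t$, and $0\le k\le n-m-t-3$ then reproduces the claimed expression.

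I expect the main obstacle to be bookkeeping rather than a genuine difficulty: one must confirm that the parameter ranges match Lemma~\ref{lem:102201_101} with no hidden coupling among $s$, $k$, $j$, and the prefix/suffix lengths, and that each factor automatically respects the inversion-sequence constraint $e_i<i$ (it does, since the Dyck bijection already encodes $e_i\le i-1$ on the prefix, and every later entry sits at a position exceeding its value). The cleanest route to avoid arithmetic slips is to establish the length identity first and then read off the three factors independently.
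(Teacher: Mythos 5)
Your proposal is correct and follows essentially the same route as the paper: both count the sequences via the parametrization $(j,s,k)$ from Lemma~\ref{lem:102201_101}, evaluate the prefix by the Dyck-path bijection of Lemma~\ref{lem:Dyck_end}, the middle block by $2^{k+1}-1$, and the suffix by a multiset count, with the case $j=\hat m=0$ handled separately to produce the leading term $(t+1)(2^{n-m-t-2}-1)$.
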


\begin{proof}
By Lemma~\ref{lem:102201_101}, we need to count the inversion sequences 
of the form \eqref{eq:102201_101} with the additional property \( (e_{m+t+3}, \dots, e_{m+t+k+3})\neq (j, \dots, j) \), for some integers \( k \), \( j \), and \( s \),
such that \( 0 \leq j \leq m-1 \), \( 0 \leq s \leq t \), and \( 0\leq k \leq n-m-t-3 \). 

In the special case \( j=0 \), the entries of \( e \) are restricted to the set \( \{ 0, m \} \). 
In this case, it should be \( k = n-m-t-3 \), so there are \( (t+1)(2^{n-m-t-2}-1) \) inversion sequences of the form 
\begin{align*}
&e_1=\cdots=e_{m+s}=0, \quad e_{m+s+1}=\cdots=e_{m+t+1}=m, \quad e_{m+t+2}=0, \\
&e_{m+t+3}, \dots, e_{n}\in \{0,m\} \text{~with at least one entry equal to \( m \)}
\end{align*}
with \( 0\leq s \leq t \). 

Now consider the case where \( j > 0 \). 
The possible weakly increasing subsequences \( (e_1, \dots, e_{m+s}) \) satisfying \( e_{m+s} \leq j \) 
are in bijection with Dyck paths of semilength \( m+s+1 \) ending with \( ud^{m+s-j+1} \). 
Hence, by Lemma~\ref{lem:Dyck_end}, their count is
\[
\frac{m+s-j+1}{m+s+1}\binom{m+j+s}{j}.
\]
The subsequence \( (e_{m+t+3}, \dots, e_{m+t+k+3}) \in \{j, m\}^{k+1} \setminus \{j\}^{k+1} \) 
contributes \( 2^{k+1} - 1 \) choices.  
Finally, the number of weakly decreasing sequences \( (e_{m+t+k+4}, \dots, e_n) \) with entries 
at most \( j-1 \)
is given by \( \binom{n+j-m-t-k-4}{j-1} \).  
Combining these counts completes the proof.
\end{proof}

By summing over both classes of sequences—those avoiding and those containing the pattern 
\( 101 \)—we obtain the total count of 
\( (102,201) \)-avoiding inversion sequences of given rank.

\begin{cor}\label{cor:102201_total}
For integers \(n \geq 2\) and \(0 \leq t \leq n-2\), the number of 
\((102,201)\)-avoiding inversion sequences \(e\) of length \( n \) with \(\rank(e) = t\) is given by
\[
\sum_{m=1}^{n-t-1} a(n,t,m) + \sum_{m=1}^{n-t-3} b(n,t,m),
\]
where
\begin{align*}
a(n,t,m) &= \frac{t+1}{m+t+1} \binom{2m+t}{m} \binom{n-t-2}{m-1}, \\
b(n,t,m) &= (t+1)\big(2^{n-m-t-2}-1\big) \\
&\quad + \sum_{j=1}^{m-1} \sum_{s=0}^{t} \sum_{k=0}^{n-m-t-3} 
(2^{k+1}-1) \cdot \frac{m+s-j+1}{m+s+1} \binom{m+j+s}{j} 
\binom{n+j-m-t-k-4}{j-1}.
\end{align*}
\end{cor}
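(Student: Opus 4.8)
The plan is to prove this corollary by partitioning $\IS_{n,t}(102,201)$ according to whether a sequence contains the pattern $101$, and then refining each part by the value of $\max(e)$. Every $e \in \IS_n(102,201)$ either avoids $101$, placing it in $\IS_n(102,201,101)$, or contains $101$, placing it in $\IS_n(102,201{:}101)$; these two classes are disjoint and together exhaust $\IS_n(102,201)$. Intersecting with the rank-$t$ condition gives the disjoint decomposition
$$\IS_{n,t}(102,201) = \{ e \in \IS_{n,t}(102,201) : e \text{ avoids } 101 \} \sqcup \{ e \in \IS_{n,t}(102,201) : e \text{ contains } 101 \},$$
so it suffices to count each class separately and add the results.

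First I would record that for $0 \leq t \leq n-2$ every $e \in \IS_{n,t}(102,201)$ has $\max(e) = m \geq 1$: the only rank-$t$ sequence with $m = 0$ would be the all-zero sequence $(0,0,\dots,0)$, which has $\prmx(e) = n$ and hence $\rank(e) = n-1$, contradicting $t \leq n-2$. Consequently both classes can be refined according to the positive value $\max(e) = m$, and the fixed-$m$ counts are supplied verbatim by the two preceding results. Summing Proposition~\ref{prop:101102201rank} over its admissible range $1 \leq m \leq n-t-1$ yields $\sum_{m=1}^{n-t-1} a(n,t,m)$ as the number of $101$-avoiding sequences, and summing Proposition~\ref{prop:102201_101} over $1 \leq m \leq n-t-3$ yields $\sum_{m=1}^{n-t-3} b(n,t,m)$ as the number of $101$-containing sequences. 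Adding these two expressions gives the asserted formula.

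The only genuinely delicate point is confirming that the two summation ranges are exactly those dictated by Propositions~\ref{prop:101102201rank} and~\ref{prop:102201_101}, and handling the small-$n$ boundary. When $n-t-3 < 1$ there are no $101$-containing sequences of rank $t$, since there is not enough room to realize the subsequence $(m,\hat m,m)$ together with the required weakly increasing prefix and weakly decreasing suffix; the second sum is then empty under the usual convention, and the formula collapses to its first sum. Because both propositions already carry all of the structural and combinatorial work, I expect no further obstacle: the corollary follows by simply summing their fixed-$m$ counts over the disjoint union, and the argument is essentially bookkeeping of the summation bounds.
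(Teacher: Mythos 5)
Your proposal is correct and matches the paper's argument exactly: the paper also obtains the corollary by splitting $\IS_{n,t}(102,201)$ into the $101$-avoiding and $101$-containing subclasses, refining by $\max(e)=m$, and summing the counts from Propositions~\ref{prop:101102201rank} and~\ref{prop:102201_101} over their respective ranges. Your additional checks on $m\geq 1$ and the empty second sum are harmless bookkeeping that the paper leaves implicit.
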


%------------------------------------------------------------------------

\subsection{(102, 210)-avoiding  inversion sequences}
In this subsection, we identify the labeled $F$-paths corresponding to 
\( (102,210) \)-avoiding inversion sequences with a fixed rank, and subsequently enumerate these paths.

For a nonnegative integer $n$, recall \(\LFp{n} \) denotes the set of labeled \( F \)-paths of semilength \( n \). Let \(\LFp{n,t} \) denote the set of $Q$ in $\LFp{n}$ with $\height(Q)=t$.
For each step $(a,b)$ in a labeled $F$-path $Q$, we call a step $(a,b)$ 
\begin{itemize}
\item \emph{north} if $a=0$ and $b=1$,
\item \emph{up} if $a\ge1$ and $b=1$,
\item \emph{down} if $a\ge1$ and $b\le0$.
\end{itemize}
Now, consider a down step $(a,b)$ with an associated label $(a;b_1,\ldots,b_k)$. We call a down step $(a,b)$  
\begin{itemize}
\item \emph{pure} if $k=1$,
\item \emph{$0$-tailed} if $k\ge2$ and $b_2=\dots=b_k=0$, 
\item \emph{complex} if \( k\ge2 \) and there exists at least one \( i \in \{ 2, \dots, k \} \) such that \( b_i<0 \).
\end{itemize}

To analyze a particular subclass of labeled 
\( F \)-paths related to \( (102,210) \)-avoiding inversion sequences, 
we define two disjoint subsets of \( \LFp{n,t} \) based on the nature and placement of the (at most one) down step:
Let $\LFa_{n,t}$ be the set of $Q\in \LFp{n,t}$ such that $Q$ contains at most one down step, which (if present) 
is 
either pure or 0-tailed; and let $\LFb_{n,t}$ be the set of $Q\in \LFp{n,t}$ such that $Q$
contains exactly one down step, which is complex, and all steps following it (if any) are north steps.
Their union defines the subclass of interest:
 
$$\LFp{n,t}(210):=\LFa_{n,t}\cup\LFb_{n,t}.$$

\begin{lem}\label{lem:102210}
For nonnegative integers \( n \) and \( t \), we have 
$$\phi(\LFp{n,t}(210))=\IS_{n+1,t}(102,210).$$ 
\end{lem}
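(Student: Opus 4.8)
The plan is to show that the bijection $\phi\colon \LFp{n}\to\IS_{n+1}(102)$ from Theorem~\ref{thm:IS} restricts to a bijection between the declared subclass $\LFp{n,t}(210)=\LFa_{n,t}\cup\LFb_{n,t}$ and $\IS_{n+1,t}(102,210)$. Since $\phi$ already preserves rank (it sends $\height(Q)=t$ to $\rank(\phi(Q))=t$) and is a bijection onto all of $\IS_{n+1}(102)$, it suffices to prove the set equality $\phi(\LFp{n,t}(210))=\IS_{n+1,t}(102,210)$ by showing the two inclusions: every labeled $F$-path in $\LFa_{n,t}\cup\LFb_{n,t}$ maps to a $210$-avoiding sequence, and conversely every $(102,210)$-avoiding sequence of rank $t$ arises as $\phi(Q)$ for some $Q$ in this subclass. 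I would carry this out by translating the structural constraint ``avoids $210$'' on the inversion-sequence side into a constraint on the last step and its label under the recursive definition of $\phi$.

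First I would recall precisely how $\phi$ builds $e=\phi(Q)$ from the path: each step $(a,b)$ with its label inserts either one copy of $m=\max(\hat e)+a$ (an up/north step, case~(1)) or $k$ copies of $m$ (a down step, case~(2)) into $\hat e=\phi(\hat Q)$, at positions dictated by $b_1,\dots,b_k$. The pattern $210$ is a \emph{strictly decreasing} triple, so a $210$-occurrence in $e$ needs three entries $e_{i}>e_{j}>e_{k}$ with $i<j<k$. The key observation to extract is: since every newly inserted value equals the running maximum $m$, a descent of length three can only be created once we introduce a value that is strictly below an earlier large value which in turn lies below an even earlier one. I would examine, case by case on the type of last step (north, up, pure down, $0$-tailed down, complex down) and on what precedes it, exactly when inserting the step's values either creates a $210$ pattern or destroys the $210$-avoidance that $\hat e$ enjoyed. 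The guiding claim is that a down step's label $(a;b_1,\dots,b_k)$ with some $b_i<0$ for $i\ge 2$ (a \emph{complex} down step) forces a strict drop \emph{in the interior} of the inserted block, which together with surrounding entries manufactures a $210$ pattern unless everything after it is flat (north steps, which only append the current maximum at the top). This is exactly the dichotomy encoded in $\LFa_{n,t}$ (at most one down step, pure or $0$-tailed) versus $\LFb_{n,t}$ (a single complex down step followed only by north steps).

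Concretely, I would argue the forward inclusion by induction on $n$ using the recursive structure: assuming $\hat e=\phi(\hat Q)$ avoids $210$ whenever $\hat Q\in\LFp{n-k,\cdot}(210)$, I check that attaching a permissible last step (of the kind allowed in $\LFa$ or $\LFb$) keeps $e$ in $\IS(102,210)$, and that attaching any forbidden step (a second down step, or a complex down step not at the end-minus-north-steps, or a complex down step followed by an up step) produces an explicit $210$-occurrence. For the reverse inclusion I would take $e\in\IS_{n+1,t}(102,210)$ and peel off its last step via $\phi^{-1}$ as in the proof of Theorem~\ref{thm:IS}, reading off $(a,b)$ and the label from the positions of $\max(e)$ and $\prmx(e)$; then I verify the recovered label can only be of the pure, $0$-tailed, or (terminal) complex type, and that $\hat e=\phi^{-1}$-image again avoids $210$, so induction applies. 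The main obstacle I anticipate is the careful bookkeeping in case~(2): showing that an interior strict decrease $b_i<0$ ($i\ge2$) in a down-step label genuinely forces a $210$ pattern, and pinning down exactly which later steps (north only, versus up or further down steps) preserve versus violate avoidance. This requires translating the insertion positions $j_1<j_2\le\cdots\le j_k$ back into relative orders of entries and locating the three witnessing indices explicitly; handling the boundary subcases ($k=1$ versus $k\ge 2$, empty $\hat e$, and the placement of the lone down step) cleanly will be where the real work lies.
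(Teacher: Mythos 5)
Your proposal is correct and follows essentially the same route as the paper: both use the rank-preserving bijection $\phi$ and characterize $210$-avoidance of $\phi(Q)$ by analyzing how each type of down step (pure, $0$-tailed, complex) and its position inserts copies of the running maximum, showing that membership in $\LFa_{n,t}\cup\LFb_{n,t}$ is exactly the condition under which no strictly decreasing triple is created. The paper's own proof is in fact terser than your plan, asserting the key claims (a second down step, or a complex down step followed by an up step, forces a $210$ occurrence) without the explicit inductive bookkeeping you outline, so your version would simply be a more detailed execution of the same argument.
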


\begin{proof}
First, suppose \( Q \in \LFa_{n,t} \), meaning that it contains at most one down step, which is either pure or 0-tailed.  
Then \( \phi(Q)  \) has at most one descent, and 
hence it avoids the \(210 \) pattern.
Therefore, \( \phi(Q) \in \IS_{n+1,t}(102,210) \).  

Next, suppose \( Q \in \LFb_{n,t} \), so that \( Q \) has a unique down step which is complex, and all subsequent steps (if any) are north steps.  
These north steps cannot contribute to a \( 210 \) pattern in \( \phi(Q) \), and 
although the unique complex down step may introduce one or more descents due to the insertion 
of a maximum value, the resulting sequence still avoids the 210 pattern.
Therefore, \( \phi(Q) \in \IS_{n+1,t}(102,210) \) in this case as well.  

On the other hand, suppose \( Q \notin \LFp{n,t}(210) \).  
If \( Q \) contains two or more down steps, then by construction \( \phi(Q) \) contains the pattern \( 210 \).  
If \( Q \) has a unique down step that is complex and followed by an up step, then again \( \phi(Q) \) contains \( 210 \).  
Hence, in both cases, \( \phi(Q) \notin \IS_{n+1,t}(102,210) \).

Combining the above observations, we conclude that
\[
\phi(\LFp{n,t}(210)) = \IS_{n+1,t}(102,210).
\]
\end{proof}

For a nonnegative integer $t$, let
\begin{align*}
A_t=A_t(x)&:=\sum_{n\ge 0} \left|\LFa_{n,t}\right| x^n,\\
A(u,x)&:=\sum_{n, t\ge 0} \left|\LFa_{n,t}\right| u^t x^n = \sum_{t\ge 0} A_t(x)\, u^t, \\
B(u,x)&:=\sum_{n, t\ge 0} \left|\LFb_{n,t}\right| u^t x^n.
\end{align*}
By decomposing the labeled $F$-paths in $\cup_{n\ge1}\LFa_{n,t}$, we have
\begin{equation}\label{eq:At-A0}
A_t = x^t C^{t+1} + (t+1)\left(A_0-C\right)x^t C^t=\left( (t+1)A_0-tC\right)(xC)^t,
\end{equation}
where \( C=C(x) \) denotes the generating function for the Catalan numbers \( C_n \), 
which counts the number of labeled $F$-paths in $\LFa_{n,0}$ of semilength $n$, 
consisting only of north and up steps.

To find the generating function \( A_0 \), let us consider
the last step of \( Q \in \LFa_{n,0} \) with \( n \ge 1 \).

\begin{itemize}
  \item If the last step is an up step of the form \( (t+1,1) \), then this case contributes \( x \cdot A_t \).
  \item If the last step is a pure down step \( (a, a-t) \)  for \( 1 \le a \le t \), then this case contributes \( tx \cdot x^t C^{t+1} \) since the last step is the only down step. Here,  the factor \( t \) accounts for the number of choices for \( a \).
  \item If the last step is a $0$-tailed down step  \( (a, a-t) \) labeled as \( (a; a-t, 0, \ldots, 0) \) for \( 1 \le a \le t \),  
  then this case contributes
  \[
  tx \cdot (x + x^2 + x^3 + \cdots) \cdot x^t C^{t+1},
  \]
  since the last step is again the only down step, and the length of its $0$-tail can be any positive integer.  
 Here, the generating function \( x + x^2 + x^3 + \cdots \) accounts for the number of ways to assign a positive
  length $0$-tail.
\end{itemize}
Thus, we get 
the following equation for \( A_0 \):

\begin{align*}
A_0-1
&=\sum_{t\ge0}xA_t+\sum_{t\ge1}t(xC)^{t+1}+\sum_{t\ge1}t\frac{x}{1-x}(xC)^{t+1} \\
&=xA_0\sum_{t\ge0}(t+1)(xC)^t+\frac{x}{1-x}\sum_{t\ge1}t(xC)^{t+1} \tag{$\because$~\eqref{eq:At-A0}}\\
&=\frac{xA_0}{(1-xC)^2}+\frac{x}{1-x}\frac{x^2 C^2}{(1-xC)^2}\\ 
&=xC^2 A_0+\frac{x^3 C^4}{1-x} \tag{$\because$ $C=\frac{1}{1-xC}$}.
\end{align*}
Solving the equation for $A_0$, we have
\begin{align*}
A_0
&=\frac{1+x^3 C^4/(1-x)}{1-xC^2}\\
&=C+\frac{xC^3-xC^2+x^3 C^4/(1-x)}{1-xC^2}\tag{$\because$~$1-C=-xC^2$}\\
&=C+\frac{1}{1-x}\frac{x^2 C^4}{1-xC^2}\tag{$\because$~$C^3-C^2=x C^4$}\\
&=C+\frac{1}{1-x}\frac{x^2 C^3}{\sqrt{1-4x}}\tag{$\because$~$\frac{C}{1-xC^2}=\frac{1}{\sqrt{1-4x}}$}.
\end{align*}
Substituting this expression for \( A_0 \)
into the identity in~\eqref{eq:At-A0}, we obtain the following closed form for \( A(u,x) \).
\begin{align*}
A(u,x)
&=\sum_{t\ge0}\left( (t+1)A_0-tC\right)(uxC)^t\\
&=\frac{A_0-uxC^2}{\left(1-uxC\right)^2}\\
&=\frac{x^2C^3}{(1-uxC)^2(1-x)\sqrt{1-4x}}+\frac{C}{1-uxC}.
\end{align*}

We now turn to the generating function \( B(u,x) \), which enumerates paths in \( \LFb_{n,t} \).  
For a nonnegative integer $t$, let $\wtd{\LFb}_{n,t}$ denote the 
subset of ${\LFb}_{n,t}$
consisting of paths whose unique down step appears at the end and is a complex down step. 
Define
\begin{align*}
\wtd{B}_t=\wtd{B}_t(x)&:=\sum_{n\ge 0} \left|\wtd{\LFb}_{n,t}\right| x^n,\\
\wtd{B}(u,x)&:=\sum_{n, t\ge 0} \left|\wtd{\LFb}_{n,t}\right| u^t x^n= \sum_{t\ge 0} \wtd{B}_t(x)\, u^t.
\end{align*}
By decomposing the labeled $F$-paths in $\cup_{n\ge1}\wtd{\LFb}_{n,t}$, we obtain the relation 
\begin{equation}\label{eq:B_t}
\wtd{B}_t = (xC)^t \wtd{B}_0, 
\end{equation}
where \( C=C(x) \) denotes the generating function for the Catalan numbers \( C_n \), 
which counts the number of labeled $F$-paths in ${\LFa}_{n,0}$ of semilength $n$, 
consisting only of north and up steps.

To find the generating function \( \wtd{B}_0 \), let us examine
the last step $(a,b)$ of a labeled $F$-path \( Q \in \wtd{\LFb}_{n,0} \). For each value $s=a-b$,
the last step is a complex down step $(a, a-s)$ labeled as $(a;b_1,\ldots,b_k)$, where
$$ 1\le a \le s-1,  \quad k\ge 2,\quad b_1,\ldots,b_k \le 0, \quad b_1+\cdots+b_k=a-s, \quad (b_2,\ldots, b_k)\neq(0,\ldots,0).  $$
Thus, the contribution corresponding to a fixed value of $s$ is
$$
\sum_{k\ge2}\sum_{a=1}^{s-1}\left( \binom{s+k-a-1}{k-1}-1\right)x^k\cdot x^s C^{s+1}=\left(\frac{1}{(1-x)^s}-\frac{sx}{1-x}-1 \right)x^s C^{s+1}.
$$
Therefore, we get 
the following equation for \( \wtd{B}_0 \):
\begin{align*}
\wtd{B}_0&=\sum_{s\ge2}\left(\frac{1}{(1-x)^s}-\frac{sx}{1-x}-1 \right)x^s C^{s+1}.\\
&=\frac{x^2C^5}{1-x}-\frac{x^2 C^2}{1-x}\left(C^2-1 \right)-{x^2 C^4}\\
&=\frac{x^4C^7}{1-x}.
\end{align*}
The simplification can be carried out as in the computation of $A_0$, 
using similar generating function manipulations.
Due to \eqref{eq:B_t}, we have
$$\wtd{B}(u,x)=\sum_{t\ge0} \wtd{B}_t u^t = \frac{\wtd{B}_0}{1-uxC}. $$
Since all steps of $Q\in \LFb_{n,t}$ following the complex down step (if any) are north steps,
each such sequence contributes an additional factor of $\frac{1}{1-ux}$.
Therefore, we obtain

\begin{align*}
B(u,x)=\wtd{B}(u,x)\cdot\frac{1}{1-ux}&=\frac{x^4 C^7}{(1-x)(1-uxC)(1-ux)}\\
&=\frac{x^3C^5}{1-x}\left(\frac{C}{1-uxC}-\frac{1}{1-ux} \right).
\end{align*}

Combining the two types of labeled \( F \)-paths, we define the total generating function
$$G(u,x):=\sum_{n, t\ge 0} \left|\LFp{n,t}(210)\right| u^t x^n.$$
Then,
\[
G(u,x)=A(u,x)+{B(u,x)},\\
\]
and by substituting the expressions derived above, we obtain:
\[
G(u,x)=\frac{x^2C^3/(1-x)}{\sqrt{1-4x}}\frac{1}{(1-xCu)^2}+\frac{C}{1-xCu}+\frac{x^3 C^6/(1-x)}{1-xCu}-\frac{x^3 C^5/(1-x)}{1-xu}.
\]
Thus, we obtain
\begin{equation*}
\left[u^t\right] G(u,x)=\frac{(t+1)x^{t+2}C^{t+3}}{(1-x)\sqrt{1-4x}}+x^t C^{t+1} + \frac{x^{t+3}C^{t+6}}{1-x}-\frac{x^{t+3}C^{5}}{1-x}.
\end{equation*}
Since 
\[
\left[x^{n-1}\right]\left(\left[u^t\right] G(u,x)\right)=\left|\LFp{n-1,t}(210)\right|
= \left|\IS_{n,t}(102,210)\right|
\] 
for $n\ge 1$, we arrive at the following enumerative result:

\begin{prop}
For integers \(n \geq 2\) and \(0 \leq t \leq n-1\), the number of 
\((102,210)\)-avoiding inversion sequences \(e\) of length \( n \) with \(\rank(e) = t\) is given by
$$
c(n-t-1,t+1) + (t+1)\sum_{i=0}^{n-t-3}\binom{2i+t+3}{i}+\sum_{i=0}^{n-t-4}\left(c(i,t+6)-c(i,5) \right),
$$
where $c(j,k):=\frac{k}{2j+k}\binom{2j+k}{n}=[x^j]C^k$.
\end{prop}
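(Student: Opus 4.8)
The plan is to obtain the stated count directly as the coefficient $[x^{n-1}]$ of the already-computed series $[u^t]G(u,x)$, so that no new combinatorial construction is required. By Lemma~\ref{lem:102210} together with the preceding identification $|\IS_{n,t}(102,210)| = [x^{n-1}]\bigl([u^t]G(u,x)\bigr)$, the whole argument reduces to extracting $[x^{n-1}]$ from the four-term expression
$$[u^t]G(u,x)=\frac{(t+1)x^{t+2}C^{t+3}}{(1-x)\sqrt{1-4x}}+x^{t}C^{t+1}+\frac{x^{t+3}C^{t+6}}{1-x}-\frac{x^{t+3}C^{5}}{1-x},$$
and matching each of the three resulting sums against the corresponding summand in the claimed formula. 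Throughout I would use the defining relation $c(j,k)=[x^j]C^{k}$ rather than the explicit binomial form.

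First I would treat the elementary term $x^{t}C^{t+1}$: after the shift by $x^t$, its contribution is $[x^{\,n-1-t}]C^{t+1}=c(n-t-1,\,t+1)$, which is exactly the first summand. Next, for the last two terms I would exploit that multiplication by $\tfrac{1}{1-x}$ replaces coefficient extraction by a partial sum, i.e. $[x^{m}]\tfrac{C^{k}}{1-x}=\sum_{i=0}^{m}c(i,k)$; applying this after the shift by $x^{t+3}$ (so $m=n-t-4$) gives the two sums $\sum_{i=0}^{n-t-4}c(i,t+6)$ and $\sum_{i=0}^{n-t-4}c(i,5)$, whose difference is the third summand.

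The only genuinely non-routine ingredient lies in the first term, and this is where I expect the (still modest) main obstacle to be. Here I would invoke the classical identity
$$\frac{C^{k}}{\sqrt{1-4x}}=\sum_{j\ge0}\binom{2j+k}{j}x^{j},$$
which turns $\tfrac{C^{t+3}}{\sqrt{1-4x}}$ into an explicit binomial series; combining this with the partial-sum effect of $\tfrac{1}{1-x}$ and the shift by $x^{t+2}$ produces $(t+1)\sum_{i=0}^{n-t-3}\binom{2i+t+3}{i}$, the second summand. Adding the three contributions reproduces the stated expression, and I would close by checking that the usual empty-sum conventions (vanishing sums when $n-t-3<0$ or $n-t-4<0$) correctly handle the boundary ranks $t=n-1$ and $t=n-2$, so that the formula remains valid across the full range $0\le t\le n-1$.
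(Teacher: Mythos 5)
Your proposal is correct and follows essentially the same route as the paper: the paper's own proof of this proposition is precisely the termwise extraction of $[x^{n-1}]$ from the displayed expression for $[u^t]G(u,x)$, using $[x^j]C^k=c(j,k)$, the partial-sum effect of $\tfrac{1}{1-x}$, and the identity $C^k/\sqrt{1-4x}=\sum_{j\ge0}\binom{2j+k}{j}x^j$. Your choice to work from $c(j,k)=[x^j]C^k$ also quietly sidesteps the typo in the paper's closed form (which should read $\tfrac{k}{2j+k}\binom{2j+k}{j}$, not $\binom{2j+k}{n}$).
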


%-------------------------------------------------------------------------

\subsection{(102, 110)-avoiding  inversion sequences}
In this subsection, we aim to enumerate 
\( (102,110) \)-avoiding inversion sequences by employing a similar generating function approach as in the previous case of \( (102,210) \)-avoidance.

Let $\LFp{n,t}(110)$ be the set of $Q\in \LFp{n,t}$ satisfying the following conditions:

\begin{itemize}
\item \( Q \) contains no complex down steps;
\item every \( 0 \)-tailed down step in \( Q \) is preceded only by north or up steps;
\item no north step occurs after any down step.
\end{itemize}

\begin{lem}\label{lem:102110}
For nonnegative integers \( n \) and \( t \), we have 
$$\phi(\LFp{n,t}(110))=\IS_{n+1,t}(102,110).$$ 
\end{lem}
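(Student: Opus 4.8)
The plan is to prove that $\phi$ restricts to a bijection between $\LFp{n,t}(110)$ and $\IS_{n+1,t}(102,110)$. Since $\phi$ is already known to be a bijection from $\LFp{n}$ onto $\IS_{n+1}(102)$ preserving height/rank (Theorem~\ref{thm:IS}), and since $\LFp{n,t}(110) \subseteq \LFp{n,t}$ maps into $\IS_{n+1,t}(102)$ automatically, it suffices to show the set-theoretic equivalence
\[
Q \in \LFp{n,t}(110) \iff \phi(Q) \in \IS_{n+1,t}(102,110).
\]
I would prove this by tracking exactly how the pattern $110$ can arise in $e=\phi(Q)$, step by step in the recursive construction of $\phi$. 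The key observation is that a $110$ occurrence requires two equal entries $e_a=e_b$ (with $a<b$) followed by a strictly smaller entry $e_c$ (with $b<c$). In the language of $\phi$, equal entries that are larger than a later entry are precisely created when a maximum value $m$ is inserted (Case (2)) or when the new top value $m$ fails to be the final maximum.

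**The forward direction** (if $Q \in \LFp{n,t}(110)$ then $\phi(Q)$ avoids $110$) is the construction-preservation half. I would argue that each of the three defining conditions on $\LFp{n,t}(110)$ blocks one mechanism for producing a $110$. First, the \emph{no complex down steps} condition ensures that whenever Case~(2) of $\phi$ inserts $k$ copies of $m$, the insertion positions $j_1 < j_2 \le \cdots \le j_k$ are as tightly packed as possible (all $b_i=0$ for $i\ge 2$), so no entry strictly smaller than $m$ is sandwiched after two copies of $m$ in a way that would complete a $110$; a complex step, by contrast, would spread the $m$'s apart and leave a smaller entry between two of them followed by yet another smaller entry. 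Second, \emph{every $0$-tailed down step is preceded only by north or up steps} guarantees that before any repeated-maximum block is created, the inversion sequence is still weakly increasing, so there is no pre-existing pair $e_a = e_b$ with a drop after it that could combine with the new maximum. Third, \emph{no north step after a down step}: a north step inserts a value $m$ immediately after the current prefix-max position $\hat e_{\hat p}$, and if this happens after a descent has already been created, the two equal copies of the earlier maximum together with this later smaller descent would form $110$. I would verify each of these by reading off the explicit formulas for $\max(e)$, $\prmx(e)$, and the insertion indices in the two cases of $\phi$, checking that no forbidden triple can appear.

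**The reverse direction** (if $\phi(Q)$ avoids $110$ then $Q \in \LFp{n,t}(110)$) is the contrapositive of the above, and I would organize it as: if $Q$ violates any one of the three conditions, exhibit an explicit $110$ occurrence in $\phi(Q)$. This mirrors the structure of the proof of Lemma~\ref{lem:102210}, where violating the defining conditions of $\LFp{n,t}(210)$ was shown to force a $210$ pattern. Concretely, a complex down step produces a spread-out block of maxima with a strictly smaller entry trapped between two of them and another smaller entry after, giving $110$; a $0$-tailed down step preceded by a down step, or a north step occurring after a down step, likewise forces an equal-equal-smaller triple by the insertion mechanics.

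**The main obstacle** I anticipate is the bookkeeping in the reverse direction: when $Q$ has several down steps, the interactions between successive insertions of distinct maximum values must be controlled carefully, because a later (larger) maximum is inserted into the prefix while earlier (smaller) maxima persist as repeated entries deeper in the sequence. I would handle this by induction on the number of steps of $Q$, peeling off the last step exactly as $\phi$ does, and using the inductive hypothesis that $\phi(\hat Q)$ already lies in (or outside) $\IS_{n-k+1}(102,110)$; the crux is then a local analysis showing that attaching the last step preserves $110$-avoidance if and only if that last step respects the three conditions relative to $\hat Q$. The formulas for $\prmx(e)=j_1+1$ and the insertion indices $j_i$ from Case~(2) of $\phi$ give precisely the combinatorial data needed to locate any newly created $110$, so the argument reduces to a finite case check once the inductive framing is set up.
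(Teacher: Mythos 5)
Your proposal follows essentially the same route as the paper: both arguments track, step by step through the recursive definition of $\phi$, which of the three defining conditions of $\LFp{n,t}(110)$ blocks which mechanism for creating a $110$, and establish the converse by exhibiting a $110$ occurrence whenever a condition is violated. One small correction to your assignment of roles: the condition that a $0$-tailed down step be preceded only by north or up steps is needed not to rule out a pre-existing equal pair with a later drop, but to guarantee that $\hat{p}=\prmx(\hat{e})$ is the last index of $\hat{e}$, so that the repeated copies of $m$ inserted after $\hat{e}_{\hat{p}}$ are not followed by the smaller entry $\hat{e}_{\hat{p}+1}$ --- which would complete a $110$ even when all $b_i=0$ for $i\ge 2$.
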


\begin{proof}
Let \( Q \in \LFp{n,t} \). 
Suppose \( Q \) contains a down step labeled \( (a; b_1, \ldots, b_k) \) with \( k \ge 2 \). 
Since \( k \ge 2 \), the current maximum value \( m \) assigned at the moment of insertion is inserted at least twice in \( \phi(Q) \). 
Note that if \( b_i < 0 \) for some \( i \ge 2 \), then the insertion at that position results in a smaller value following a repeated entry \( m \), thereby creating a descent. 
This produces an instance of the pattern \( 110 \) in \( \phi(Q) \). 
Hence, if \( \phi(Q) \in \IS_{n+1,t}(102,110) \), then \( Q \) must not contain any complex down steps.

Next, suppose \( Q = s_1 \dots s_n \) contains a 0-tailed down step \( s_i \), labeled \( (a; b_1, \ldots, b_k) \) with \( k \ge 2 \). 
Assume that \( \phi(s_1 \dots s_{i-1}) \) has already been constructed, and let \( r \) be the current maximum value at that stage. 
The step \( s_i \) inserts a new value \( m > r \) exactly \( k \) times into the sequence. 
More precisely, one occurrence of \( m \) is inserted to the left of \( r \), and the remaining \( m \)'s are placed consecutively to the right of \( r \).
Note that if all of the steps \( s_1, \ldots, s_{i-1} \) are north or up steps, then \( \phi(s_1 \dots s_{i-1}) \) contains no descent. 
In this case, the insertion of the repeated \( m \)'s around \( r \) does not produce a \( 110 \) pattern.  
However, if there exists at least one down step among \( s_1, \dots, s_{i-1} \), then \( \phi(s_1 \dots s_{i-1}) \) already contains a descent, and together with the inserted \( m \)'s, this results in an occurrence of the pattern \( 110 \).

Finally, if a north step appears after a down step,
then this necessarily produces the pattern \( 110 \) in \( \phi(Q) \). Hence, such cases must also be excluded.

These conditions precisely characterize the set of paths \( Q \in \LFp{n,t} \) such that \( \phi(Q) \in \IS_{n+1,t}(102,110) \). 

Conversely, suppose \( Q \in \LFp{n,t}(110) \). 
By definition, \( Q \) contains no complex down steps, no north step is followed by a down step, and any 0-tailed down step is preceded only by up or north steps. 
As shown above, each of these conditions guarantees that \( \phi(Q) \) avoids the pattern \( 110 \). 
Since \( Q \in \LFp{n,t} \), we also have \( \phi(Q) \in \IS_{n+1,t}(102) \), and hence \( \phi(Q) \in \IS_{n+1,t}(102,110) \).

Thus, we conclude that
\[
\phi(\LFp{n,t}(110)) = \IS_{n+1,t}(102,110).
\]
\end{proof}

We now turn to the enumeration of such paths using generating functions.
Let
\begin{align*}
H_t=H_t(x)&:=\sum_{n\ge 0} \left|\LFp{n,t}(110)\right| x^n,\\
H(u,x)&:=\sum_{n, t\ge 0} \left|\LFp{n,t}(110)\right| u^t x^n = \sum_{t\ge0} H_t \,u^t.
\end{align*}
Recall \( C = C(x) \) denotes the generating function for the Catalan numbers \( C_n \), and in this context, \( C_n \) counts the number of labeled \( F \)-paths in \( \LFp{n,0}(110) \) of semilength \( n \), consisting only of north and up steps.
Since no north step is followed by a down step, decomposing labeled $F$-paths in $\cup_{n\ge1}\LFp{n,t}(110)$ induce that

\begin{align} \label{eq:decomp110}
H_t&=(xC)^t H_0,\\
H(u,x)&=\frac{H_0}{1-uxC}\notag.
\end{align}

To find the generating function \( H_0 \), 
let us consider
the last step of \( Q \in \LFp{n,0}(110) \) with \( n \ge 1 \).

\begin{itemize}
  \item If the last step is an up step of the form \( (t+1,1) \), it contributes \( x \cdot H_t \).
  \item If the last step is a pure down step \( (a, a-t) \) for \( 1 \le a \le t \), it contributes \( tx \cdot H_t \). As before, the factor \( t \) accounts for the number of choices for \( a \).
  \item If the last step is a $0$-tailed down step \( (a, a-t) \)  for \( 1 \le a \le t \), labeled as \( (a; a-t, 0, \ldots, 0) \), it contributes
\[
tx \cdot (x + x^2 + x^3 + \cdots) \cdot x^t C^{t+1},
\]
since the last step is
the only down step.
\end{itemize}
Thus, we get
\begin{align*}
H_0-1&=\sum_{t\geq0}xH_t+\sum_{t\geq1}t x H_t+\sum_{t\ge1}t\,(x^2+x^3+x^4+\cdots) x^t C^{t+1}\\
&=xH_0\sum_{t\geq0}(t+1)(xC)^t+\frac{x^2 C}{1-x}\sum_{t\ge1}t (xC)^t \tag{$\because$ \eqref{eq:decomp110}}\\
&=\frac{xH_0}{(1-xC)^2} + \frac{x^3C^2}{(1-x)(1-xC)^2}\\
&=xC^2 H_0 + \frac{x^3 C^4}{1-x},\tag{$\because$ $C=\frac{1}{1-xC}$}
\end{align*}
which yields that
\begin{align*}
H_0&=\frac{1}{1-xC^2}\left(1+\frac{x^3 C^4}{1-x} \right) \\
&= \frac{1-x+x(C-1)^2}{(1-xC^2)(1-x)} \tag{$\because$ $xC^2=C-1$}\\
&= \frac{1-2x}{(1-x)\sqrt{1-4x}}. \tag{$\because$ $\frac{C}{1-xC^2}=\frac{1}{\sqrt{1-4x}}$}
\end{align*}
From $H(u,x)=\frac{H_0}{1-xCu}$, we have
\begin{align*}
[u^t]H(u,x)&=H_0\left(xC\right)^t\\
&=\frac{1-2x}{1-x}\frac{\left(xC\right)^t}{\sqrt{1-4x}}\\
&=\left(1-x-x^2-x^3-\cdots\right)\sum_{n\ge t} \binom{2n-t}{n-t}x^n.
\end{align*}

Since 
\[
\left[x^{n-1}\right]\left(\left[u^t\right] H(u,x)\right)=\left|\LFp{n-1,t}(110)\right|=|\IS_{n,t}(102,110)|,
\]
we have the following proposition.

\begin{prop}
For integers \(n \geq 2\) and \(0 \leq t \leq n-1\), the number of 
\((102,110)\)-avoiding inversion sequences \(e\) of length \( n \) with \(\rank(e) = t\) is given by
$$
\binom{2n-t-2}{n-t-1}-\sum_{i=2}^{n-t}\binom{2n-t-2i}{n-t-i}.
$$
\end{prop}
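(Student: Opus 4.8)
The plan is to combine the bijection of Lemma~\ref{lem:102110} with the generating-function identities already derived immediately above the statement, and then to extract a single coefficient. By Lemma~\ref{lem:102110}, the map $\phi$ restricts to a bijection between $\LFp{n-1,t}(110)$ and $\IS_{n,t}(102,110)$, so it suffices to count labeled $F$-paths. Concretely, I would record that
$$|\IS_{n,t}(102,110)| = |\LFp{n-1,t}(110)| = \left[x^{n-1}\right]\!\left(\left[u^t\right]H(u,x)\right),$$
where $H(u,x)$ is the bivariate generating function built above, so the whole problem reduces to reading off one coefficient of $H$.

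The next step uses the closed forms already computed. From $H(u,x)=H_0/(1-uxC)$ together with the solved value $H_0 = (1-2x)/\bigl((1-x)\sqrt{1-4x}\bigr)$, extracting the coefficient of $u^t$ yields
$$\left[u^t\right]H(u,x) = H_0\,(xC)^t = \frac{1-2x}{1-x}\cdot\frac{(xC)^t}{\sqrt{1-4x}}.$$
I would then factor the two pieces separately: the rational factor as $\tfrac{1-2x}{1-x}=1-(x+x^2+x^3+\cdots)$, and the remaining factor via the standard Catalan identity $\tfrac{(xC)^t}{\sqrt{1-4x}}=\sum_{n\ge t}\binom{2n-t}{n-t}x^n$ (where $C=C(x)$ is the Catalan generating function). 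These are exactly the two series appearing in the displayed expansion preceding the proposition, so no new manipulation is needed.

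Finally, I would extract $[x^{n-1}]$ of the Cauchy product. The leading $1$ contributes $\binom{2(n-1)-t}{(n-1)-t}=\binom{2n-t-2}{n-t-1}$, and the subtracted series $\sum_{j\ge 1}x^j$ contributes $-\sum_{j=1}^{n-1-t}\binom{2(n-1-j)-t}{(n-1-j)-t}$. Reindexing with $i=j+1$, so that $n-1-j=n-i$ and the range $1\le j\le n-1-t$ becomes $2\le i\le n-t$, turns each summand into $\binom{2n-t-2i}{n-t-i}$ and produces precisely
$$\binom{2n-t-2}{n-t-1}-\sum_{i=2}^{n-t}\binom{2n-t-2i}{n-t-i}.$$
Since the genuine combinatorial content (the bijective lemma and the determination of $H_0$) has already been established, the only delicate point here is the bookkeeping in this last reindexing: one must track the upper summation limit carefully and confirm that no term with $n-t-i<0$ is included, so that the binomials remain meaningful; everything else is a routine transcription of the identities derived above.
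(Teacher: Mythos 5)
Your proposal is correct and follows essentially the same route as the paper: it invokes Lemma~\ref{lem:102110}, uses the already-derived closed form $[u^t]H(u,x)=\frac{1-2x}{1-x}\cdot\frac{(xC)^t}{\sqrt{1-4x}}$, and extracts $[x^{n-1}]$, with your reindexing $i=j+1$ correctly reproducing the stated formula. The paper leaves this final coefficient extraction implicit, so your write-up merely makes explicit a step the authors omitted.
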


%%%%%%%%%%%%%%%%

%\section*{Acknowledgements}

%--------------------------------------------------------------------------------
%\nocite{*}
%\bibliographystyle{abbrvnat}
% use the following instead if you encounter problems

%\bibliographystyle{alpha}
%%\bibliographystyle{amsabbrv}
%\bibliography{\jobname}

\end{document}